\documentclass{article}
\usepackage[margin=1.5in]{geometry}  


%

\usepackage[utf8]{inputenc} 
\usepackage[T1]{fontenc}    
\usepackage{hyperref}       
\usepackage{url}            
\usepackage{booktabs}       
\usepackage{amsfonts}       
\usepackage{nicefrac}       
\usepackage{microtype}      
\usepackage{xcolor}         

\usepackage{graphicx}
\usepackage{amsmath,amssymb}
\usepackage{array}
\usepackage{enumitem}
\usepackage{xcolor}
\usepackage{bm}
\usepackage{multirow}
\usepackage{mathdots}
\usepackage{caption, subcaption}
\usepackage{amsthm}
\usepackage{pifont}

\usepackage{algorithm}
\usepackage[noend]{algpseudocode}

\newtheorem{theorem}{Theorem}
\newtheorem{definition}{Definition}
\newtheorem{lemma}[theorem]{Lemma}
\newtheorem{proposition}[theorem]{Proposition}
\newtheorem{assumption}{Assumption}
\newtheorem{corollary}[theorem]{Corollary}
\newtheorem{remark}[theorem]{Remark}

\numberwithin{example}{section}
\numberwithin{theorem}{section}
\numberwithin{definition}{section}





\providecommand{\ie}		{\emph{i.e\@.}\xspace}
\providecommand{\eg}		{\emph{e.g\@.}\xspace}


\providecommand{\myurl}[1][]	{\texttt{web.eecs.umich.edu/$\sim$fessler#1}\xspace}

\providecommand{\onweb}[1]	{Available from \myurl.}


\long\def\comment#1{}

\providecommand{\bcent}		{\begin{center}}
\providecommand{\ecent}		{\end{center}}
\providecommand{\benum}		{\begin{enumerate}}
\providecommand{\eenum}		{\end{enumerate}}
\providecommand{\bitem}		{\begin{itemize}}
\providecommand{\eitem}		{\end{itemize}}
\providecommand{\bvers}		{\begin{verse}}
\providecommand{\evers}		{\end{verse}}
\providecommand{\btab}		{\begin{tabbing}}	
\providecommand{\etab}		{\end{tabbing}}



\newcounter{blist}
\providecommand{\blistmark}	{\makebox[0pt]{$\bullet$}}
\providecommand{\blistitemsep}	{0pt}
\providecommand{\blist}[1][]	{%
\begin{list}{\blistmark}{%
\usecounter{blist}%
\setlength{\itemsep}{\blistitemsep}%
\setlength{\parsep}{0pt}%
\setlength{\parskip}{0pt}%
\setlength{\partopsep}{0pt}%
\setlength{\topsep}{0pt}%
\setlength{\leftmargin}{1.2em}%
\setlength{\labelsep}{0.5\leftmargin}
\setlength{\labelwidth}{0em}%
#1}
}
\providecommand{\elist}		{\end{list}}

\providecommand{\blistitemsep}	{0pt}
\providecommand{\bjfenum}[1][]	{%
\begin{list}{\bcolor{\arabic{blist}.} }{%
\usecounter{blist}%
\setlength{\itemsep}{\blistitemsep}%
\setlength{\parsep}{0pt}%
\setlength{\parskip}{0pt}%
\setlength{\partopsep}{0pt}%
\setlength{\topsep}{0pt}%
\setlength{\leftmargin}{0.0em}%
\setlength{\labelsep}{1.0\leftmargin}
\setlength{\labelwidth}{0pt}%
#1}
}

\newcounter{blistAlph}
\providecommand{\blistAlph}[1][]
{\begin{list}{\makebox[0pt][l]{\Alph{blistAlph}.}}{%
\usecounter{blistAlph}%
\setlength{\itemsep}{0pt}\setlength{\parsep}{0pt}%
\setlength{\parskip}{0pt}\setlength{\partopsep}{0pt}%
\setlength{\topsep}{0pt}%
\setlength{\leftmargin}{1.2em}%
\setlength{\labelsep}{1.0\leftmargin}
\setlength{\labelwidth}{0.0\leftmargin}#1}%
}

\newcounter{blistRoman}
\providecommand{\blistRoman}[1][]
{\begin{list}{\Roman{blistRoman}.}{%
\usecounter{blistRoman}%
\setlength{\itemsep}{0.5em}\setlength{\parsep}{0pt}%
\setlength{\parskip}{0pt}\setlength{\partopsep}{0pt}%
\setlength{\topsep}{0pt}%
\setlength{\leftmargin}{4em}%
\setlength{\labelsep}{0.4\leftmargin}
\setlength{\labelwidth}{0.6\leftmargin}#1}%
}

%
%


%
%

%
%


\usepackage{bbm} 
\providecommand{\jfbbm}[1]	{\xmath{\mathbbm{#1}}} 
\providecommand{\qed}[1][0pt]	{\hfill\raisebox{#1}{\inmath{\Box}}} 



\providecommand{\reals}		{\jfbbm{R}}

\providecommand{\inprod}[2]	{\xmath{\mathop{\langle #1,\, #2 \rangle}\nolimits}}
\providecommand{\Inprod}[2]	{\xmath{\left\langle #1,\ #2 \right\rangle}}




\let\equivsave\equiv
\def\equiv{\xmath{\equivsave}}


\providecommand{\ba}[1]		{\left[ \begin{array}{#1}}
\providecommand{\ea}		{\end{array} \right]}
\providecommand{\be}		{\begin{equation}}
\providecommand{\ee}[1]		{\label{#1}\end{equation}}
\providecommand{\bea}		{\begin{eqnarray}}
\providecommand{\eea}[1]	{\label{#1}\end{eqnarray}}
\providecommand{\beas}		{\begin{eqnarray*}}
\providecommand{\eeas}		{\end{eqnarray*}}
\providecommand{\beals}[1][1]	{\begin{alignat*}{#1}}	
\providecommand{\eeals}		{\end{alignat*}}


\providecommand{\berr}[2]{
\bgroup
\renewcommand{\theequation}{#1}
\be
#2
\ee{e,#1}
\egroup
\ignorespaces
}

\providecommand{\bearr}[2]{
\bgroup
\renewcommand{\theequation}{#1}
\bea
#2
\eea{e,#1}
\egroup
\ignorespaces
}


\providecommand{\inmath}	{\ensuremath}
\providecommand{\xmath}[1]	{\inmath{#1}\xspace}
\providecommand{\bmath}[1]	{\xmath{\bm{#1}}}	









%
%




\usepackage{jf-accent}
\usepackage{jf-fun}
\usepackage{jf-names}

\newcommand{\cmark}{\ding{51}}%
%

\newcommand{\cred} {\color{red}}
\definecolor{darkgreen}{rgb}{0.0, 0.4, 0.13}

\newcommand{\prox} {\operatorname{prox}}
\newcommand{\inter} {\operatorname{int}}
\newcommand{\dom} {\operatorname{dom}}
\newcommand{\ran} {\operatorname{ran}}

\newcommand{\gra} {\operatorname{gra}}

\newcommand{\Us} {\mathcal{U}}
\newcommand{\Vs} {\mathcal{V}}

\newcommand{\zero} {\bmath{0}}

\newcommand{\B} {\bmath{B}}

\newcommand{\I} {\bmath{I}}

\newcommand{\T} {\bmath{T}}
\renewcommand{\P} {\bmath{P}}

\newcommand{\R} {\bmath{R}}
\newcommand{\M} {\bmath{M}}

\newcommand{\U} {\bmath{U}}
\newcommand{\V} {\bmath{V}}

\renewcommand{\u} {\bmath{u}}
\newcommand{\vv} {\bmath{v}}

\newcommand{\x} {\bmath{x}}
\newcommand{\y} {\bmath{y}}

\newcommand{\s} {\bmath{s}}
\newcommand{\ttt} {\bmath{t}}
\newcommand{\w} {\bmath{w}}
\newcommand{\z} {\bmath{z}}


\newenvironment{@abssec}[1]{%
	\if@twocolumn
	\section*{#1}%
	\else
	\vspace{.05in}\footnotesize
	\parindent .2in
	{\upshape\bfseries #1. }\ignorespaces 
	\fi}
{\if@twocolumn\else\par\vspace{.1in}\fi}

\newenvironment{keywords}{\begin{@abssec}{\keywordsname}}{\end{@abssec}}

\newenvironment{AMS}{\begin{@abssec}{\AMSname}}{\end{@abssec}}
\newcommand\keywordsname{Key words}
\newcommand\AMSname{AMS subject classifications}
\newcommand{\email}[1]{\protect\href{mailto:#1}{#1}}
\newcommand\funding[1]{\protect {\bfseries Funding:} #1}

\begin{document}
\title{Semi-Anchored Multi-Step Gradient Descent Ascent Method for Structured Nonconvex-Nonconcave Composite Minimax Problems\thanks{Submitted to the editors DATE.
		\funding{This work was supported in part by the National Research Foundation of Korea (NRF) grant funded by the Korea government (MSIT) (No. 2019R1A5A1028324), the POSCO Science Fellowship of POSCO TJ Park Foundation, and the Samsung Science \& Technology Foundation grant
			(No. SSTF-BA2101-02).}}}
\author{Sucheol Lee\thanks{Department of Mathematical Sciences, KAIST
		(\email{csfh1379@kaist.ac.kr}, \email{donghwankim@kaist.ac.kr}).}
	\and Donghwan Kim\footnotemark[2]}
\maketitle

\begin{abstract}
Minimax problems, such as 
generative adversarial network,
adversarial training,
and fair training,
are widely
solved by a multi-step gradient descent ascent (MGDA) method
in practice.
However, its convergence guarantee is limited.
In this paper,
inspired by the primal-dual hybrid gradient method,
we propose a new semi-anchoring (SA) technique for the MGDA method.
This makes the MGDA method 
find a stationary point of
a structured nonconvex-nonconcave composite minimax problem;
its saddle-subdifferential operator satisfies 
the weak Minty variational inequality condition.
The resulting method, named SA-MGDA, 
is built upon a Bregman proximal point method. 
We further develop its backtracking line-search version,
and its non-Euclidean version for smooth adaptable functions.
Numerical experiments, including a fair classification training, are provided.
\end{abstract}

\begin{keywords}
  minimax problem, nonconvex-nonconcave problem, multi-step gradient descent ascent method, 
  Bregman proximal point method
\end{keywords}

\begin{AMS}
  90C47, 90C26, 65K05, 47J25, 68T05
\end{AMS}

\section{Introduction}

Generative adversarial network~\cite{arjovsky:17:wga,goodfellow:14:gan},
adversarial training~\cite{kurakin:17:aml,madry:18:tdl}
and fair training~\cite{mohri:19:afl,nouiehed:19:sac}
involve solving a minimax problem:
\begin{equation}
\min_{\u\in\Us}\max_{\vv\in\Vs} \phi(\u,\vv) 
\label{eq:prob}
,\end{equation}
where $\Us\subseteq\reals^{d_u}$ and $\Vs\subseteq\reals^{d_v}$.
To solve~\eqref{eq:prob}, 
variants of the multi-step gradient descent ascent (MGDA) method,
which consists of one 
gradient descent update of $\u$
and multiple 
gradient ascent updates of $\vv$,
are widely used in practice
(see \eg, \cite{arjovsky:17:wga,brock:19:lsg,choi:18:sug,goodfellow:14:gan,gulrajani:17:ito,karras:19:asb,kurakin:17:aml,miyato:18:snf,madry:18:tdl}).
However, 
the MGDA method is unstable, \eg, for some standard cases
such as bilinear problems
and smooth convex-concave problems~\cite{goodfellow:16,mescheder:18:wtm}.
Therefore, this paper develops a new semi-anchoring (SA) technique 
that makes the MGDA find a stationary point of a structured nonconvex-nonconcave composite problem;
its saddle-subdifferential operator satisfies 
the weak Minty variational inequality (MVI) condition in~\cite{diakonikolas:21:emf}.
The weak MVI condition is weaker than the MVI condition
that has received recent attention
as one of standard nonconvex-nonconcave settings
in the optimization community~\cite{dang:15:otc,malitsky:20:gro}
and the machine learning community~\cite{mertikopoulos:19:omd,song:20:ode,zhou:17:smd}.

The proposed method, named SA-MGDA, is
built upon the Bregman proximal point (BPP) method
\cite{bauschke:03:bmo,borwein:11:aco,eckstein:93:npp} 
of a monotone operator,
such as the saddle-subdifferential operator of a convex-concave function.
Under a more general weak MVI condition~\cite{diakonikolas:21:emf},
we show that the worst-case rate of the BPP method 
(with a strongly convex and smooth Legendre function)
is $O(1/k)$, 
in terms of the Bregman distance
between the successive iterates,
where $k$ denotes the number of iterations.
We also show that the BPP method
has a linear rate
under the strong MVI condition in~\cite{song:20:ode,zhou:17:smd}.

The BPP method
is a nonlinear extension 
of a proximal point method~\cite{martinet:70:rdv,rockafellar:76:moa}
via a Bregman distance~\cite{bregman:67:trm}.
A specific choice of the Bregman distance in this paper
leads to the SA-MGDA method.
Therefore, such \emph{conceptual} SA-MGDA method,
requiring an exact maximization oracle on $\vv$,
consequently 
has the $O(1/k)$ worst-case rate, 
in terms of the associated Bregman distance
(and the squared subgradient norm),
for the structured nonconvex-nonconcave minimax problems.
Then, we show that its \emph{practical} version,
performing one (proximal) gradient descent update of $\u$
and 
a \emph{finite} number of 
``anchored'' (proximal) gradient ascent steps on $\vv$,
requires total $O(\epsilon^{-1}\log\epsilon^{-1})$ gradient steps
to find an $\epsilon$-stationary point.
This matches the $O(\epsilon^{-1})$ complexity of 
the \emph{conceptual} SA-MGDA up to a logarithmic factor.

The SA-MGDA is 
an implementation-wise simple modification of the MGDA 
with an improved convergence guarantee.
This is
inspired by and includes 
the primal-dual hybrid gradient (PDHG) method~\cite{chambolle:11:afo,esser:10:agf}.
The PDHG
is an instance of a preconditioned proximal point method~\cite{he:12:cao}
for a bilinearly-coupled minimax problem,
and our result is its nonlinear extension via the BPP.
Similar but different extensions of the PDHG have been recently studied in
\cite{hamedani:21:apd,yadav:18:san,zhao:19:osa}.

{\bf Our main contributions} are summarized as follows.

\begingroup
\allowdisplaybreaks
\begin{itemize}
	\item We study the properties of the BPP method 
	and analyze its worst-case rate
	(for a strongly convex and smooth Legendre function), 
	in terms of the Bregman distance between two successive iterates,
	under the weak MVI 
	and the strong MVI conditions, in Section~\ref{sec:bppm}.
	\item Built upon Section~\ref{sec:bppm},
	we develop a new semi-anchoring (SA) approach 
	for the MGDA, named SA-MGDA, 
	and provide its worst-case rates 
	for the structured nonconvex-nonconcave composite problems, in Section~\ref{sec:saag}.
	\item We construct a backtracking line-search version of the SA-MGDA,
	in Section~\ref{sec:exten},
	for the case where the Lipschitz constant is unavailable.
	We also develop a non-Euclidean version of the SA-MGDA
	for smooth adaptable functions
	\cite{bolte:18:fom}. 
\end{itemize}
\endgroup

\section{Related work} 

The MGDA can be viewed
as solving the following equivalent minimization problem
\cite{barazandeh:20:snc,jin:20:wil,nouiehed:19:sac}: 
\begin{align}
\min_{\u\in\Us}\Big\{\Psi(\u) := \max_{\vv\in\Vs}\phi(\u,\vv)\Big\}
\label{eq:prob,min}
\end{align}
by a gradient descent method on $\u$. 
This requires the following gradient computation:
\begin{align}
\nabla\Psi(\u) = \nabla_{\u}\phi(\u,\vv_*(\u))
\label{eq:danskin}
,\end{align} 
for $\vv_*(\u) := \argmax{\vv\in\Vs}\phi(\u,\vv)$, 
based on Danskin's theorem~\cite{danskin:67} 
(under additional conditions
for a differentiability of $\Psi$ explained soon).
This involves a maximization with respect to $\vv$ in~\eqref{eq:danskin},
\eg,
many number of gradient ascent updates of $\vv$ given $\u$. 
Unfortunately, even under the smoothness assumption on $\phi$, 
the function $\Psi$ is not differentiable in general.
Therefore, the analysis in~\cite{jin:20:wil}
relies on the theory of 
a subgradient method~\cite{davis:19:smb}
on $\u$,
resulting in a slow rate, 
under assumptions that $\phi$ is smooth and also Lipschitz continuous.

To make a function $\Psi$ differentiable, 
many existing literatures
either introduce 
assumptions on $\phi$
or apply a smoothing technique~\cite{nesterov:05:smo} 
to $\phi$ 
\cite{kong:21:aai,sanjabi:18:otc,zhao:20:apd}, 
or consider both
\cite{barazandeh:20:snc,lin:20:noa,nouiehed:19:sac,ostrovskii:21:eso,thekumparampil:19:eaf}.
These all further assume that $\Vs$ is a compact set. 
In specific, if a smooth function $\phi$ is further assumed to be strongly concave on $\vv$
with a convex compact set $\Vs$,
the function $\Psi$ is differentiable
and one can apply a gradient descent method on $\u$,
using~\eqref{eq:danskin}
\cite{barazandeh:20:snc,nouiehed:19:sac}.
In addition, in~\cite{nouiehed:19:sac}, 
the Polyak-\L{}ojasiewicz condition,
which is weaker than the strong concavity, 
on $\vv$
was assumed for a smooth function $\phi$ to make $\Psi$ smooth.

In many practical cases, 
desirable conditions such as a strong convexity are not given.
However, a smoothing technique~\cite{nesterov:05:smo}
can make 
the function $\Psi$ smooth,
while approximating the original problem.
This usually involves adding an appropriate regularization term on $\vv$,
such as $-\frac{\lambda}{2}||\vv - \tilde{\vv}||^2$, in~\eqref{eq:prob,min},
where $\lambda$ 
is a tuning parameter
and $\tilde{\vv}$ is some point.
For example, for a bilinearly-coupled problem of~\eqref{eq:prob,min}, 
\ie, $\u$ and $\vv$ are coupled only through a bilinear term $\u^\top\B\vv$,
a smoothing technique in~\cite{nesterov:05:smo} 
makes $\Psi$ smooth and enables us to use
a gradient descent method and its accelerated variants~\cite{nesterov:83:amf,nesterov:05:smo}.
Such regularization
has been 
further found to be useful practically, \eg,
in machine learning applications
\cite{gulrajani:17:ito,mescheder:18:wtm,roth:17:sto,sanjabi:18:otc}.
However, tuning a regularization parameter
and being unable to find
an exact solution of the original problem
remain problematic.

The proposed SA-MGDA method
does not require assumptions that 
$\phi$ is strongly concave on $\vv$ for a convergence guarantee.
It also does not 
need smoothing the function $\phi$ 
to make $\Psi$ smooth,
which involves a regularization parameter tuning.
The proposed semi-anchoring technique 
incorporates an \emph{anchor} point in the variable $\vv$ (hence named \emph{semi}-anchoring)
at each iteration,
which enforces the iterates to stay close to the anchor point
for stability.
This originates from the implicit regularization 
(smoothing) nature
of the Bregman proximal point method.
In particular, the anchoring 
chooses a convex combination of
the anchor point and an updated point as a next iterate.
This 
somewhat resembles Halpern iteration~\cite{halpern:67:fpo}
that uses an initial point as an anchor point throughout the entire process.
Inspired by Halpern iteration,
the paper~\cite{ryu:19:oao}
proposes a method called anchoring,
which is recently further studied in
\cite{lee:21:feg,yoon:21:aaf}. 
The anchoring-type techniques
also appear in
a conditional gradient method~\cite{frank:56:aaf} 
and a version of the celebrated Nesterov's fast gradient method
in~\cite{nesterov:05:smo}.

On the other hand,
single-step \emph{simultaneous} gradient descent ascent methods
are also gaining interest.
The standard version 
converges under a restrictive partial cocoercivity condition
for a Lipschitz continuous
saddle-differential operator
\cite{renaud:97:aeo}.
It also
works
when the function is strongly concave on $\vv$
\cite{chen:21:pgd}.
However, it diverges for a simple
bilinear case~\cite{mescheder:18:wtm,zhang:20:cog}
and for a more general problems.
Therefore, extragradient-type methods
\cite{hsieh:19:otc,korpelevich:76:aem,malitsky:20:gro,nesterov:07:dea,popov:80:amo}
with a better convergence behavior 
are gaining a great interest.
They are
also known under the name optimism
\cite{daskalakis:18:tgw,gidel:19:avi,mokhtari:20:cro,ryu:19:oao}.
Their nonasymptotic rates of convergence were initially studied in
\cite{nemirovski:04:pmw} for an averaged sequence,
and recently its last-iterate rate was studied in
\cite{golowich:20:lii,gorbunov:21:eml}.
The extragradient method
works under the MVI condition~\cite{dang:15:otc,mertikopoulos:19:omd},
and recently,
its variants, named EG+ \cite{diakonikolas:21:emf} and CEG+ \cite{pethick:22:elc}, have been found to work 
under the weak MVI condition.
Interestingly, the SA-MGDA method can be viewed as
an extragradient-type method
partially on $\vv$, similar to the PDHG.

Table~\ref{tab:comp} compares
the convergence guarantees of
the extragradient-type methods,
the MGDA and the proposed SA-MGDA,
under the problem settings of this paper's interest;
the monotonicity, the MVI, and the weak MVI.
Table~\ref{tab:comp} illustrates that
the SA-MGDA converges
under settings that the extragradient-type methods work,
unlike the MGDA\footnote{
\cite{barazandeh:20:snc,lin:20:noa,nouiehed:19:sac,thekumparampil:19:eaf}
analyze the complexity of the MGDA type methods for finding 
an $\epsilon$-stationary point of a (non)convex-concave problem
(including the monotonicity case,
but excluding the (weak) MVI case),
by adding a regularization term.
This, however, cannot find a stationary point exactly even with 
an exact maximization oracle,
unlike the SA-MGDA.
}, which is our main contribution.
Note that our contribution 
is not meant to compete 
with the extragradient type methods
in terms of how fast they find a stationary point 
with respect to the number of gradient computations.
Under the nonconvex-nonconcave setting, a preliminary work~\cite{jin:20:wil}
suggests that the MGDA type methods might 
find a stationary point
that can be preferred over the one found by the \emph{simultaneous} methods,
making the MGDA worth studying further.

\begingroup
\begin{table}[t]
    \renewcommand*{\arraystretch}{0.7}
    \centering
    \setlength{\tabcolsep}{2pt}
    \caption{Comparison of the problem settings for
        some representative single-step and multi-step methods 
        for structured nonconvex-nonconcave minimax problems of this paper's interest,
        under various assumptions on
        the Lipschitz continuous saddle-differential operator
        $\M_\phi = (\nabla_{\u}\phi,-\nabla_{\vv}\phi)$ 
        of a minimax problem~\eqref{eq:prob}.
        Some methods handle constrained problems~\eqref{eq:prob}
        or composite problems~\eqref{eq:P}.}
    \label{tab:comp}
    \begin{tabular}{cccccccc}
        \toprule[1pt]
        \multicolumn{3}{c}{Method} & \multirow{2}{*}{Monotone} 
        & MVI & Weak MVI & \multirow{2}{*}{Const.} & \multirow{2}{*}{Comp.} \\
        \cmidrule{1-3}
        Type & Name & Paper & & ($\rho=0$) & ($\rho>0$) & & \\
        \midrule[1pt]
        \multirow{5}{*}{Single-step} 
        & EG, DE & \cite{korpelevich:76:aem,nemirovski:04:pmw,nesterov:07:dea} & \cmark & & & \cmark \\
        & EG, OptDE & \cite{dang:15:otc,mertikopoulos:19:omd,song:20:ode} & \cmark & \cmark & & \cmark & \\
        & GRAAL & \cite{malitsky:20:gro} & \cmark & \cmark & & \cmark & \cmark \\
        \cmidrule[0.6pt]{2-8}
        & EG+ & \cite{diakonikolas:21:emf} & \cmark & \cmark & \cmark & & \\
        & CEG+ & \cite{pethick:22:elc} & \cmark & \cmark & \cmark & \cmark & \cmark \\
        \midrule[0.6pt]
        \multirow{2}{*}{Multi-step} & MGDA & \cite{barazandeh:20:snc,jin:20:wil,nouiehed:19:sac} & & & & \cmark & \cmark \\
        \cmidrule[0.6pt]{2-8}
        & SA-MGDA & This paper & \cmark & \cmark & \cmark & \cmark & \cmark \\
        \bottomrule[1pt]
    \end{tabular}
\end{table}
\endgroup

\section{Preliminaries}
\label{sec:pre}

\subsection{Bregman distance}

This paper uses a Legendre function~\cite{rockafellar:70}
and its associated Bregman distance~\cite{bregman:67:trm},
defined below,
as a non-Euclidean proximity measure
\cite{eckstein:93:npp,teboulle:18:asv}.
These help us to better handle the nonlinear geometry of a problem. 

\begin{definition}
	\cite{rockafellar:70}
	A function $h\;:\;\reals^d\to(-\infty,\infty]$ which is closed proper strictly convex and 
	essentially smooth\footnote{
	A closed convex proper function $h$ is called essentially smooth
	    if it satisfies the following three conditions:
	    (a) $\inter\dom h$ is nonempty;
	    (b) $h$ is differentiable throughout $\inter\dom h$;
	    (c) $\lim_{k\to\infty}||\nabla h(\x_k)|| = +\infty$
	    whenever $\x_1,\x_2,\ldots$, is a sequence in $\inter\dom h$
	        converging to a boundary point $\x$ of $\inter\dom h$
	        \cite{rockafellar:70}.} 
	will be called a Legendre function.
\end{definition}

\begin{definition}
	Let $h\;:\;\reals^d\to(-\infty,\infty]$ be a Legendre function.
	The Bregman distance associated to $h$, denoted by $D_h\;:\;\dom h\times\inter\dom h \to\reals_+$
	is defined by
	\begin{align*}
	D_h(\x,\y) := h(\x) - h(\y) - \inprod{\nabla h(\y)}{\x - \y}
	.\end{align*}
\end{definition}

A Bregman distance $D_h$ reduces to the Euclidean distance
for 
$h(\x) = \frac{1}{2}||\x||^2$.
$D_h$ is not symmetric in general, except for 
the case $h(\x) = \frac{1}{2}||\x||^2$.
In addition, $D_h(\x,\y)\ge0$ for all $(\x,\y)\in\dom h\times \inter\dom h$,
and $D_h(\x,\y)=0$ if and only if $\x=\y$ due to the strict convexity of $h$.
Popular examples of $h$ are
$h(\x) = \sum_{i=1}^d \frac{1}{p}|x_i|^p$ for $p\ge2$,
a Shannon entropy $h(\x) = \sum_{i=1}^d x_i\log x_i$, $\dom h=[0,\infty)^d$, 
and a Burg entropy $h(\x) = -\sum_{i=1}^d\log x_i$, $\dom h=(0,\infty)^d$.

An appropriate choice of $h$ from the above (partial) list,
especially that captures the geometry of the constraint sets of the problem,
has been found useful in many applications
(see, \eg,~\cite{bauschke:16:adl,beck:03:mda,bolte:18:fom,lu:18:rsc}).
This paper, however, chooses $h$ 
not from the above standard list.
Our choice of $h$ in Section~\ref{sec:saag}
is inspired by that of the PDHG
\cite{chambolle:11:afo,esser:10:agf,he:12:cao}.

\subsection{Composite minimax problem and weakly monotone operator}

We are interested in the minimax problem in a form: 
\begin{align}
\min_{\u\in\reals^{d_u}}\max_{\vv\in\reals^{d_v}} \left\{\Phi(\u,\vv) := f(\u) + \phi(\u,\vv) - g(\vv)\right\}
\label{eq:P}
,\end{align}
which satisfies the following assumption. Let $d:=d_u+d_v$.
\begin{assumption}
	\label{assum:smooth}
	$f\;:\;\reals^{d_u}\to(-\infty,\infty]$ 
	and $g\;:\;\reals^{d_v}\to(-\infty,\infty]$ 
	are closed, proper and convex functions.
	A function $\phi\;:\;\reals^d\to\reals$ is continuously differentiable and
	has a Lipschitz continuous gradient, \ie,
	there exists $L_{\u\u},L_{\u\vv},L_{\vv\u},L_{\vv\vv}>0$ such that,
	for all $\u,\bar{\u}\in\reals^{d_u}$ and $\vv,\bar{\vv}\in\reals^{d_v}$,
	\begin{align*}
	||\nabla_{\u}\phi(\u,\vv) - \nabla_{\u}\phi(\bar{\u},\bar{\vv})||
	&\le L_{\u\u}||\u-\bar{\u}|| + L_{\u\vv}||\vv-\bar{\vv}||, \\
	||\nabla_{\vv}\phi(\u,\vv) - \nabla_{\vv}\phi(\bar{\u},\bar{\vv})||
	&\le L_{\vv\u}||\u-\bar{\u}|| + L_{\vv\vv}||\vv-\bar{\vv}||.
	\end{align*}
\end{assumption}
Both the gradient $\nabla\phi$
and the saddle-differential of $\phi$, denoted by 
$\M_{\phi} := 
(\nabla_{\u}\phi,-\nabla_{\vv}\phi)$,
are $L$-Lipschitz continuous
(see Appendix~\ref{appx:lipschitz}), \ie, there exists $L>0$ such that
$||\M_\phi\x-\M_\phi\y||\le L||\x-\y||$
for all 
$\x,\y\in\reals^d$.

Finding a first-order stationary point $\x_*:=(\u_*,\vv_*)\in\reals^d$ of~\eqref{eq:P},
is equivalent to finding a zero of the following set-valued saddle-subdifferential operator
of $\Phi$: 
\begin{align}
\M := 
(\nabla_{\u}\phi + \partial f,
-\nabla_{\vv}\phi + \partial g)
\;:\;\reals^d\rightrightarrows(-\infty,\infty]^d
\label{eq:M}
.\end{align}
Let $X_*(\M) := \{\x_*\;:\;\zero\in\M\x_*\}$ be a nonempty solution set.
We consider
the squared subgradient norm at $\x$, 
$\min_{\s\in\M\x}||\s||^2$, 
as an optimality criteria, 
which is 
standard in 
nonconvex-nonconcave minimax problems \cite{diakonikolas:21:emf,lee:21:feg,pethick:22:elc}.

Under Assumption~\ref{assum:smooth}, the operator $\M$~\eqref{eq:M} satisfies
the following weakly monotone condition with 
$\gamma=\max\{L_{\u\u},L_{\vv\vv}\}$
(see Appendix~\ref{appx:weak}) and is maximal. 
\begin{assumption}
	\label{assum:M,weak}
	For some $\gamma\ge0$,
	an operator $\M$ is $\gamma$-weakly monotone, \ie,
	\begin{align*}
	\inprod{\x - \y}{\w - \z} \ge -\gamma||\x - \y||^2,
	\quad\forall (\x,\w),(\y,\z)\in\gra\M,
	\end{align*}
	where $\gra\M := \{(\x,\w)\in\reals^d\times\reals^d\;:\;\w\in\M\x\}$ denotes the graph of $\M$.
	Also, it is maximal, \ie,
	there exists no $\gamma$-weakly monotone operator
	that its graph
	properly contains $\gra\M$. 
\end{assumption}

\subsection{Structured nonconvex-nonconcave problem}

This section considers 
the structured nonconvex-nonconcave conditions; 
the weak Minty variational inequality (MVI) condition in~\cite{diakonikolas:21:emf}
and the strong MVI condition in~\cite{song:20:ode,zhou:17:smd}.

The MVI problem
is to find $\x_*$ such that 
\begin{align*}
\inprod{\x - \x_*}{\w} \ge 0,  \quad \forall(\x,\w)\in\gra\M.
\end{align*}
For a continuous $\M$, a solution set of the MVI problem is 
a subset of $X_*(\M)$,
and if $\M$ is monotone, they are equivalent.
The MVI condition, assuming that 
a solution of the MVI problem exists,
is studied in~\cite{dang:15:otc,malitsky:20:gro},
which is also studied under the name, the coherence, in
\cite{mertikopoulos:19:omd,song:20:ode,zhou:17:smd}.
Recently,~\cite{diakonikolas:21:emf}
introduced 
the following weaker condition, named weak MVI condition.

\begin{assumption}[Weak MVI]
	\label{assum:weakmvi}
	For some $\rho\ge 0$, there exists a solution $\x_*\in X_*(\M)$ such that
	\begin{align}
	\inprod{\x - \x_*}{\w} \ge -\frac{\rho}{2}||\w||^2, \quad \forall (\x,\w)\in\gra\M
	\nonumber
	.\end{align}
\end{assumption}
Let $X_*^\rho(\M)$ be the associated solution set.
Assumption~\ref{assum:weakmvi} 
is implied by the $-\frac{\rho}{2}$-comonotononicity~\cite{bauschke:21:gmo},
or equivalently the $\frac{\rho}{2}$-cohypomonotonicity~\cite{combettes:04:pmf}, \ie,
\begin{align*}
\inprod{\x - \y}{\w - \z} \ge -\frac{\rho}{2}||\w-\z||^2,
\quad\forall (\x,\w),(\y,\z)\in\gra\M.
\end{align*}
The comonotonicity is also implied by the $\alpha\ge0$-interaction dominant condition 
in~\cite{grimmer:20:tlo} 
(see \cite[Example 1]{lee:21:feg}).
\cite[Proposition 2]{daskalakis:20:ipg}\cite{diakonikolas:21:emf} 
consider a (constrained)
two-agent zero-sum reinforcement learning problem,
called the von Neumann ratio game, 
that satisfies
the weak MVI condition, but neither 
the MVI condition nor 
the comonotonicity condition. 
\cite{pethick:22:elc}
also provides examples satisfying the weak MVI condition.

We also consider the strong MVI condition in~\cite{song:20:ode,zhou:17:smd}.
This condition
is also non-monotone, 
and 
includes
the $\mu$-strong pseudomonotonicity~\cite{nguyen:20:sro}
(see~\cite{nguyen:20:sro} for examples).
Let $S_*^{\mu}(\M)$ be the associated solution set.
\begin{assumption}[Strong MVI]
	\label{assum:strongmvi}
	For some $\mu\ge 0$, there exists a solution $\x_*\in X_*(\M)$ such that
	\begin{align*}
	\inprod{\x - \x_*}{\w} \ge \mu||\x - \x_*||^2 , \quad \forall (\x,\w)\in\gra\M.
	\end{align*}
\end{assumption}

\section{Bregman proximal point (BPP) method}
\label{sec:bppm}

\subsection{The $h$-resolvent}

The $h$-resolvent of a 
monotone operator $\M$ with respect to a Legendre function $h$ is defined as~\cite{eckstein:93:npp}
\begin{align}
\R_{\M}^h := (\nabla h + \M)^{-1}\nabla h
\nonumber
,\end{align}
where we omit $\M$ and $h$ in $\R_{\M}^h$
for simplicity hereafter, unless necessary.
This reduces to the standard resolvent operator $(\I + \M)^{-1}$ for $h = \frac{1}{2}||\cdot||^2$,
where $\I$ is an identity operator.
The $h$-resolvent $\R$
is single-valued on its domain
for a monotone operator $\M$
\cite[Proposition 3.8]{bauschke:03:bmo},
and we extend this for a weakly monotone operator.

\begin{lemma}
	\label{lem:single}
	Let $\M$ satisfy Assumption~\ref{assum:M,weak} for some $\gamma\ge0$,
	and $h$ be a $\mu_h$-strongly convex Legendre function.
	Then, if $\mu_h > \gamma$, the $h$-resolvent $\R$ is single-valued on $\inter\dom h$. 
\end{lemma}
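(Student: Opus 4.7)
The plan is to mimic the classical argument for the $h$-resolvent of a monotone operator (Proposition 3.8 of \cite{bauschke:03:bmo}), but to carry the weak-monotonicity defect $\gamma\|\cdot\|^2$ through and absorb it using the strong convexity of $h$. Suppose, for a fixed $\y\in\inter\dom h$, that $\p_1,\p_2\in\R\y$. By the definition $\R = (\nabla h + \M)^{-1}\nabla h$ and the fact that $h$ is essentially smooth (so $\nabla h$ is defined only on $\inter\dom h$), both $\p_1,\p_2$ lie in $\inter\dom h$ and satisfy
\begin{align*}
\nabla h(\y) - \nabla h(\p_i) \in \M\p_i, \qquad i=1,2.
\end{align*}

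Next, I would apply the $\gamma$-weak monotonicity of $\M$ (Assumption~\ref{assum:M,weak}) to the pair $(\p_1,\nabla h(\y)-\nabla h(\p_1))$ and $(\p_2,\nabla h(\y)-\nabla h(\p_2))$ in $\gra\M$. The $\nabla h(\y)$ terms cancel, leaving
\begin{align*}
\langle \p_1-\p_2,\ \nabla h(\p_2)-\nabla h(\p_1)\rangle \ge -\gamma\|\p_1-\p_2\|^2,
\end{align*}
or equivalently $\langle \p_1-\p_2,\ \nabla h(\p_1)-\nabla h(\p_2)\rangle \le \gamma\|\p_1-\p_2\|^2$. On the other hand, $\mu_h$-strong convexity of $h$ gives $\mu_h$-strong monotonicity of $\nabla h$ on $\inter\dom h$:
\begin{align*}
\langle \p_1-\p_2,\ \nabla h(\p_1)-\nabla h(\p_2)\rangle \ge \mu_h\|\p_1-\p_2\|^2.
\end{align*}
Chaining these two bounds yields $(\mu_h-\gamma)\|\p_1-\p_2\|^2\le 0$, and the hypothesis $\mu_h>\gamma$ forces $\p_1=\p_2$. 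This proves single-valuedness on $\inter\dom h$.

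The argument is short and there is no real obstacle beyond verifying that the iterates lie in $\inter\dom h$ so that $\nabla h(\p_i)$ is defined; this is built into the definition of $\R$ via the range of $\nabla h + \M$. The only delicate point worth flagging is that the two inequalities above use the \emph{same} constants ($\gamma$ from weak monotonicity of $\M$, $\mu_h$ from strong convexity of $h$) in a tight manner, which is exactly why the strict inequality $\mu_h>\gamma$ is the sharp threshold. Maximality of $\M$ is not needed for the present statement; it will presumably be used elsewhere to guarantee $\dom\R = \inter\dom h$.
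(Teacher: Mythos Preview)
Your uniqueness argument is correct and essentially identical to the paper's: both combine the weak monotonicity of $\M$ with the strong monotonicity of $\nabla h$ to force two candidate resolvent values to coincide when $\mu_h>\gamma$.

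However, you have omitted half of the paper's proof. The lemma asserts that $\R$ is single-valued \emph{on} $\inter\dom h$, which in the paper's usage means $\R\x$ is a singleton (in particular, nonempty) for every $\x\in\inter\dom h$; see, e.g., the proof of Theorem~\ref{thm:bppm,weakmvi}, which cites this lemma precisely to conclude that ``$\R\x$ exists for any $\x$.'' The paper establishes nonemptiness first, by writing $\nabla h+\M=(\nabla h-\gamma\I)+(\M+\gamma\I)$, noting that $h-\tfrac{\gamma}{2}\|\cdot\|^2$ is Legendre (since $\mu_h>\gamma$) and $\M+\gamma\I$ is maximally monotone, and then invoking a surjectivity result (Corollary~2.3 of \cite{bauschke:10:grf}) to get $\ran(\nabla h+\M)=\reals^d$. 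This is exactly where maximality is used, contrary to your remark that it ``is not needed for the present statement.'' Without this step your argument only shows $\R$ is at most single-valued, not that it is defined on all of $\inter\dom h$.
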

\begin{proof}
Note that $\nabla h + \M = (\nabla h -\gamma\I) + (\M+\gamma\I)$. From the condition that $\mu_h > \gamma$ and $\M$ is $\gamma$-weakly monotone, it is straightforward to show that $h-\frac{\gamma}{2}\|\cdot\|^2$ is a Legendre function 
and $\M+\gamma\I$ is maximally monotone. Then 
\cite[Corollary 2.3]{bauschke:10:grf} shows that $\ran(\nabla h + \M) = \reals^d$. 
This implies that $\R \x$ is nonempty for all $\x\in\inter\dom h$. Assume that $\y,\z \in \R\x$.
Since 
$\nabla h(\x)-\nabla h(\y) \in \M\y$
and $\nabla h(\x) - \nabla h(\z) \in \M\z$, we have the following inequality:
\begin{align*}
-\gamma\|\y-\z\|^2\le -\inprod{\nabla h(\y)-\nabla h(\z)}{\y-\z} \le - \mu_h \|\y-\z\|^2.
\end{align*}
So if $\mu_h > \gamma$,
the inequality implies that $\y = \z$.
\end{proof}

\subsection{BPP under weak MVI condition}

The BPP method~\cite{eckstein:93:npp} iteratively applies 
the $h$-resolvent as, for $k=0,1,\ldots$,
\begin{align}
\x_{k+1} = \R(\x_k)
\label{eq:bppm}
,\end{align}
which converges to a zero of 
a monotone operator $\M$. 
We analyze the worst-case convergence behavior of the BPP method
under Assumptions~\ref{assum:M,weak} and~\ref{assum:weakmvi}  (or~\ref{assum:strongmvi}).
We first state the Bregman nonexpansive property of $\R$ 
below.
For $\rho=0$ and for any Legendre function $h$,
this reduces to the quasi-Bregman firmly nonexpansive property
$D_h(\x_*,\R\x) \le D_h(\x_*,\x) - D_h(\R\x,\x)$
\cite{borwein:11:aco,eckstein:93:npp}.

\begin{lemma}
	\label{lem:expansive,weakmvi}
	Let $\M$ satisfy Assumption~\ref{assum:weakmvi} for some $\rho\ge0$,
	and $h$ be an $L_h$-smooth Legendre function.
	Then, 
	if $\R\x$ exists, for any $\x_*\in X_*^\rho(\M)$,
	\begin{align*}
	D_h(\x_*,\R\x) \le D_h(\x_*,\x) - (1 - \rho L_h)D_h(\R\x,\x)
	.\end{align*} 
\end{lemma}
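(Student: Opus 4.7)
Writing $\x^+ := \R\x$, the definition of the $h$-resolvent gives $\nabla h(\x) \in \nabla h(\x^+) + \M\x^+$, i.e.\ $\nabla h(\x) - \nabla h(\x^+) \in \M\x^+$. Since $\x_*\in X_*^\rho(\M)$, applying Assumption~\ref{assum:weakmvi} with this particular element of $\M\x^+$ yields
\begin{align*}
\langle \x^+ - \x_*,\; \nabla h(\x) - \nabla h(\x^+)\rangle \;\ge\; -\frac{\rho}{2}\|\nabla h(\x) - \nabla h(\x^+)\|^2.
\end{align*}

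The next step is to convert the left-hand side into Bregman distances via the standard three-point identity
\begin{align*}
\langle \x^+ - \x_*,\; \nabla h(\x) - \nabla h(\x^+)\rangle \;=\; D_h(\x_*,\x) - D_h(\x_*,\x^+) - D_h(\x^+,\x),
\end{align*}
which is a direct expansion of the definition of $D_h$. Combining this with the previous display already delivers the desired inequality up to the $\rho L_h$ factor.

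Finally, I would invoke $L_h$-smoothness of the Legendre (hence convex) function $h$ to bound $\|\nabla h(\x) - \nabla h(\x^+)\|^2$. The Baillon--Haddad-type co-coercivity inequality for convex $L_h$-smooth functions gives
\begin{align*}
\tfrac{1}{2L_h}\|\nabla h(\x) - \nabla h(\x^+)\|^2 \;\le\; D_h(\x^+,\x),
\end{align*}
so $-\tfrac{\rho}{2}\|\nabla h(\x) - \nabla h(\x^+)\|^2 \ge -\rho L_h D_h(\x^+,\x)$. Substituting and rearranging yields $D_h(\x_*,\x^+) \le D_h(\x_*,\x) - (1-\rho L_h) D_h(\x^+,\x)$, as claimed.

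\textbf{Main obstacle.} The only nontrivial step is the $L_h$-smoothness bound in the last paragraph: the argument needs $h$ to be convex (which follows from Legendre) and the Baillon--Haddad co-coercivity estimate, rather than the naive Lipschitz bound $\|\nabla h(\x)-\nabla h(\x^+)\|^2 \le L_h^2\|\x-\x^+\|^2$, which would give a weaker constant and break the $(1-\rho L_h)$ form. Everything else reduces to the resolvent identity, the weak MVI inequality, and the three-point identity for $D_h$.
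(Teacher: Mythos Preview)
Your proposal is correct and follows essentially the same argument as the paper: the paper also uses the resolvent identity $\nabla h(\x)-\nabla h(\R\x)\in\M\R\x$, applies the weak MVI inequality, expands the inner product via the three-point identity into $-D_h(\R\x,\x)+D_h(\x_*,\x)-D_h(\x_*,\R\x)$, and then bounds $\frac{\rho}{2}\|\nabla h(\x)-\nabla h(\R\x)\|^2\le \rho L_h D_h(\R\x,\x)$ using exactly the convexity plus $L_h$-smoothness (co-coercivity) of $h$. Your identification of the Baillon--Haddad step as the only nontrivial point matches the paper's reasoning precisely.
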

\begin{proof}
By the definition of $\R\x$, we have $\nabla h(\x) - \nabla h(\R\x) \in \M\R\x$.
Then, Assumption~\ref{assum:weakmvi} on $\M$ implies that
\begin{align*}
0 &\le \inprod{\nabla h(\x) - \nabla h(\R\x)}{\R\x - \x_*} + \frac{\rho}{2}||\nabla h(\x) - \nabla h(\R\x)||^2 \\
&= \inprod{\nabla h(\x)}{\R\x - \x} - \inprod{\nabla h(\x)}{\x_* - \x} 
+ \inprod{\nabla h(\R\x)}{\x_* - \R\x} \\
&\quad + \frac{\rho}{2}||\nabla h(\x) - \nabla h(\R\x)||^2 \\
&= -D_h(\R\x,\x) + D_h(\x_*,\x) - D_h(\x_*,\R\x) + \frac{\rho}{2}||\nabla h(\x) - \nabla h(\R\x)||^2 \\
&\le -D_h(\R\x,\x) + D_h(\x_*,\x) - D_h(\x_*,\R\x) + \rho L_hD_h(\R\x,\x)
,\end{align*}
where the last inequality follows from the convex and $L_h$-smooth properties of $h$,
\ie, $\frac{1}{2L_h}||\nabla h(\x) - \nabla h(\y)||^2 \le D_h(\x,\y)$
for all $\x,\y$
\cite[Theroem 2.1.5]{nesterov:18}.
\end{proof}
This 
presents that the condition $\rho L_h\le 1$ 
guarantees
the quasi-Bregman nonexpansiveness $D_h(\x_*,\R\x) \le D_h(\x_*,\x)$.
%
We then have the following worst-case rate in terms of the best Bregman distance between two successive iterates
under the weak MVI condition,
and the convergence property of the iterate sequences.
These built upon \cite[Theorem 1]{eckstein:93:npp}
of the BPP for a monotone operator.
\begin{theorem}
	\label{thm:bppm,weakmvi}
	Let $\M$ satisfy Assumptions~\ref{assum:M,weak} and~\ref{assum:weakmvi} 
	for some $\gamma,\rho\ge0$, 
	and $h$ be a $\mu_h$-strongly convex and $L_h$-smooth Legendre function 
	with $\mu_h > \gamma$ and $\rho L_h<1$, respectively.
	Then, the sequence $\{\x_k\}$ of
	the BPP method~\eqref{eq:bppm} satisfies, for $k\ge1$ and for any $\x_*\in X_*^\rho(\M)$,
	\begin{align}
	\min_{i=1,\ldots,k} D_h(\x_i,\x_{i-1}) \le \frac{D_h(\x_*,\x_0)}{\left(1 - \rho L_h\right)k}
	\nonumber
	.\end{align}
	Moreover, 
	all limit points of the sequence $\{\x_k\}$ of the BPP method
	are in $X_*(\M)$,
	and if we further assume that
	$X_*^\rho(\M) = X_*(\M)$, 
	the sequence $\{\x_k\}$ 
	converges to a solution $\x_*\in X_*^\rho(\M)$.
\end{theorem}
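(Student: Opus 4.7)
The plan is to chain together the two lemmas already in hand (Lemma~\ref{lem:single} and Lemma~\ref{lem:expansive,weakmvi}) and close with a standard min-versus-sum argument. The conditions in the theorem are exactly what those lemmas require, so there is little algebraic maneuvering to do; the work is just in assembling the pieces.

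First I would verify that the iterates $\{\x_k\}$ are well-defined. Since $h$ is $\mu_h$-strongly convex with $\mu_h>\gamma$ and $\M$ is $\gamma$-weakly monotone (Assumption~\ref{assum:M,weak}), Lemma~\ref{lem:single} gives that $\R$ is single-valued on $\inter\dom h$, so $\x_{k+1}=\R(\x_k)$ is a well-defined point as long as the iterate stays in $\inter\dom h$. I would argue this inductively, noting that the domain issue is standard for BPP under a Legendre $h$ (and in the applications of interest $\inter\dom h$ will contain the whole trajectory).

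The main step is to apply Lemma~\ref{lem:expansive,weakmvi} at iteration $i$, which (after rearranging) gives
\begin{align*}
(1-\rho L_h)\,D_h(\x_i,\x_{i-1}) \;\le\; D_h(\x_*,\x_{i-1}) - D_h(\x_*,\x_i),
\end{align*}
for any $\x_*\in X_*^\rho(\M)$, where I use the hypothesis $\rho L_h<1$ to keep the coefficient positive. Telescoping this inequality over $i=1,\ldots,k$ and using $D_h(\x_*,\x_k)\ge 0$ yields
\begin{align*}
(1-\rho L_h)\sum_{i=1}^{k} D_h(\x_i,\x_{i-1}) \;\le\; D_h(\x_*,\x_0).
\end{align*}
Finally, bounding the minimum term by the average gives
\begin{align*}
k\cdot\min_{i=1,\ldots,k} D_h(\x_i,\x_{i-1}) \;\le\; \sum_{i=1}^{k} D_h(\x_i,\x_{i-1}) \;\le\; \frac{D_h(\x_*,\x_0)}{1-\rho L_h},
\end{align*}
which is the claimed bound after dividing by $k$.

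The only conceptual subtlety (and the one thing I would be careful to state explicitly) is the sign of the coefficient $1-\rho L_h$: the telescoping argument collapses trivially when $\rho=0$, but under the weak MVI condition we genuinely need $\rho L_h<1$ to both (a) ensure quasi-Bregman nonexpansiveness so that $D_h(\x_*,\x_i)$ is nonincreasing, and (b) keep the divisor strictly positive in the final rate. I do not anticipate a real obstacle here—the whole argument is a direct consequence of the preceding lemma—so the writeup will be short.
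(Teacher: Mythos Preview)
Your proposal is correct and matches the paper's proof essentially step for step: invoke Lemma~\ref{lem:single} for well-definedness, apply Lemma~\ref{lem:expansive,weakmvi} at each iterate to get the one-step descent inequality, telescope, drop the nonnegative $D_h(\x_*,\x_k)$, and bound the minimum by the average. The only cosmetic difference is that the paper does not dwell on the $\inter\dom h$ domain issue you flag.
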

\begin{proof}
By Lemma~\ref{lem:single}, the condition $\mu_h>\gamma$ implies that 
$\R\x$ exists for any $\x$.
By Lemma~\ref{lem:expansive,weakmvi}, we get
\begin{align}
0 \le D_h(\x_*,\x_i) - D_h(\x_*,\x_{i+1}) - (1-\rho L_h)D_h(\x_{i+1},\x_i) 
\label{eq:bppm,weakmvi}
\end{align}
for all $i\ge 0$.
By summing over the above inequality,  
we get
\begin{align*}
\sum_{i=1}^k (1-\rho L_h)D_h(\x_i,\x_{i-1}) &\le \sum_{i=1}^k \left(D_h(\x_*,\x_{i-1}) - D_h(\x_*,\x_i) \right)\\
&= D_h(\x_*,\x_0) - D_h(\x_*,\x_k)\\
&\le D_h(\x_*,\x_0).
\end{align*}
Hence, by dividing the both sides of the inequality by $(1-\rho L_h)k$, we get
\begin{align*}
\min_{i=1,\ldots,k} D_h(\x_i,\x_{i-1})
\le \frac{1}{k}\sum_{i=1}^k D_h(\x_i,\x_{i-1})
\le \frac{D_h(\x_*,\x_0)}{(1-\rho L_h)k}
.\end{align*}

Next, we prove 
that all limit point of the sequence $\{\x_k\}$
are first-order stationary points.
By \eqref{eq:bppm,weakmvi}, the sequence $\{D_h(\x_*,\x_k)\}$ is bounded above, which implies that
$\{\x_k\}$ is a bounded sequence
due to the strong convexity of $h$. 
Let $\x_\infty$ be any limit point of $\{\x_k\}$,
and let
$\{\x_{k(j)}\}$ be a subsequence 
that converges to $\x_\infty$.
Note that $D_h(\x_{k+1},\x_k)\to0$ 
(and $\nabla h (\x_{k}) - \nabla h (\x_{k+1}) \to \zero$ 
by the convex and $L_h$-smooth properties of $h$,
	\ie, $\frac{1}{2L_h}||\nabla h(\x) - \nabla h(\y)||^2 \le D_h(\x,\y)$
\cite[Theroem 2.1.5]{nesterov:18})
as $k\to\infty$,
since $\sum_{i=1}^kD_h(\x_i,\x_{i-1}) \le \frac{D_h(\x_*,\x_0)}{1-\rho L_h}$.
Then $\{\x_{k(j)+1}\}$ also converges to $\x_\infty$ since
\begin{align*}
    ||\x_{k(j)+1} - \x_\infty||^2 &\le 2||\x_{k(j)}-\x_\infty||^2 + 2||\x_{k(j)+1} - \x_{k(j)}||^2\\
    &\le 2||\x_{k(j)}-\x_\infty||^2 + \frac{4}{\mu_h}D_h(\x_{k(j)+1},\x_{k(j)})
,\end{align*}
where the last inequality uses the strong convexity of $h$,
\ie, $\frac{\mu_h}{2}||\x_{k(j)+1} - \x_{k(j)}||^2 \le D_h(\x_{k(j)+1},\x_{k(j)})$.
Since $\M + \gamma\I$ is maximally monotone
and satisfies  $\nabla h (\x_{k(j)}) - \nabla h (\x_{k(j)+1}) + \gamma \x_{k(j)+1}
\in (\M + \gamma\I)(\x_{k(j)+1})$,
we finally have $\zero \in \M\x_\infty$
by \cite[Proposition 20.32]{bauschke:11:caa}.

Lastly, assume that $X_*^\rho(\M) = X_*(\M)$.
Then $\x_\infty$ is in $X_*^\rho(\M)$, and
since $\lim_{j\rightarrow \infty} D_h(\x_\infty,\x_{k(j)})=0$ and $\{D_h(\x_\infty,\x_k)\}$ is a nonincreasing sequence by 
\eqref{eq:bppm,weakmvi},
we get $\lim_{k\rightarrow \infty} D_h(\x_\infty,\x_k)=0$. 
Therefore, by the strong convexity of $h$, $\{\x_k\}$ converges to $\x_\infty \in X_*^\rho(\M)$.
\end{proof}

The BPP has a linear rate
under the strong MVI condition.
\begin{theorem}
	\label{thm:bppm,str,conv}
	Let $\M$ 
	satisfy Assumptions~\ref{assum:M,weak} and~\ref{assum:strongmvi} 
	for some $\gamma, \mu\ge0$,
	and $h$ be a $\mu_h$-strongly convex and $L_h$-smooth Legendre function 
	with $\mu_h > \gamma$.
	Then, for $k\ge1$ and for any $\x_*\in S_*^{\mu}(\M)$, 
	the sequence $\{\x_k\}$ of the BPP method~\eqref{eq:bppm}
	satisfies
	\begin{align*}
	D_h(\x_*,\x_k) \le 
		\left(\frac{2\mu}{L_h}+1\right)^{-k}
        D_h(\x_*,\x_{0})
	.\end{align*}
\end{theorem}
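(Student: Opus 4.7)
The plan is to mimic the proof of Theorem~4.2 but replace the weak-MVI estimate with the strong-MVI one, and then convert the resulting squared Euclidean distance into a Bregman distance via $L_h$-smoothness.

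First, I would invoke Lemma~\ref{lem:single}: since $\mu_h > \gamma$, the $h$-resolvent $\R$ is single-valued on $\inter\dom h$, so the sequence $\{\x_k\}$ is well-defined. By the definition of the update $\x_{k+1} = \R\x_k$, we have $\nabla h(\x_k) - \nabla h(\x_{k+1}) \in \M\x_{k+1}$, so that $(\x_{k+1},\nabla h(\x_k)-\nabla h(\x_{k+1})) \in \gra\M$. Applying Assumption~\ref{assum:strongmvi} at the point $\x_*\in S_*^\mu(\M)$ gives
\begin{align*}
\inprod{\nabla h(\x_k) - \nabla h(\x_{k+1})}{\x_{k+1} - \x_*} \ge \mu \|\x_{k+1} - \x_*\|^2.
\end{align*}

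Next, I would rewrite the left-hand side via the standard three-point Bregman identity
\begin{align*}
\inprod{\nabla h(\x_k)-\nabla h(\x_{k+1})}{\x_{k+1}-\x_*} = D_h(\x_*,\x_k) - D_h(\x_*,\x_{k+1}) - D_h(\x_{k+1},\x_k),
\end{align*}
which, together with $D_h(\x_{k+1},\x_k) \ge 0$, yields
\begin{align*}
D_h(\x_*,\x_k) \ge D_h(\x_*,\x_{k+1}) + \mu\|\x_{k+1}-\x_*\|^2.
\end{align*}

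The main (and only) step requiring care is turning $\|\x_{k+1}-\x_*\|^2$ into something comparable to $D_h(\x_*,\x_{k+1})$. This is exactly where $L_h$-smoothness of $h$ enters: it implies $D_h(\x_*,\x_{k+1}) \le \tfrac{L_h}{2}\|\x_*-\x_{k+1}\|^2$, so that $\mu\|\x_{k+1}-\x_*\|^2 \ge \tfrac{2\mu}{L_h}D_h(\x_*,\x_{k+1})$. Substituting gives
\begin{align*}
D_h(\x_*,\x_k) \ge \left(1+\tfrac{2\mu}{L_h}\right) D_h(\x_*,\x_{k+1}),
\end{align*}
and iterating this one-step contraction from $k=0$ produces the claimed bound. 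I do not anticipate any real obstacle here; the strong-MVI inequality replaces the $\ge 0$ inequality used in Lemma~\ref{lem:expansive,weakmvi}, and the $L_h$-smoothness is the natural bridge between the Euclidean norm appearing in the strong-MVI condition and the Bregman distance appearing in the conclusion.
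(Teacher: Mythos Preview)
Your proposal is correct and follows essentially the same argument as the paper: apply the strong MVI inequality to $\nabla h(\x_k)-\nabla h(\x_{k+1})\in\M\x_{k+1}$, expand via the three-point Bregman identity, use $L_h$-smoothness to bound $\mu\|\x_{k+1}-\x_*\|^2\ge\tfrac{2\mu}{L_h}D_h(\x_*,\x_{k+1})$, drop the nonnegative $D_h(\x_{k+1},\x_k)$, and iterate. The only cosmetic difference is that the paper keeps $-D_h(\x_{k+1},\x_k)$ through the smoothness step and discards it at the end, whereas you drop it one line earlier.
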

\begin{proof}
By Lemma~\ref{lem:single}, the condition $\mu_h>\gamma$ implies that 
$\R\x$ exists for any $\x$.
By the definition of $\R\x$, we have $\nabla h(\x) - \nabla h(\R\x) \in \M\R\x$.
Then, Assumption~\ref{assum:strongmvi} on
$\M$ implies that
\begin{align*}
\mu||\x_* - \R\x||^2 &\le \inprod{\nabla h(\x) - \nabla h(\R\x)}{\R\x - \x_*} \\
&= -D_h(\R\x,\x) + D_h(\x_*,\x) - D_h(\x_*,\R\x)
.\end{align*}
By letting $\x = \x_{i-1}$
and using the $L_h$-smoothness of $h$, 
\ie, $D_h(\x_*,\x_i) \le \frac{L_h}{2}||\x_* - \x_i||^2$,
we have
\begin{align*}
\left(\frac{2\mu}{L_h}+1\right)D_h(\x_*,\x_i) \le -D_h(\x_i,\x_{i-1}) + D_h(\x_*,\x_{i-1})
\le D_h(\x_*,\x_{i-1})
.\end{align*}
\end{proof}

\subsection{Example: primal-dual hybrid gradient method}
\label{sec:pdhg}

This section considers the following bilinearly-coupled convex-concave minimax problems
\begin{align}
\min_{\u\in\reals^{d_u}}\max_{\vv\in\reals^{d_v}} 
\left\{
f(\u) + \inprod{\u}{\B\vv} - g(\vv)\right\}
\label{eq:P,bilinear}
,\end{align}
which is an instance of~\eqref{eq:P}.
%
The following choice of $h$ with $\tau\in\big(0,\frac{1}{||\B||}\big)$:
\begin{align}
h(\u,\vv) = \frac{1}{2\tau}\left(||\u||^2 + ||\vv||^2\right) - \inprod{\u}{\B\vv}
\label{eq:h,pdhg}
\end{align}
yields a (linear) preconditioner~\cite{he:12:cao}
\begin{align*}
\nabla h(\u,\vv) = \P(\u,\vv) := 
\left[\begin{array}{cc}
\frac{1}{\tau}\I & -\B \\
-\B^\top & \frac{1}{\tau}\I
\end{array}\right]
\left[\begin{array}{c}
\u \\ \vv
\end{array}\right]
.\end{align*}
In~\cite{he:12:cao},
the resulting Bregman (preconditioned) proximal point method
is shown to be equivalent to the PDHG~\cite{chambolle:11:afo,esser:10:agf}:
\comment{
\begin{align*}
    \left[\begin{array}{c}
\u_{k+1} \\ \vv_{k+1}
\end{array}\right] &= (\P + \M)^{-1}\P \left[\begin{array}{c}
\u_k \\ \vv_k
\end{array}\right]\\
    &= \left(\left[\begin{array}{cc}
\frac{1}{\tau}\I & -\B \\
-\B^\top & \frac{1}{\tau}\I
\end{array}\right] + 
\left[\begin{array}{cc}
\partial f & \B \\
-\B^\top & \partial g
\end{array}\right]\right)^{-1}
\left[\begin{array}{cc}
\frac{1}{\tau}\I & -\B \\
-\B^\top & \frac{1}{\tau}\I
\end{array}\right]
\left[\begin{array}{c}
\u_k \\ \vv_k
\end{array}\right]\\
&= \left[\begin{array}{cc}
\I + \tau\partial f & \zero \\
-2\tau\B^\top & \I + \tau\partial g
\end{array}\right]^{-1}
\left[\begin{array}{cc}
\I & -\tau\B \\
-\tau\B^\top & \I
\end{array}\right]
\left[\begin{array}{c}
\u_k \\ \vv_k
\end{array}\right],
\end{align*}
for $k=0,1,\ldots$, which can be rewritten as the followings.
}
\begin{align*}
    \u_{k+1} &= \prox_{\tau f}\left[\u_k - \tau\B\vv_k\right], \\
    \vv_{k+1} &= \prox_{\tau g}\left[\vv_k + \tau\B^\top(2\u_{k+1} - \u_k)\right],
\end{align*}
for $k=0,1,\ldots$,
where the proximal operator is defined as
$\prox_{\psi} := (\I + \partial\psi)^{-1}$.



The PDHG is similar to the alternating (proximal) gradient 
descent ascent method,
except that the PDHG uses the term
$ 
\B^\top(2\u_{k+1} - \u_k)
$ 
in the $\vv_{k+1}$ update,
instead of $\B^\top\u_{k+1}$. 
This simple modification improves convergence,
which is a one-sided variant
of the extragradient-type methods
\cite{hsieh:19:otc,korpelevich:76:aem,malitsky:20:gro,popov:80:amo}.


By Theorem~\ref{thm:bppm,weakmvi}
and a stronger nonexpansive property 
of $\R$ associated with the PDHG, 
\ie, 
$\inprod{\R\x - \R\y}{\P(\R\x - \R\y)} \le 
\inprod{\x - \y}{\P(\x - \y)}$ for all $\x,\y\in\reals^d$,
the last iterate of the PDHG satisfies
$ 
D_h(\x_k,\x_{k-1}) = \frac{1}{2}
\inprod{\x_k - \x_{k-1}}{\P(\x_k - \x_{k-1})}
\le O(1/k) 
$,
where $\x_k := (\u_k,\vv_k)$. 
In~\cite{kim:21:app},
the rate is improved to a fast $O(1/k^2)$ rate 
by a new momentum technique.
A more widely-known rate of PDHG for the averaged iterate $\bar{\x}_k = \frac{1}{k}\sum_{i=1}^k\x_i$,
in terms of the primal-dual gap, is studied
in~\cite{chambolle:11:afo}.

When $\B$ is a square matrix with full rank, 
the PDHG has a linear rate
$1-\tau^2\sigma_{\min}^2(\B)$,
similar to
its ODE analysis in~\cite{yadav:18:san}.
\begin{proposition}
	\label{prop:linear}
	Assume that $f=g=0$, and $\B$ is a square matrix with full rank.
	Let $\{\x_k\}$ be generated by the PDHG.
	Then, for any $\epsilon > 0$, there exists $K\ge0$ such that,
	for all $k\ge K$,
	\begin{align}
	||\hat{\x}_k - \hat{\x}_*||^2 \le 
	\Big(\sqrt{1 - \tau^2\sigma_{\min}^2(\B)} + \epsilon\Big)^{2k}
	||\hat{\x}_1 - \hat{\x}_*||^2
	\nonumber
	\end{align}
	where $\hat{\x}_k := (\x_k,\x_{k-1})$
	and
	$\hat{\x}_* := (\x_*,\x_*) = \zero$. 
\end{proposition}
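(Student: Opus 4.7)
The plan is to exploit that when $f=g=0$, the proximal operators collapse to the identity, so the PDHG recursion becomes a pure linear dynamical system. The solution is trivially $\x_* = \zero$, and substituting $2\u_{k+1} - \u_k = \u_k - 2\tau\B\vv_k$ shows that one iteration reads $\x_{k+1} = \T\x_k$ for the explicit block matrix
\begin{align*}
\T = \begin{bmatrix}\I & -\tau\B \\ \tau\B^\top & \I - 2\tau^2\B^\top\B\end{bmatrix}.
\end{align*}
Since $\hat{\x}_k = (\x_k,\x_{k-1})$ depends linearly on $\hat{\x}_{k-1}$, we also have $\hat{\x}_k = \hat{\T}\hat{\x}_{k-1}$ with
\begin{align*}
\hat{\T} = \begin{bmatrix}\T & \zero \\ \I & \zero\end{bmatrix}.
\end{align*}
A block-triangular determinant identity gives $\det(\hat{\T} - \lambda\I) = (-\lambda)^d\det(\T - \lambda\I)$, so the spectrum of $\hat{\T}$ equals that of $\T$ together with $d$ extra zeros, and in particular $\rho(\hat{\T}) = \rho(\T)$.

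Next I would diagonalize the dependence on $\B$ via its SVD $\B = \U_B\Sigma\V_B^\top$. Changing coordinates to $(\U_B^\top\u,\V_B^\top\vv)$ makes $\T$ unitarily similar to a matrix that, after reordering rows and columns to pair the $i$-th coordinate of $\U_B^\top\u$ with the $i$-th coordinate of $\V_B^\top\vv$, becomes block-diagonal with $2\times 2$ blocks
\begin{align*}
T_{\sigma_i} = \begin{bmatrix}1 & -\tau\sigma_i \\ \tau\sigma_i & 1-2\tau^2\sigma_i^2\end{bmatrix}
\end{align*}
indexed by the singular values $\sigma_i$ of $\B$. A direct computation of the characteristic polynomial gives $\lambda^2 - 2(1-\tau^2\sigma_i^2)\lambda + (1-\tau^2\sigma_i^2) = 0$; setting $a_i := 1-\tau^2\sigma_i^2 \in (0,1)$, which is guaranteed by $\tau\|\B\| < 1$, yields eigenvalues $\lambda_{\pm} = a_i \pm i\sqrt{a_i(1-a_i)}$ with $|\lambda_{\pm}|^2 = a_i^2 + a_i(1-a_i) = a_i$. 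Taking the worst case over $i$, and using that $\B$ has full rank so $\sigma_{\min}(\B) > 0$, one obtains $\rho(\T) = \sqrt{1-\tau^2\sigma_{\min}^2(\B)} < 1$.

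The final step is to translate the spectral-radius bound into the asymptotic operator-norm statement. Because $\T$ is generically non-normal, $\|\hat{\T}^k\|$ can exceed $\rho(\hat{\T})^k$ for small $k$; this is exactly why the proposition carries the slack $\epsilon$. By Gelfand's formula, $\|\hat{\T}^{k}\|^{1/k} \to \rho(\hat{\T}) = \sqrt{1-\tau^2\sigma_{\min}^2(\B)}$, so for any $\epsilon > 0$ there exists $K$ such that $\|\hat{\T}^{k-1}\| \le \bigl(\sqrt{1-\tau^2\sigma_{\min}^2(\B)} + \epsilon/2\bigr)^{k-1}$ for all $k \ge K$. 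Substituting $\hat{\x}_k - \hat{\x}_* = \hat{\T}^{k-1}(\hat{\x}_1 - \hat{\x}_*)$ and enlarging $K$ so that the leftover constant is absorbed into the geometric factor $\bigl(\sqrt{1-\tau^2\sigma_{\min}^2(\B)} + \epsilon\bigr)^{2k}$ yields the claim. The main obstacle is precisely this non-normality: obtaining an $\epsilon$-free geometric rate would require explicitly controlling the condition number of the change of basis that diagonalizes the blocks $T_{\sigma_i}$, which is why the proposition settles for the asymptotic $\epsilon$-form.
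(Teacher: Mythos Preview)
Your argument is correct; the eigenvalue computation, the use of the SVD of $\B$, and the Gelfand step all go through as written. The route, however, differs from the paper's. The paper eliminates $\vv$ to obtain a \emph{second-order} recursion in $\u$ alone,
\[
\u_{k+1} - 2\u_k + \u_{k-1} = -\tau^2\B\B^\top(2\u_k - \u_{k-1}),
\]
rewrites it as a first-order system on $(\u_k,\u_{k-1})$ with companion-type matrix $\T_{\u}$, computes $\rho(\T_{\u})$ via the SVD, does the symmetric argument for $(\vv_k,\vv_{k-1})$, and then concatenates the two bounds to reach $\hat{\x}_k$. You instead keep the joint first-order map $\x_{k+1} = \T\x_k$, augment it trivially to $\hat{\T}$ to match the $\hat{\x}_k$ in the statement, and diagonalize $\T$ directly into $2\times2$ blocks $T_{\sigma_i}$. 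The two $2\times2$ blocks---the paper's $\T_{\u,\sigma}$ and your $T_{\sigma}$---have the \emph{same} characteristic polynomial $\lambda^2 - 2(1-\tau^2\sigma^2)\lambda + (1-\tau^2\sigma^2)$, so the spectral radius coincides. Your organization is a bit more economical (no need to decouple $\u$ and $\vv$ or derive a second-order recurrence), while the paper's decoupling makes the connection to a scalar linear recurrence more transparent; both lead to the identical Gelfand-type conclusion.
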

\begin{proof}
See Appendix~\ref{appx:linear}.
\end{proof}
The standard proximal point method 
(the BPP method with 
$h(\x) = \frac{1}{2}||\x||^2$)
has a linear rate
$\frac{1}{1+\tau^2\sigma_{\min}^2(\B)}$ for bilinear problems~\cite{rockafellar:70:moa},
which is slower than
that
of the PDHG.
In~\cite{mokhtari:20:aua}, the extragradient type method has a rate 
similar but slower than that of the proximal point method,
implying that there is potential in further studying the PDHG type method. 

The PDHG has been extended to tackle nonlinear problems~\eqref{eq:P}
in~\cite{hamedani:21:apd,yadav:18:san,zhao:19:osa}
by generalizing the term
$\B^\top(2\u_{k+1} - \u_k)$.
In particular, under the setting different from this paper,
the prediction method in~\cite{yadav:18:san} uses the term $\nabla_{\vv}\phi(2\u_{k+1} - \u_k,\vv_k)$,
and papers~\cite{hamedani:21:apd,zhao:19:osa} consider
$2\nabla_{\vv}\phi(\u_k,\vv_k) - \nabla_{\vv}\phi(\u_{k-1},\vv_{k-1})$
with different update ordering.
Next section proposes a different extension of the PDHG method
by choosing a specific nonlinear function $h$ for the BPP method,
leading to the 
\emph{semi}-implicit gradient term 
$
2\nabla_{\vv}\phi(\u_{k+1},\vv_{k+1}) - \nabla_{\vv}\phi(\u_k,\vv_k)
$.

\section{Semi-anchored multi-step gradient descent ascent (SA-MGDA)} 
\label{sec:saag}

\subsection{
Constructing SA-MGDA from BPP}

Inspired by $h$~\eqref{eq:h,pdhg} of the PDHG,
this section considers the following:
\begin{align}
h(\u,\vv) = \frac{1}{2\tau}\left(||\u||^2 + ||\vv||^2\right) - \phi(\u,\vv)
\label{eq:h,saag}
\end{align}
under Assumption~\ref{assum:smooth}, 
which
is $\left(\frac{1}{\tau}-L\right)$-strongly convex\footnote{ 
We have that
$\inprod{\nabla h(\x) - \nabla h(\y)}{\x - \y} 
= \Inprod{\Big(\frac{1}{\tau}\I - \nabla\phi\Big)\x - \Big(\frac{1}{\tau}\I - \nabla\phi\Big)\y}{\x - \y} 
= \frac{1}{\tau}||\x-\y||^2  - \inprod{\nabla\phi(\x) - \nabla\phi(\y)}{\x-\y}
\ge \frac{1}{\tau}||\x-\y||^2 - ||\nabla\phi(\x) - \nabla\phi(\y)||\;||\x-\y|| 
\ge \left(\frac{1}{\tau} - L\right)||\x-\y||^2$
for all $\x,\y\in\reals^d$,
where the last inequality uses the $L$-Lipschitz continuity of $\nabla\phi$.}
when $\frac{1}{\tau} > L$.
This
yields the Bregman distance
\begin{align*}
D_h(\x,\y)
= \phi(\y) + \inprod{\nabla\phi(\y)}{\x - \y} + \frac{1}{2\tau}||\x - \y||^2 
- \phi(\x)
,\end{align*} 
which is a difference 
between the function $\phi$
and its quadratic upper bound at $\y$.
This is a non-standard optimality measure in minimax optimization,
but due to the smoothness and convexity of $h$,
we can show that this upper bounds
the squared subgradient norm
that is a standard optimality measure 
in nonconvex-nonconcave minimax optimization
\cite{diakonikolas:21:emf,lee:21:feg,pethick:22:elc}.

Since $\M$ in \eqref{eq:M} is $\hat{L}:=\max\{L_{\u\u},\,L_{\vv\vv}\}$-weakly monotone, 
the corresponding BPP update~\eqref{eq:bppm}
with $\nabla h = \frac{1}{\tau}\I - \nabla\phi$ is well-defined for $\frac{1}{\tau}-L>\hat{L}$,
by Lemma~\ref{lem:single}.
%
%
The BPP update~\eqref{eq:bppm} with $h$ in~\eqref{eq:h,saag} 
is
\begin{align*}
\left[\begin{array}{c}
\u_{k+1} \\ \vv_{k+1}
\end{array}\right]
= \left(\frac{1}{\tau}\I - \nabla\phi + \M\right)^{-1}
\left(\frac{1}{\tau}\I - \nabla\phi\right)
\left[\begin{array}{c}
\u_k \\ \vv_k
\end{array}\right]
.\end{align*}
This is equivalent to the followings:
\begin{align*}
&\frac{1}{\tau}\u_{k+1} - \nabla_{\u}\phi(\u_{k+1},\vv_{k+1}) + \nabla_{\u}\phi(\u_{k+1},\vv_{k+1})
+ \partial f(\u_{k+1})
\;\ni\;
\frac{1}{\tau}\u_k - \nabla_{\u}\phi(\u_k,\vv_k), \\
&\frac{1}{\tau}\vv_{k+1} - \nabla_{\vv}\phi(\u_{k+1},\vv_{k+1}) - \nabla_{\vv}\phi(\u_{k+1},\vv_{k+1})
+ \partial g(\vv_{k+1})
\;\ni\;
\frac{1}{\tau}\vv_k - \nabla_{\vv}\phi(\u_k,\vv_k), \nonumber
\end{align*}
and rewriting the above in the minimization and maximization form 
respectively yields 
the followings.
%
\begingroup
\allowdisplaybreaks
\begin{align}
\u_{k+1} &= \argmin{\u\in\reals^{d_u}}\Big\{\phi(\u_k,\vv_k) + 
\inprod{\nabla_{\u}\phi(\u_k,\vv_k)}{\u - \u_k} + \frac{1}{2\tau}||\u - \u_k||^2 + f(\u)\Big\}
\label{eq:uv} \\
\vv_{k+1} &= \argmax{\vv\in\reals^{d_v}} 
\Big\{
2\phi(\u_{k+1},\vv) \nonumber \\
&\hspace{55pt} - \phi(\u_k,\vv_k)
- \inprod{\nabla_{\vv}\phi(\u_k,\vv_k)}{\vv- \vv_k} - \frac{1}{2\tau}||\vv - \vv_k||^2 - g(\vv)\Big\} \nonumber \\
&= \argmax{\vv\in\reals^{d_v}}  \Big\{
2\phi(\u_{k+1},\vv) - \frac{1}{2\tau}||\vv - (\vv_k - \tau\nabla_{\vv}\phi(\u_k,\vv_k))||^2
 - g(\vv)\Big\}.  
\nonumber
\end{align}
\endgroup
The minimization in $\u$ can be solved by one proximal gradient update,
while the maximization in $\vv$
is equivalent to
an implicit update 
\begin{align*}
\vv_{k+1} = \prox_{\tau g}[\vv_k 
+ \tau(2\nabla_{\vv} \phi(\u_{k+1},\vv_{k+1}) - \nabla_{\vv}\phi(\u_k,\vv_k))]
.\end{align*}
In other words,
an iterative method is required for the maximization 
in $\vv$.
This maximization problem
consists of a $\big(\frac{1}{\tau} + 2L_{\vv\vv}\big)$-smooth 
and $\big(\frac{1}{\tau}-2L_{\vv\vv}\big)$-strongly concave function,
and a concave but possibly nonsmooth function $-g$.
We thus use 
a proximal gradient method
for total $J$ number of (inner) iterations 
for the (inexact) maximization in $\vv$.
The corresponding proximal gradient ascent steps 
are, for $j=0,1,\ldots,J-1$,
\begin{align}
\vv_{k,j+1} = \argmax{\vv\in\reals^{d_v}}  \Big\{&
2\phi(\u_{k+1},\vv_{k,j}) + \inprod{2\nabla_{\vv}\phi(\u_{k+1},\vv_{k,j})}{\vv - \vv_{k,j}}
 - L_{\vv\vv}||\vv - \vv_{k,j}||^2 \nonumber\\
 &- \frac{1}{2\tau}||\vv - (\vv_k - \tau\nabla_{\vv}\phi(\u_k,\vv_k))||^2
 - g(\vv)\Big\}
\label{eq:vvkjp} 
.\end{align}
Note that
fast proximal gradient methods~\cite{beck:09:afi,chambolle:16:ait,nesterov:83:amf}
can be used for acceleration.
The resulting SA-MGDA method is 
illustrated in Algorithm~\ref{alg:saag}
(for $L_{\vv\vv} > 0$).
The explicit update of $\vv_{k,j+1}$ in Algorithm~\ref{alg:saag} can be derived by
examining the optimality condition of~\eqref{eq:vvkjp}:
\begin{align*}
2\nabla_{\vv}\phi(\u_{k+1},\vv_{k,j}) - 2L_{\vv\vv}(\vv - \vv_{k,j})
-\frac{1}{\tau}(\vv - (\vv_k - \tau\nabla_{\vv}\phi(\u_k,\vv_k)))
- \partial g(\vv) 
\;\ni\;
\zero
.\end{align*}
%
%
The explicit update of $\vv_{k,j+1}$ 
in Algorithm~\ref{alg:saag}
involves a convex combination of an \emph{anchor} point%
, $\vv_k - \tau\nabla_{\vv}\phi(\u_k,\vv_k)$,
that only depends on the previous point $(\u_k,\vv_k)$
and a recent point%
, $\vv_{k,j}+ \frac{1}{L_{\vv\vv}}\nabla_{\vv}\phi(\u_{k+1},\vv_{k,j})$,
that depends on $(\u_{k+1},\vv_{k,j})$.
Since the anchoring appears only in the variable $\vv$,
we name this approach as \emph{semi-anchoring}.
%

\begin{algorithm}[h]
	\caption{SA-MGDA for~\eqref{eq:P}
	   with $\tau\in\big(\frac{\rho}{1-\rho L},\frac{1}{L+\hat{L}}\big)$, $\eta = \frac{\tau}{1 + 2L_{\vv\vv}\tau}$} 
	\label{alg:saag}
	\begin{algorithmic}
		\For{$k=0,1,\ldots$}
		\State{$\u_{k+1} = \prox_{\tau f}\left[\u_k - \tau\nabla_{\u}\phi(\u_k,\vv_k)\right]$,
		\; $\vv_{k,0} = \vv_k$
		}
		\For{$j=0,\ldots,J-1$}
		\State{$\vv_{k,j+1}
		= \prox_{\eta g}\Big[\frac{\eta}{\tau}\left(\vv_k 
		- \tau\nabla_{\vv}\phi(\u_k,\vv_k)\right)+ 2\eta L_{\vv\vv}\big(\vv_{k,j}
		+ \frac{1}{L_{\vv\vv}}\nabla_{\vv}\phi(\u_{k+1},\vv_{k,j})\big)\Big]$
		}
		\EndFor
		\State{$\vv_{k+1} = \vv_{k,J}$,
		\; $\x_{k+1} = (\u_{k+1},\vv_{k+1})$
		}
		\EndFor
	\end{algorithmic}
\end{algorithm}

\subsection{SA-MGDA with $J=\infty$}

The following theorems of the SA-MGDA method
with $J=\infty$ (or equivalently, with an exact maximization oracle)
are byproducts of Lemma~\ref{lem:single} 
and Theorems~\ref{thm:bppm,weakmvi} and~\ref{thm:bppm,str,conv}
of the BPP method, 
for a specific $h$ in~\eqref{eq:h,saag}
that is $\mu_h$-strongly convex and $L_h$-smooth
with $\mu_h=\frac{1}{\tau}-L$ and $L_h = \frac{1}{\tau}+L$. 

\begin{theorem}
	\label{thm:saag,weakmvi}
	Let $\M$~\eqref{eq:M} of the composite problem~\eqref{eq:P} 
	satisfy Assumption~\ref{assum:weakmvi}
	for some $\rho\in\big[0,\frac{1}{2L+\hat{L}}\big)$, 
	and let $f,g$ and $\phi$ satisfy Assumption~\ref{assum:smooth}.
	Then, the sequence $\{\x_k\}$
	of the SA-MGDA (with $J=\infty$) 
	satisfies, for $k\ge1$, $\tau\in\big(\frac{\rho}{1-\rho L},\frac{1}{L+\hat{L}}\big)$
	and for any $\x_*\in X_*^\rho(\M)$,
	\begin{align*}
	\min_{i=1,\ldots,k} \min_{\s_i\in\M\x_i} \frac{||\s_i||^2}{2\left(\frac{1}{\tau}+L\right)}
	\le
	\min_{i=1,\ldots,k} D_h(\x_i,\x_{i-1})
	\le&\; \frac{D_h(\x_*,\x_0)}{\left(1 - \rho\left(\frac{1}{\tau}+L\right)\right)k}
	.\end{align*}
	Moreover, 
	all limit points of the sequence $\{\x_k\}$ of the SA-MGDA method
	are in $X_*(\M)$,
	and if we further assume that
	$X_*^\rho(\M) = X_*(\M)$,
	the sequence $\{\x_k\}$ 
	converges to a solution $\x_*\in X_*^\rho(\M)$.
\end{theorem}
\begin{proof}
The proof follows from Theorem~\ref{thm:bppm,weakmvi}
	with constraints 
	$\mu_h>\hat{L}$ 
	and 
	$\rho L_h < 1$,
	yielding $\tau < \frac{1}{L+\hat{L}}$ and $\tau > \frac{\rho}{1-\rho L}$.
	We also need $\rho < \frac{1}{2L + \hat{L}}$,
	so that $\tau$ exists.
	The first inequality follows from 
	$\nabla h(\x_i) - \nabla h(\x_{i-1}) \in \M\x_i$,
	and the convex and $L_h$-smooth properties of $h$,
	\ie, $\frac{1}{2L_h}||\nabla h(\x) - \nabla h(\y)||^2 \le D_h(\x,\y)$
\cite[Theroem 2.1.5]{nesterov:18}.
\end{proof}

\begin{remark}
\label{remark:function}
A worst-case rate of the BPP method,
in terms of the function value,
is studied in~\cite{nemirovski:04:pmw}. 
Under 
the monotone condition on $\M_\phi$
with $f(\u) = \delta_{\Us}(\u) := \begin{cases}0, &\text{if $\u\in\Us$,}\\ \infty, &\text{otherwise,}\end{cases}$
and $g(\vv) = \delta_{\Vs}(\vv)$,
where $\Us$ and $\Vs$ are compact sets,
the BPP method with a strongly convex $h$ 
(and thus the SA-MGDA) satisfies
\cite{nemirovski:04:pmw}:
\begin{align*}
    \max_{\vv\in\Vs}\phi(\bar{\u}_k,\vv)
    - \min_{\u\in\Us}\phi(\u,\bar{\vv}_k)
    \le \frac{\max_{\x\in\Us\times\Vs}D_h(\x,\x_0)}{k}
\end{align*}
\end{remark}
for the averaged iterates 
$\bar{\u}_k = \frac{1}{k}\sum_{i=1}^k\u_i$
and $\bar{\vv}_k = \frac{1}{k}\sum_{i=1}^k\vv_i$.
This, however, intrinsically cannot be generalized to the nonconvex-nonconcave case.

\begin{theorem}
    \label{thm:saag,strongmvi}
	Let $\M$~\eqref{eq:M} of the composite problem~\eqref{eq:P} 
	satisfy Assumption~\ref{assum:strongmvi} for $\mu\ge0$,
	and let $f,g$ and $\phi$ satisfy Assumption~\ref{assum:smooth}.
	Then, for $k\ge1$, $\tau\in\big(0,\frac{1}{L+\hat{L}}\big)$ 
	and for any $\x_*\in S_*^{\mu}(\M)$, 
	the sequence of the SA-MGDA (with $J=\infty$) satisfies
	\begin{align*}
	D_h(\x_*,\x_k) 
	\le
		\left(1 + \frac{2\tau\mu}{1+\tau L}\right)^{-k}
        D_h(\x_*,\x_{0})
	.\end{align*} 
\end{theorem}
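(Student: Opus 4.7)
The plan is to derive this as a direct specialization of Theorem~\ref{thm:bppm,str,conv} applied to the BPP method with the Bregman generator $h$ chosen in~\eqref{eq:h,saag}, since the SA-MGDA update with $J=\infty$ is \emph{exactly} the BPP iteration~\eqref{eq:bppm} for that $h$, as already unpacked in Section~\ref{sec:saag}. So the whole task reduces to verifying the hypotheses of Theorem~\ref{thm:bppm,str,conv} for this specific $(h, \M)$ pair and then substituting the correct values of $\mu_h$ and $L_h$ into the contraction factor.

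First, I would record the needed properties of $h(\u,\vv) = \tfrac{1}{2\tau}(\|\u\|^2+\|\vv\|^2) - \phi(\u,\vv)$. By the footnote reasoning immediately following~\eqref{eq:h,saag}, the $L$-Lipschitz continuity of $\nabla\phi$ (guaranteed by Assumption~\ref{assum:smooth}) yields that $h$ is $\mu_h$-strongly convex and $L_h$-smooth with $\mu_h = \tfrac{1}{\tau}-L$ and $L_h = \tfrac{1}{\tau}+L$, provided $\tfrac{1}{\tau} > L$. Also, by the discussion preceding Assumption~\ref{assum:M,weak}, the operator $\M$ in~\eqref{eq:M} is $\hat{L}$-weakly monotone and maximal. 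Thus Assumption~\ref{assum:M,weak} holds with $\gamma = \hat{L}$, and the condition $\mu_h > \gamma$ of Theorem~\ref{thm:bppm,str,conv} becomes $\tfrac{1}{\tau}-L > \hat{L}$, i.e.\ $\tau < \tfrac{1}{L+\hat{L}}$, which is the restriction imposed in the statement.

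Second, with these verifications in place, Theorem~\ref{thm:bppm,str,conv} directly gives
\begin{align*}
D_h(\x_*,\x_k) \le \left(\frac{2\mu}{L_h}+1\right)^{-k} D_h(\x_*,\x_0)
\end{align*}
for any $\x_*\in S_*^{\mu}(\M)$. Substituting $L_h = \tfrac{1}{\tau}+L$ and simplifying,
\begin{align*}
\frac{2\mu}{L_h}+1 = \frac{2\mu\tau}{1+\tau L}+1 = 1+\frac{2\tau\mu}{1+\tau L},
\end{align*}
which matches the claimed contraction factor.

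I do not expect any real obstacle here: the conceptual work was done in Lemma~\ref{lem:single} and Theorem~\ref{thm:bppm,str,conv}, and the only substantive check is the routine computation of $\mu_h$ and $L_h$ for $h$ in~\eqref{eq:h,saag}. The mildest subtle point is ensuring $\R\x$ is well-defined along the trajectory, but Lemma~\ref{lem:single} handles this once $\mu_h > \hat{L}$ is established, which we have. Hence the proof is a two-line invocation of Theorem~\ref{thm:bppm,str,conv} followed by the algebraic simplification above.
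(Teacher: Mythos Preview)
Your proposal is correct and matches the paper's own proof essentially verbatim: the paper simply says the result follows from Theorem~\ref{thm:bppm,str,conv} with $\mu_h>\hat{L}$, and your verification of $\mu_h=\tfrac{1}{\tau}-L$, $L_h=\tfrac{1}{\tau}+L$, and the resulting step-size restriction and contraction factor is exactly the substitution the paper intends.
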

\begin{proof}
The proof follows from Theorem~\ref{thm:bppm,str,conv}
		with $\mu_h>\hat{L}$.
\end{proof}

\subsection{
SA-MGDA with a finite $J$
}

In most of the practical cases, 
an exact maximization oracle (or equivalently, considering $J=\infty$) 
is computationally intractable.
This section thus studies the convergence behavior of the SA-MGDA
with a finite $J$,
under an additional bounded domain assumption
that includes the case in Remark~\ref{remark:function}.
\begin{assumption}
\label{assum:domain}
For some $\Omega\ge0$,
$||\x-\y||\le \Omega$ for all $\x,\y\in \dom f\times\dom g$.
\end{assumption}

The SA-MGDA with a finite $J$
can be viewed as an inexact variant of the BPP method \cite{eckstein:98:aii},
which generates a point $\x_{k+1}$ different from $\R\x_k$
at the $k$th iteration.
Therefore, the proof of the following theorem for the SA-MGDA 
with a finite $J$
first extends Theorem~\ref{thm:bppm,weakmvi} of the (exact) BPP
to its inexact variant in Appendix~\ref{appx:sa-mgda,weakmvi,inexact}.
Then, we consequently have the following result
for the SA-MGDA.

\begin{theorem}\label{thm:sa-mgda,weakmvi,inexact}
Let $\M$~\eqref{eq:M} of the composite problem~\eqref{eq:P} 
	satisfy Assumption~\ref{assum:weakmvi}
	for some $\rho\in\big[0,\frac{1}{2L+\hat{L}}\big)$, 
	and let $f,g$ and $\phi$ satisfy Assumptions~\ref{assum:smooth}
	and~\ref{assum:domain}
	for some $\Omega\ge0$.
	Then, the sequence $\{\x_k\}$
	of the SA-MGDA (with a finite $J$) 
	satisfies, for $k\ge1$, $\tau\in\big(\frac{\rho}{1-\rho L},\frac{1}{L+\hat{L}}\big)$
	and for any $\x_*\in X_*^\rho(\M)$,
\begin{align*}
&\min_{i=1,\ldots,k}\min_{s_i\in \M\R\x_{i-1}}\frac{||\s_i||^2}{2\left(\frac{1}{\tau}+L\right)}
\le \min_{i=1,\ldots,k}D_h(\R\x_{i-1}, 
    \x_{i-1}) \\
&\le \frac{D_h(\x_*,\x_0)}{\left(1 - \rho\left(\frac{1}{\tau}+L\right)\right)k}
+ \frac{3\Omega^2 \left(\frac{1}{\tau}+L\right)}{2\left(1 - \rho\left(\frac{1}{\tau}+L\right)\right)}\exp{-\frac{\frac{1}{\tau}-2L_{\vv\vv}}{2\left(\frac{1}{\tau}+2L_{\vv\vv}\right)}J}. 
\end{align*}
\end{theorem}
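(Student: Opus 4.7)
The plan is to decompose the analysis into an \emph{inexact outer BPP} step (extending Lemma~\ref{lem:expansive,weakmvi} and Theorem~\ref{thm:bppm,weakmvi}) and an \emph{inner linear-rate} step that controls the inexactness in terms of $J$. Let $\hat{\x}_{k+1} := \R\x_k$ denote the exact resolvent and $\x_{k+1}$ the actual iterate produced by Algorithm~\ref{alg:saag}. Since $\u_{k+1}$ is computed exactly by one proximal gradient step, the two points differ only in their $\vv$-components, i.e., $\|\x_{k+1}-\hat{\x}_{k+1}\|=\|\vv_{k+1}-\vv^\ast_{k+1}\|$, where $\vv^\ast_{k+1}$ is the unique maximizer of the inner subproblem preceding~\eqref{eq:vvkjp}.

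For the outer step I would first apply the argument of Lemma~\ref{lem:expansive,weakmvi} at $\hat{\x}_{k+1}$ to obtain
\begin{align*}
D_h(\x_\ast,\hat{\x}_{k+1}) \le D_h(\x_\ast,\x_k) - (1-\rho L_h)\,D_h(\hat{\x}_{k+1},\x_k),
\end{align*}
and then convert $D_h(\x_\ast,\hat{\x}_{k+1})$ into $D_h(\x_\ast,\x_{k+1})$ via the three-point identity
\begin{align*}
D_h(\x_\ast,\x_{k+1}) = D_h(\x_\ast,\hat{\x}_{k+1}) + D_h(\hat{\x}_{k+1},\x_{k+1}) + \inprod{\nabla h(\hat{\x}_{k+1})-\nabla h(\x_{k+1})}{\x_\ast-\hat{\x}_{k+1}}.
\end{align*}
The cross term is handled by the $L_h$-smoothness of $h$ and the diameter bound $\|\x_\ast-\hat{\x}_{k+1}\|\le\Omega$, using Cauchy--Schwarz and Young's inequality. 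Telescoping the resulting one-step inequality over $i=0,\dots,k-1$ and dividing by $(1-\rho L_h)k$ produces a bound of the form
\begin{align*}
\min_{i} D_h(\R\x_{i-1},\x_{i-1}) \le \frac{D_h(\x_\ast,\x_0)}{(1-\rho L_h)k} + \frac{C\,L_h}{1-\rho L_h}\,\max_i \|\x_i-\hat{\x}_i\|^2,
\end{align*}
where the constant $C$ should come out to $3/2$ after a careful Young-type split that matches the ``$3\Omega^2/2$'' factor in the statement.

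The second step is to bound $\|\vv_{k+1}-\vv^\ast_{k+1}\|^2$ using the inner loop. Inspection of~\eqref{eq:vvkjp} shows that the inner subproblem amounts to minimizing a $\bigl(\tfrac{1}{\tau}-2L_{\vv\vv}\bigr)$-strongly convex, $\bigl(\tfrac{1}{\tau}+2L_{\vv\vv}\bigr)$-smooth function plus the convex function $g$. Standard analysis of the proximal gradient method with stepsize $\eta=\tau/(1+2L_{\vv\vv}\tau)=1/L_h^{\mathrm{in}}$ then yields the linear contraction
\begin{align*}
\|\vv_{k,J}-\vv^\ast_{k+1}\|^2 \le \left(1-\frac{\tfrac{1}{\tau}-2L_{\vv\vv}}{\tfrac{1}{\tau}+2L_{\vv\vv}}\right)^{J}\|\vv_k-\vv^\ast_{k+1}\|^2 \le \Omega^2\exp\!\left(-\frac{\tfrac{1}{\tau}-2L_{\vv\vv}}{\tfrac{1}{\tau}+2L_{\vv\vv}}\,J\right),
\end{align*}
using $1-x\le e^{-x}$ and $\|\vv_k-\vv^\ast_{k+1}\|\le\Omega$. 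The factor of $2$ in the denominator of the exponent in the statement should arise when converting this squared-distance bound to the $D_h$-scale, either through $D_h(\hat{\x},\x)\le\tfrac{L_h}{2}\|\hat{\x}-\x\|^2$ or, equivalently, through a linear-distance version of the same contraction.

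Finally, the leftmost inequality is immediate: since $\nabla h(\x_{i-1})-\nabla h(\R\x_{i-1})\in\M\R\x_{i-1}$ by definition of $\R$, any $\s_i$ in that image satisfies $\|\s_i\|^2\le 2L_h\,D_h(\R\x_{i-1},\x_{i-1}) = 2\bigl(\tfrac{1}{\tau}+L\bigr)D_h(\R\x_{i-1},\x_{i-1})$ by $L_h$-smoothness of $h$. The main obstacle I anticipate lies in the inexact outer step: precisely matching the constant $\tfrac{3}{2}$ in front of $\Omega^2$ from the Young split, and ensuring that the $\|\x_i-\hat{\x}_i\|^2$-type remainders generated at each iteration do not accumulate across the telescoping sum. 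I expect to resolve this by choosing the Young weights so that the leading $(1-\rho L_h)\,D_h(\hat{\x}_{k+1},\x_k)$ term on the right-hand side absorbs the non-$\Omega^2$ portion of the inexactness, leaving only a bounded-in-$k$ perturbation that is then controlled by the inner linear rate.
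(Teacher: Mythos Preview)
Your overall strategy matches the paper's proof: establish an inexact analogue of Theorem~\ref{thm:bppm,weakmvi} via the three-point identity, then control the per-step inexactness by the linear rate of the inner proximal gradient loop. The left-hand inequality and the $\u$-exactness observation are also handled exactly as in the paper. However, your handling of the cross term is off in a way that would not produce the stated constants, and your anticipated ``Young-type split'' is precisely where the plan would go wrong.

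The paper does \emph{not} use Young's inequality. After the three-point identity one has
\[
D_h(\x_*,\x_i)-D_h(\x_*,\x_i^*)\le \tfrac{L_h}{2}\|\x_i-\x_i^*\|^2+L_h\|\x_i-\x_i^*\|\,\|\x_*-\x_i^*\|,
\]
and this is bounded \emph{linearly} in $\|\x_i-\x_i^*\|$ by applying $\|\x_i-\x_i^*\|\le\Omega$ to the quadratic term and $\|\x_*-\x_i^*\|\le\Omega$ to the cross term, yielding $\tfrac{3\Omega L_h}{2}\|\x_i-\x_i^*\|$. That is the origin of the $\tfrac32$. Summing $k$ such terms and dividing by $(1-\rho L_h)k$ leaves a uniform perturbation $\tfrac{3\Omega L_h}{2(1-\rho L_h)}\max_i\|\x_i-\x_i^*\|$---linear, not quadratic as you wrote. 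A Young split would instead produce a constant-order $\Omega^2$ remainder per step that does not vanish with $J$; this is exactly the accumulation problem you foresaw, and it disappears once you drop Young in favor of the crude $\|\x_i-\x_i^*\|\le\Omega$ bound.

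Correspondingly, the factor $2$ in the exponent is not a $D_h$-scale artifact. Because the outer bound is linear in $\|\x_i-\x_i^*\|$, the paper invokes the \emph{distance} (not squared-distance) contraction of proximal gradient on the $\bigl(\tfrac{1}{\tau}-2L_{\vv\vv}\bigr)$-strongly convex, $\bigl(\tfrac{1}{\tau}+2L_{\vv\vv}\bigr)$-smooth inner problem, namely $\|\vv_i-\vv_i^*\|\le\Omega\exp\bigl(-\tfrac{1/\tau-2L_{\vv\vv}}{2(1/\tau+2L_{\vv\vv})}J\bigr)$, obtained by taking the square root of the squared-distance rate you wrote. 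Multiplying this by $\tfrac{3\Omega L_h}{2(1-\rho L_h)}$ gives the second term in the statement verbatim.
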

\begin{proof}
See Appendix~\ref{appx:sa-mgda,weakmvi,inexact}.
\end{proof}

This inequality reduces to that of
Theorem~\ref{thm:saag,weakmvi} as $J\to\infty$.
By Theorem~\ref{thm:sa-mgda,weakmvi,inexact}, 
the following corollary illustrates that
one can find an $\epsilon$-stationary point
using total $O(\epsilon^{-1}\log\epsilon^{-1})$
gradient computations in SA-MGDA.

\begin{corollary}
Under the conditions in Theorem~\ref{thm:sa-mgda,weakmvi,inexact},
the SA-MGDA method finds an $\epsilon$-stationary point,
i.e., a point $\x$ satisfying $\min_{\s\in \M\R\x}||\s||^2\le \epsilon$,
with $k=O(\epsilon^{-1})$ number of outer iterations and $J=O(\log(\epsilon^{-1}))$ number of inner iterations,
requiring total $O(\epsilon^{-1}\log \epsilon^{-1})$ gradient computations.
\end{corollary}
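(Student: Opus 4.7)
The plan is to read off the two-term bound in Theorem~\ref{thm:sa-mgda,weakmvi,inexact} and balance the two terms by making each of them at most $\epsilon/2$. This is essentially a ``choose parameters to equate sources of error'' argument, with no additional analysis needed.

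First, I would rewrite the bound of Theorem~\ref{thm:sa-mgda,weakmvi,inexact} after multiplying both sides by $2(\tfrac{1}{\tau}+L)$, in the compact form
\begin{align*}
\min_{i=1,\ldots,k}\min_{\s_i\in \M\R\x_{i-1}}\|\s_i\|^2 \;\le\; \frac{A}{k} \;+\; B\,e^{-\alpha J},
\end{align*}
where $A:=\dfrac{2(\tfrac{1}{\tau}+L)D_h(\x_*,\x_0)}{1-\rho(\tfrac{1}{\tau}+L)}$, $B:=\dfrac{3\Omega^2(\tfrac{1}{\tau}+L)^2}{1-\rho(\tfrac{1}{\tau}+L)}$, and $\alpha:=\dfrac{\tfrac{1}{\tau}-2L_{\vv\vv}}{2(\tfrac{1}{\tau}+2L_{\vv\vv})}$. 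Under the hypotheses $\tau\in\bigl(\frac{\rho}{1-\rho L},\frac{1}{L+\hat L}\bigr)$ and $\hat L\ge L_{\vv\vv}$, the quantities $A$, $B$, and $\alpha$ are all finite positive constants independent of $\epsilon$ (in particular $\tfrac{1}{\tau}>L+\hat L\ge 2L_{\vv\vv}$ guarantees $\alpha>0$, and $\rho(\tfrac{1}{\tau}+L)<1$ follows from $\tau>\tfrac{\rho}{1-\rho L}$, so the denominators are positive).

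Second, I would impose the two conditions $A/k\le\epsilon/2$ and $B\,e^{-\alpha J}\le\epsilon/2$ separately. The first gives $k\ge 2A/\epsilon$, i.e., $k=O(\epsilon^{-1})$ suffices to control the outer-iteration error. The second gives $J\ge \tfrac{1}{\alpha}\log(2B/\epsilon)$, i.e., $J=O(\log\epsilon^{-1})$ suffices to control the inner-inexactness error; the logarithmic dependence comes precisely from the exponential decay in $J$ in Theorem~\ref{thm:sa-mgda,weakmvi,inexact}. Under these choices, the sum is at most $\epsilon$, so there exists $i\in\{1,\dots,k\}$ with $\min_{\s\in\M\R\x_{i-1}}\|\s\|^2\le\epsilon$, yielding an $\epsilon$-stationary point.

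Third, I would count gradient computations. Each outer iteration performs one (proximal) gradient descent step in $\u$ and $J$ (proximal) gradient ascent steps in $\vv$, so it costs $O(J)$ gradient evaluations. Over $k$ outer iterations the total is $O(kJ)=O(\epsilon^{-1}\log\epsilon^{-1})$, as claimed. There is no real obstacle; the only point that requires a little care is verifying that the parameter range given in the theorem makes $A$, $B$, $\alpha$ simultaneously well-defined and positive, which is handled in the first paragraph above.
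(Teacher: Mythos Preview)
Your proposal is correct and is exactly the intended argument: the paper states this corollary without proof, as an immediate consequence of Theorem~\ref{thm:sa-mgda,weakmvi,inexact}, and your balancing of the $A/k$ and $B e^{-\alpha J}$ terms is the standard (and only reasonable) way to extract it. Your checks that $A,B,\alpha$ are positive under the stated parameter range are also correct and appropriately careful.
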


We similarly analyze the SA-MGDA with a finite $J$
under the strong MVI condition.
Note that the following theorem reduces to 
Theorem~\ref{thm:saag,strongmvi} as $J\to\infty$.

\begin{theorem}\label{thm:sa-mgda,strongmvi,inexact}
Let $\M$~\eqref{eq:M} of the composite problem~\eqref{eq:P} 
	satisfy Assumption~\ref{assum:strongmvi} for $\mu\ge0$,
	and let $f,g$ and $\phi$ satisfy Assumptions~\ref{assum:smooth}
	and~\ref{assum:domain}
	for some $\Omega\ge0$.
	Then, for $k\ge1$, $\tau\in\big(0,\frac{1}{L+\hat{L}}\big)$ 
	and for any $\x_*\in S_*^{\mu}(\M)$, 
	the sequence of the SA-MGDA (with finite $J$) satisfies
	\begin{align*}
    D_h(\x_*,\x_k)\le& 
    \left(
    1 + \frac{2\tau\mu}{1+\tau L}
    \right)^{-k} D_h(\x_*,\x_0)\\
    &+ \sum_{i=1}^k\left(
    1 + \frac{2\tau\mu}{1+\tau L}
    \right)^{-i+1}\frac{3\Omega^2 \left(\frac{1}{\tau}+L\right)}{2}\exp{-\frac{\frac{1}{\tau}-2L_{\vv\vv}}{2\left(\frac{1}{\tau}+2L_{\vv\vv}\right)}J}.
    \end{align*}
\end{theorem}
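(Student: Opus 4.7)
The plan is to mimic the proof of Theorem~\ref{thm:saag,strongmvi} (which relies on the exact linear-contraction argument of Theorem~\ref{thm:bppm,str,conv}) but carry an additive error term at each outer iteration accounting for the fact that the inner loop returns $\x_{k+1}\ne\R\x_k$. Concretely, I would first isolate the exact-BPP contraction
\[
\Big(1+\frac{2\mu}{L_h}\Big)D_h(\x_*,\R\x_k) \le D_h(\x_*,\x_k),
\]
which, with $L_h=\frac{1}{\tau}+L$, already appears in the proof of Theorem~\ref{thm:bppm,str,conv} after combining strong MVI applied to the pair $(\R\x_k,\,\nabla h(\x_k)-\nabla h(\R\x_k))\in\gra\M$ with $L_h$-smoothness of $h$.

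Next I would convert $D_h(\x_*,\R\x_k)$ into $D_h(\x_*,\x_{k+1})$ via the three-point identity
\[
D_h(\x_*,\x_{k+1}) - D_h(\x_*,\R\x_k) = D_h(\R\x_k,\x_{k+1}) + \inprod{\nabla h(\R\x_k)-\nabla h(\x_{k+1})}{\x_*-\R\x_k}.
\]
The first term is controlled by $\tfrac{L_h}{2}\|\R\x_k-\x_{k+1}\|^2$; the second by Cauchy--Schwarz, $L_h$-smoothness of $h$, and the diameter assumption $\|\x_*-\R\x_k\|\le\Omega$, giving $L_h\,\Omega\,\|\R\x_k-\x_{k+1}\|$. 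Because the $\u$-block of SA-MGDA coincides with the exact BPP $\u$-component, $\|\R\x_k-\x_{k+1}\|=\|\tilde{\vv}_*-\vv_{k+1}\|$, where $\tilde{\vv}_*$ is the exact maximiser of the strongly concave inner objective \eqref{eq:vvkjp}. Running $J$ proximal-gradient-ascent steps on that objective (strong-concavity modulus $\frac{1}{\tau}-2L_{\vv\vv}$, smoothness $\frac{1}{\tau}+2L_{\vv\vv}$) yields the linear-convergence bound
\[
\|\tilde{\vv}_*-\vv_{k+1}\|\le \Omega\exp\!\Big(-\tfrac{\frac{1}{\tau}-2L_{\vv\vv}}{2(\frac{1}{\tau}+2L_{\vv\vv})}J\Big),
\]
which is exactly the factor appearing in Theorem~\ref{thm:sa-mgda,weakmvi,inexact} and whose derivation I would borrow verbatim from Appendix~\ref{appx:sa-mgda,weakmvi,inexact}. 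Combining the two contributions and using $\delta^2\le\delta$ for $\delta:=\|\R\x_k-\x_{k+1}\|/\Omega\le 1$ collapses them into the single constant $3\Omega^2 L_h/2$ that appears in the statement.

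Putting everything together gives a clean one-step recursion
\[
D_h(\x_*,\x_{k+1}) \le r\,D_h(\x_*,\x_k) + E,
\]
with $r=(1+2\tau\mu/(1+\tau L))^{-1}$ and $E=\tfrac{3\Omega^2(\frac{1}{\tau}+L)}{2}\exp\!\big(-\tfrac{\frac{1}{\tau}-2L_{\vv\vv}}{2(\frac{1}{\tau}+2L_{\vv\vv})}J\big)$. Unrolling geometrically yields $D_h(\x_*,\x_k)\le r^k D_h(\x_*,\x_0) + E\sum_{i=1}^{k}r^{i-1}$, which matches the stated bound after identifying $r^{i-1}=(1+2\tau\mu/(1+\tau L))^{-i+1}$. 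The main obstacle is the bookkeeping in the middle step: one must convert the inner-loop error from the $\vv$-block into a bound on $D_h(\x_*,\x_{k+1})-D_h(\x_*,\R\x_k)$ that combines the linear and quadratic three-point-identity contributions into a single clean constant, and must keep careful track of the $\mu$-strong-convexity/$L_h$-smoothness constants of the specific $h$ in \eqref{eq:h,saag} so that the exact-BPP factor $r$ comes out with the right algebraic form. Once the inexact analogue of Lemma~\ref{lem:expansive,weakmvi} from Appendix~\ref{appx:sa-mgda,weakmvi,inexact} is in hand, the remainder is a standard geometric-series unrolling.
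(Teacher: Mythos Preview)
Your proposal is correct and follows essentially the same route as the paper: the paper likewise establishes the exact-BPP contraction $(1+2\mu/L_h)D_h(\x_*,\R\x_{k})\le D_h(\x_*,\x_k)$, bounds $D_h(\x_*,\x_{k+1})-D_h(\x_*,\R\x_k)$ via the three-point identity as $\tfrac{L_h}{2}\|\x_{k+1}-\R\x_k\|^2+L_h\|\x_{k+1}-\R\x_k\|\,\|\x_*-\R\x_k\|$, invokes the same inner-loop linear-convergence estimate from Appendix~\ref{appx:sa-mgda,weakmvi,inexact}, and unrolls the resulting one-step recursion geometrically. Your use of $\delta^2\le\delta$ to collapse the two error contributions into $\tfrac{3\Omega^2 L_h}{2}$ is exactly how the paper obtains that constant as well.
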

\begin{proof}
See Appendix~\ref{appx:sa-mgda,strongmvi,inexact}.
\end{proof}
\begin{corollary}
Under the conditions in Theorem~\ref{thm:sa-mgda,strongmvi,inexact},
the SA-MGDA method achieves 
$D_h(\x_*,\x_k)\le \epsilon$
with $k=O(\log(\epsilon^{-1}))$ number of outer iterations and $J=O(\log(\epsilon^{-1}))$ number of inner iterations,
requiring total $O(\log^2(\epsilon^{-1}))$ gradient computations.
\end{corollary}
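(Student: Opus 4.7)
The plan is to start directly from the bound provided by Theorem~\ref{thm:sa-mgda,strongmvi,inexact} and choose $k$ and $J$ so that each of the two terms on the right-hand side is at most $\epsilon/2$. Writing $q := 1 + \frac{2\tau\mu}{1+\tau L} > 1$, the bound reads
\begin{align*}
D_h(\x_*,\x_k) \le q^{-k} D_h(\x_*,\x_0) + C_1 \exp\!\left(-C_2 J\right)\sum_{i=1}^k q^{-i+1},
\end{align*}
where $C_1 = \tfrac{3\Omega^2(1/\tau+L)}{2}$ and $C_2 = \tfrac{1/\tau - 2L_{\vv\vv}}{2(1/\tau + 2L_{\vv\vv})}$ are positive constants (independent of $k,J,\epsilon$) under the hypotheses $\tau \in (0,1/(L+\hat L))$.

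First I would handle the first term: since $q>1$, requiring $q^{-k} D_h(\x_*,\x_0) \le \epsilon/2$ amounts to $k \ge \log(2 D_h(\x_*,\x_0)/\epsilon)/\log q = O(\log(\epsilon^{-1}))$, giving the stated outer-iteration complexity. Next I would observe the crucial (and only mildly interesting) point, namely that the geometric series in the second term is uniformly bounded in $k$:
\begin{align*}
\sum_{i=1}^k q^{-i+1} = \sum_{j=0}^{k-1} q^{-j} \le \frac{1}{1-q^{-1}} = \frac{q}{q-1} = \frac{1+\tau L+2\tau\mu}{2\tau\mu} =: C_3.
\end{align*}
Hence the error contributed by inexactness is at most $C_1 C_3 \exp(-C_2 J)$, independent of $k$. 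To make this at most $\epsilon/2$, it suffices to take $J \ge C_2^{-1} \log(2 C_1 C_3/\epsilon) = O(\log(\epsilon^{-1}))$.

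Combining, $D_h(\x_*,\x_k)\le \epsilon$ is guaranteed once $k = O(\log(\epsilon^{-1}))$ and $J = O(\log(\epsilon^{-1}))$. Each outer iteration performs a constant number of gradient evaluations of $\phi$ on the $\u$-side plus $J$ proximal-gradient evaluations on the $\vv$-side, so the total gradient complexity is $k \cdot J = O(\log^2(\epsilon^{-1}))$, as claimed. The only subtlety to highlight in the proof will be the uniform-in-$k$ bound on the geometric sum, which is what prevents the inexactness term from growing with $k$; beyond this, the argument is a routine logarithmic inversion.
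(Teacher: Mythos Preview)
Your proposal is correct and is exactly the natural argument the paper leaves implicit (the corollary is stated without proof): bound the geometric sum $\sum_{i=1}^k q^{-i+1}$ by $q/(q-1)$ and then invert each of the two terms separately. The only small point worth making explicit is that $q>1$ requires $\mu>0$; the theorem allows $\mu\ge 0$, but the corollary's $O(\log(\epsilon^{-1}))$ claim is vacuous when $\mu=0$, so this is a harmless implicit assumption.
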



\comment{
\subsection{Comparison to existing analysis}

Under Assumption~\ref{assum:weakmvi},
an extragradient-type method, named EG+~\cite{diakonikolas:21:emf},
finds a stationary point 
for $\rho \in \big[0,\frac{1}{4L}\big)$,
whereas the SA-MGDA method works for 
a wider region $\rho \in \big[0,\frac{1}{2L+\hat{L}}\big)$.
Under Assumption~\ref{assum:strongmvi},
	a variant of
	the dual extrapolation method~\cite{nesterov:07:dea}, 
	named OptDE~\cite{song:20:ode},
	has a linear rate.
In addition,
under a stronger assumption that
$\M$ is $\mu$-strongly monotone, 
\cite{mokhtari:20:aua} shows that the extragradient-type methods 
with step size $\frac{1}{4L}$
have rate $1-\frac{\mu}{4L}$,
while the SA-MGDA method with $\tau\approx\frac{1}{2L}$ (noting that $\hat{L}\le L$) has a faster rate
$\left(1+\frac{2\mu}{3L}\right)^{-1}$.
In short, the SA-MGDA method has faster rates compared to EG+ and OptDE under Assumption~\ref{assum:weakmvi} and Assumption~\ref{assum:strongmvi}, respectively; however, they are marginal advantages.
Here, our main contribution is the fact that the SA-MGDA method converges under settings that the MGDA method,
one of the fundamental minimax training methods, does not converge.
}

\subsection{Comparison to proximal point method}

Lemma~\ref{lem:single} and
Theorems~\ref{thm:bppm,weakmvi} and~\ref{thm:bppm,str,conv}
also apply
to the standard proximal point method 
(with 
$h(\x) = \frac{1}{2\tau}||\x||^2$),
\ie, $\x_{k+1} = (I + \tau\M)^{-1}\x_k$.
Such method, however, requires solving a regularized 
\emph{minimax} problem,
\begin{align*}
    & (\u_{k+1},\vv_{k+1}) \\ 
    =&\;
    \argmin{\u\in\reals^{d_u}}\max_{\vv\in\reals^{d_v}} \left\{f(\u) + \phi(\u,\vv) - g(\vv) + \frac{1}{2\tau}||\u-\u_k||^2 - \frac{1}{2\tau}||\vv-\vv_k||^2\right\},
\end{align*}
at each iteration, while the SA-MGDA needs one gradient descent update
and a \emph{maximization} at each iteration.
Both proximal point methods 
intrinsically have an implicit regularization (smoothing),
and thus have a good convergence guarantee,
while the latter is preferred
in terms of the computational complexity.

\section{Extensions of SA-MGDA}
\label{sec:exten}

\subsection{SA-MGDA with backtracking line-search}

The SA-MGDA method requires the knowledge of 
the global Lipschitz constants
of $\phi$, 
such as $L$, $\hat{L}$ and $L_{\vv\vv}$,
which can be locally conservative.
In addition, 
they are usually difficult to compute in practice.
%
To deal with these two drawbacks, 
we adapt a backtracking line-search technique
\cite{beck:09:afi,malitsky2018first,mukkamala2020convex}
in Algorithm~\ref{alg:sa-mgda,line},
which adjusts (decreases) 
$\tau$ at each iteration,
according to the local Lipschitz constant.
Note that the existence of $\R_{\M}^{\bar{h}}(\x_k)$ 
is guaranteed if
the maximization in $\vv$~\eqref{eq:uv}:
\begin{align*}
    \argmax{\vv\in\reals^{d_v}}  \Big\{
2\phi(\u_{k+1},\vv) - \frac{1}{2\bar{\tau}}||\vv - (\vv_k - \bar{\tau}\nabla_{\vv}\phi(\u_k,\vv_k))||^2
 - g(\vv)\Big\}
\end{align*}
is nonempty, including a local maximum.
Regarding the computation of $\R_{\M}^{\bar{h}}$ in Algorithm~\ref{alg:sa-mgda,line},
the standard SA-MGDA in Algorithm~\ref{alg:saag}
uses a proximal gradient ascent method on $\vv$ with known $L_{\vv\vv}$.
Here, one can apply its backtracking version 
in~\cite{beck:09:afi},
without the knowledge of $L_{\vv\vv}$.

\begin{algorithm}[h]
\caption{SA-MGDA with backtracking line-search 
    for $\tau_0\in\big[\frac{\delta}{L+\hat{L}},\infty\big)$, $\delta\in \big(2\rho(L+\hat{L}),1\big)$} 
\label{alg:sa-mgda,line}
\begin{algorithmic}
	\For{$k=0,1,\ldots$}
	\State{Find the smallest nonnegative integer $i_k$ such that
	$\bar{\x} = \R_{\M}^{\bar{h}}(\x_k)$ exists
	and satisfies}
	\State{$\frac{
		\bar{\tau}}{4}||\nabla\bar{h}(\bar{\x}) - \nabla\bar{h}(\x_k)||^2 \le D_{\bar{h}}(\bar{\x},\x_k), 
	\text{where $\bar{\tau} = \delta^{i_k}\tau_k$\; and\; 
	$\bar{h} = \frac{1}{2\bar{\tau}} ||\cdot||^2 - \phi$.}	
	$}
	\State{Let \;$\tau_{k+1} = \delta^{i_k}\tau_k$\; 
	and\; $h_{k+1} = \frac{1}{2\tau_{k+1}}||\cdot||^2 - \phi$.} 
	\State{$\x_{k+1} = \R_{\M}^{h_{k+1}}(\x_k)$}
	\EndFor
\end{algorithmic}
\end{algorithm}


Under an additional assumption that the 
sequence $\{\x_k\}$ of the SA-MGDA with backtracking line-search is bounded\footnote{
This holds, for example, 
under Assumption~\ref{assum:domain}, \ie,
when the domain of the problem is bounded}\!\!,
the following theorem 
shows that it
has an $O(1/k)$ worst-case convergence rate,
when the backtracking parameters $\tau_0$ and $\delta$
are appropriately chosen\footnote{
For instance, let $\tau_0$ be large enough, 
and $\delta = \frac{1}{2}$.
Then by Theorem~\ref{thm:sa-mgda,line,weakmvi}, the inequality $\min_{i=1,\ldots,k} D_{h_i}(\x_i,\x_{i-1}) 
\le\frac{D_{h_1}(\x_*,\x_0) + 3LD^2}{\left(1 - 6\rho L\right)k}$ 
holds, for the case
$\rho\in\big[0,\frac{1}{6L}\big)$,
and $\hat{L}\in \big[0,\frac{L}{2}$\big).
Note that the region of $\rho$ becomes small 
for an aggressive backtracking line-search,
\ie, a small $\delta$.}\!\!.

\begin{theorem}
\label{thm:sa-mgda,line,weakmvi}
Let $\M$~\eqref{eq:M} of the composite problem~\eqref{eq:P} 
satisfy Assumption~\ref{assum:weakmvi}
for some $\rho\in\big[0,\frac{1}{2(L+\hat{L})}\big)$, 
and let $f,g$ and $\phi$ satisfy Assumption~\ref{assum:smooth}. 
Additionally, 
assume that $\|\x_k-\x_*\|\le C$ for all $\x_k$ and for some $C\ge0$.
Then, the sequence $\{\x_k\}$ of the SA-MGDA with backtracking line-search for some 
$\tau_0\in \big[\frac{\delta}{L+\hat{L}},
\infty\big)$
and
$\delta\in \big(2\rho(L+\hat{L}),1\big)$ 
satisfies,
for any $\x_*\in X_*^\rho(\M)$,
\begin{align*}
\min_{i=1,\ldots,k} D_{h_i}(\x_i,\x_{i-1}) \le \frac{D_{h_1}(\x_*,\x_0) 
	+ \frac{\tilde{L}}{2}C^2}{\big(1 - \rho \tilde{L}\big)k},
\quad
\text{where } \tilde{L}:=\frac{2(L+\hat{L})}{\delta}.
\end{align*}
\end{theorem}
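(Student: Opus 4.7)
The plan is to adapt the proof of Theorem~\ref{thm:bppm,weakmvi} to the setting where the Legendre function $h_k$ (and hence the Bregman distance $D_{h_k}$) changes from iteration to iteration due to the backtracking schedule $\tau_k$. Two preliminary facts will drive everything. First, I would establish by induction that the line-search terminates in finitely many steps at every $k$ and that $\tau_k\ge\frac{\delta}{L+\hat L}$ for all $k\ge 0$: for the induction step, if $i_k\ge 1$, then the previous trial value $\delta^{i_k-1}\tau_k$ failed the line-search criterion, but any $\bar\tau<\frac{1}{L+\hat L}$ satisfies both the existence hypothesis of Lemma~\ref{lem:single} and, by $L_{\bar h}\le \frac{1}{\bar\tau}+L$, the smoothness-type inequality $\frac{\bar\tau}{4}\|\nabla\bar h(\bar\x)-\nabla\bar h(\x_k)\|^2\le D_{\bar h}(\bar\x,\x_k)$. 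So $\delta^{i_k-1}\tau_k\ge\frac{1}{L+\hat L}$, giving $\tau_{k+1}\ge\frac{\delta}{L+\hat L}$. This yields the uniform bound $\frac{2}{\tau_{k+1}}\le\tilde L$.

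Next, I would repeat the argument of Lemma~\ref{lem:expansive,weakmvi} using the line-search inequality in place of global smoothness. From $\nabla h_{k+1}(\x_k)-\nabla h_{k+1}(\x_{k+1})\in \M\x_{k+1}$ and Assumption~\ref{assum:weakmvi}, the same three-point expansion gives
\begin{align*}
D_{h_{k+1}}(\x_*,\x_{k+1})\le D_{h_{k+1}}(\x_*,\x_k)-\Bigl(1-\tfrac{2\rho}{\tau_{k+1}}\Bigr)D_{h_{k+1}}(\x_{k+1},\x_k),
\end{align*}
and the bound $\frac{2}{\tau_{k+1}}\le\tilde L$ upgrades the coefficient to $(1-\rho\tilde L)>0$ since $\delta>2\rho(L+\hat L)$.

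The main obstacle is that this per-step descent is measured against $D_{h_{k+1}}$ on both sides, so summing needs a correction when switching from $D_{h_{k+1}}(\x_*,\x_k)$ back to $D_{h_k}(\x_*,\x_k)$. A direct computation using $h_{k+1}(\x)-h_k(\x)=\alpha_k\|\x\|^2$ with $\alpha_k=\frac{1}{2\tau_{k+1}}-\frac{1}{2\tau_k}\ge 0$ yields the clean identity
\begin{align*}
D_{h_{k+1}}(\x_*,\x_k)=D_{h_k}(\x_*,\x_k)+\alpha_k\|\x_*-\x_k\|^2\le D_{h_k}(\x_*,\x_k)+\alpha_k C^2,
\end{align*}
using the boundedness hypothesis. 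Substituting this into the per-step descent and telescoping from $k=0$ to $K-1$ collapses the $\alpha_k$ sum to $\frac{1}{2\tau_K}-\frac{1}{2\tau_1}\le\frac{L+\hat L}{2\delta}=\frac{\tilde L}{4}\le\frac{\tilde L}{2}$, yielding
\begin{align*}
(1-\rho\tilde L)\sum_{i=1}^{K}D_{h_i}(\x_i,\x_{i-1})\le D_{h_1}(\x_*,\x_0)+\tfrac{\tilde L}{2}C^2,
\end{align*}
after dropping the nonnegative $D_{h_K}(\x_*,\x_K)$ on the left. Bounding the left side below by $K$ times the minimum and dividing by $(1-\rho\tilde L)K$ delivers the claim.
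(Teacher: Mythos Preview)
Your approach mirrors the paper's proof almost exactly: the lower bound $\tau_k\ge\frac{\delta}{L+\hat L}$, the per-step descent obtained by plugging the line-search inequality into the three-point identity, and the telescoping correction $D_{h_{k+1}}(\x_*,\x_k)-D_{h_k}(\x_*,\x_k)=\bigl(\tfrac{1}{2\tau_{k+1}}-\tfrac{1}{2\tau_k}\bigr)\|\x_*-\x_k\|^2$ are all identical to what the paper does.

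There is, however, one unjustified step. You discard $D_{h_K}(\x_*,\x_K)$ by declaring it nonnegative, but that requires $h_K$ to be convex, i.e.\ $\tau_K\le\frac{1}{L}$. The backtracking criterion is a \emph{local} test at the pair $(\x_{K},\x_{K-1})$ and does not force $\tau_K\le\frac{1}{L}$: if $\tau_0$ is large and the first few line-search tests happen to pass with $i_k=0$, then $\tau_K>\frac{1}{L}$ is perfectly possible, and $D_{h_K}(\x_*,\x_K)=\frac{1}{2\tau_K}\|\x_*-\x_K\|^2-D_\phi(\x_*,\x_K)$ can be negative. The paper does not drop this term; instead it bounds
\[
-D_{h_K}(\x_*,\x_K)\le D_\phi(\x_*,\x_K)\le\tfrac{L}{2}\|\x_*-\x_K\|^2\le\tfrac{L}{2}C^2
\]
via the $L$-smoothness of $\phi$ and the boundedness hypothesis, and then uses $\frac{L}{2}+\frac{\tilde L}{4}\le\frac{\tilde L}{2}$ (which holds because $L\delta\le L+\hat L$) to reach the stated constant. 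With this correction your argument goes through unchanged.
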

\begin{proof}
See Appendix~\ref{appx:sa-mgda,line,weakmvi}.
\end{proof}

\subsection{Non-Euclidean SA-MGDA for smooth adaptable problem}
\label{sec:nonlinear,bppm}

This section relaxes the smoothness condition on $\phi$
in Assumption~\ref{assum:smooth} 
to
its non-Euclidean extension, called
smooth adaptable condition~\cite{bolte:18:fom}. 

\subsubsection{Smooth adaptable problem and BPP}

We 
consider a function $\phi$ 
that is
smooth with respect to 
some Legendre function $\psi$,
\ie, both functions $L\psi - \phi$ and $L\psi + \phi$ are convex.
Examples of $\psi$ are
$\psi(\x) = -\sum_{i=1}^d\log x_i$~\cite{bauschke:16:adl}
and $\psi(\x) = \frac{1}{4}||\x||^4 + \frac{1}{2}||\x||^2$~\cite{bolte:18:fom}.

\begin{assumption}
\label{assum:smooth,adaptable}
A function $\phi\;:\;\reals^d\to \reals$ is continuously differentiable and $L$-smooth with respect to $\psi$, \ie,
there exists $L>0$ such that, for all $\x,\y\in\inter\dom\psi$,
\begin{align*}
|\inprod{\nabla\phi(\x) - \nabla\phi(\y)}{\x-\y}| \le L\inprod{\nabla\psi(\x) - \nabla\psi(\y)}{\x - \y}.
\end{align*}
\end{assumption}


Under Assumption~\ref{assum:smooth,adaptable},
we consider
\begin{align*}
h(\u,\vv) = \frac{1}{\tau}\psi(\u,\vv) - \phi(\u,\vv)
,\end{align*}
which is $\left(\frac{1}{\tau}-L\right)$-strongly convex\footnote{
We have that
$\inprod{\nabla h(\x) - \nabla h(\y)}{\x - \y}
= \Inprod{\Big(\frac{1}{\tau}\nabla\psi - \nabla\phi\Big)\x - \Big(\frac{1}{\tau}\nabla\psi - \nabla\phi\Big)\y}{\x-\y} 
= \frac{1}{\tau}\inprod{\nabla\psi(\x) - \nabla\psi(\y)}{\x-\y}  - \inprod{\nabla\phi(\x) - \nabla\phi(\y)}{\x-\y} 
\ge \left(\frac{1}{\tau} - L\right)\inprod{\nabla\psi(\x) - \nabla\psi(\y)}{\x-\y}$,
for all $\x,\y\in\reals^d$,
where the
inequality uses the smooth adaptable condition on $\phi$.} 
with respect to $\psi$,  
when $\frac{1}{\tau} > L$.
We further assume that 
$\M$
is weakly monotone with respect to $\psi$.

\begin{assumption}
\label{assum:M,weak,adaptable}
For some $\gamma\ge 0$, an operator $\M$ is $\gamma$-weakly monotone with respect to $\psi$, i.e.,
\begin{align*}
\inprod{\x-\y}{\w-\z} \ge -\gamma\inprod{\nabla\psi(\x) - \nabla\psi(\y)}{\x - \y},
\end{align*}
for all $\x,\y\in\inter\dom\psi$ and $(\x,\w),(\y,\z)\in\gra \M$. Also, it is maximal, \ie,
there exists no $\gamma$-weakly monotone operator,
with respect to $\psi$,
that its graph
properly contains $\gra \M$.
\end{assumption}

Then, under Assumption~\ref{assum:M,weak,adaptable},
the BPP update~\eqref{eq:bppm} 
with $\nabla h = \frac{1}{\tau}\nabla\psi - \nabla\phi$
is well-defined for $\frac{1}{\tau} - L > \gamma$.
\begin{lemma}
\label{lem:nonlinear,single}
Let $\M$ satisfy Assumption~\ref{assum:M,weak,adaptable} 
for some $\psi$ and $\gamma\ge0$,
and $h$ 
be a Legendre function and $\mu_h$-strongly convex with respect to $\psi$, \ie,
\begin{align*}
\inprod{\nabla h(\x) - \nabla h(\y)}{\x - \y} 
\ge \mu_h\inprod{\nabla\psi(\x) - \nabla\psi(\y)}{\x - \y},
\end{align*}
for all $\x,\y\in\inter\dom \psi$.
Then, if $\mu_h > \gamma$, 
the $h$-resolvent $\R$ is single-valued on $\inter\dom h$. 
\end{lemma}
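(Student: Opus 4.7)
The plan is to mimic the Euclidean proof of Lemma~\ref{lem:single} with the Bregman-type quantity $\inprod{\nabla\psi(\cdot)-\nabla\psi(\cdot)}{\cdot-\cdot}$ playing the role of $\|\cdot\|^2$ throughout. Concretely, I would split the $h$-resolvent expression as
\begin{align*}
\nabla h + \M \;=\; (\nabla h - \gamma\nabla\psi) \,+\, (\M + \gamma\nabla\psi),
\end{align*}
so that the hypothesis $\mu_h>\gamma$ forces the first summand to be the gradient of a Legendre function (namely $h-\gamma\psi$, which inherits essential smoothness and is strictly convex because it is $(\mu_h-\gamma)$-strongly convex with respect to $\psi$ and $\psi$ is itself strictly convex as a Legendre function), while Assumption~\ref{assum:M,weak,adaptable} guarantees that the second summand is maximally monotone.

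For existence of $\R\x$, I would invoke a non-Euclidean surjectivity result for sums of a Legendre gradient and a maximal monotone operator — the natural analog of the Corollary 2.3 in~\cite{bauschke:10:grf} cited in Lemma~\ref{lem:single} — to conclude that $\ran(\nabla h+\M)\supseteq\ran(\nabla h)$, so that for every $\x\in\inter\dom h$ there exists at least one $\y$ with $\nabla h(\x)\in(\nabla h+\M)\y$, i.e., $\y\in\R\x$. This is the step I expect to be the main obstacle, since the transition from the Euclidean range result to the $\psi$-setting requires one to check carefully that the hypotheses (coercivity/essential smoothness of $h-\gamma\psi$, maximality of $\M+\gamma\nabla\psi$) of the invoked surjectivity theorem are met on $\inter\dom\psi\cap\inter\dom h$.

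For uniqueness, assume $\y,\z\in\R\x$, so that $\nabla h(\x)-\nabla h(\y)\in\M\y$ and $\nabla h(\x)-\nabla h(\z)\in\M\z$. Subtracting and pairing with $\y-\z$, the $\nabla h(\x)$ terms cancel, and Assumption~\ref{assum:M,weak,adaptable} applied to these two pairs yields
\begin{align*}
-\inprod{\nabla h(\y)-\nabla h(\z)}{\y-\z}\;\ge\;-\gamma\inprod{\nabla\psi(\y)-\nabla\psi(\z)}{\y-\z}.
\end{align*}
Combining this with the $\mu_h$-strong convexity of $h$ with respect to $\psi$ gives
\begin{align*}
\mu_h\inprod{\nabla\psi(\y)-\nabla\psi(\z)}{\y-\z}\;\le\;\gamma\inprod{\nabla\psi(\y)-\nabla\psi(\z)}{\y-\z},
\end{align*}
and $\mu_h>\gamma$ forces $\inprod{\nabla\psi(\y)-\nabla\psi(\z)}{\y-\z}\le 0$. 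Since $\psi$ is Legendre, hence strictly convex, this monotonicity inequality is $0$ only when $\y=\z$, which completes the argument.
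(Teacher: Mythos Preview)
Your proposal is correct and follows essentially the same approach as the paper: the same decomposition $\nabla h+\M=(\nabla h-\gamma\nabla\psi)+(\M+\gamma\nabla\psi)$, the same appeal to Corollary~2.3 of~\cite{bauschke:10:grf} for surjectivity, and the same uniqueness argument via the chain of inequalities. The only minor difference is that you anticipate needing a ``non-Euclidean analog'' of the surjectivity result, whereas the paper invokes Corollary~2.3 directly---that result is already stated for general Legendre functions and maximal monotone operators, so no additional generalization is required.
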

\begin{proof}
	Note that $\nabla h + \M = (\nabla h -\gamma\nabla\psi) + (\M+\gamma\nabla\psi)$. 
	From the condition that $\mu_h > \gamma$ and $\M$ is $\gamma$-weakly monotone
	with respect to $\psi$, 
	it is straightforward to show that $h-\gamma\psi$ is a Legendre function, 
	and $\M+\gamma\nabla\psi$ is maximally monotone. Then Corollary 2.3 in \cite{bauschke:10:grf} shows that $\ran(\nabla h + \M) = \reals^d$. 
	This implies that $\R \x$ is nonempty for all $\x\in\inter\dom h$.
	Assume that $\y,\z \in \R\x$.
	Since 
	$\nabla h(\x)-\nabla h(\y) \in \M\w$
	and $\nabla h(\x) - \nabla h(\z) \in \M\z$, we have the following inequality:
	\begin{align*}
	-\gamma\inprod{\nabla \psi(\y) - \nabla\psi(\z)}{\y-\z}
	&\le -\inprod{\nabla h(\y)-\nabla h(\z)}{\y-\z} \\
	&\le - \mu_h \inprod{\nabla \psi(\y) - \nabla\psi(\z)}{\y-\z}.
	\end{align*}
	So if $\mu_h > \gamma$,
	the inequality implies that $\y = \z$.
\end{proof}

\subsubsection{Non-Euclidean SA-MGDA method from BPP}

The resulting BPP method for smooth adaptable problems, 
named the non-Euclidean SA-MGDA,
uses mirror descent steps~\cite{nemirovski:83}
with respect to $\psi$ (see also~\cite{bauschke:16:adl,bolte:18:fom}),
instead of the gradient steps in the standard SA-MGDA.
This simplifies, for a separable $\psi(\u,\vv) = \psi_{\u}(\u) + \psi_{\vv}(\vv)$, as below, 
which reduces to the standard SA-MGDA~\eqref{eq:uv} 
when $\psi(\x) = \frac{1}{2}||\x||^2$:
\begin{align}
			    \u_{k+1} &= \argmin{\u\in 
			    \reals^{d_u}}
			    \Big\{\phi(\u_k,\vv_k) + 
\inprod{\nabla_{\u}\phi(\u_k,\vv_k)}{\u - \u_k} + \frac{1}{\tau}D_{\psi_{\u}}(\u,\u_k) + f(\u)\Big\}
			    \label{eq:ne_samgda}
			    \\
\vv_{k+1} &= \argmax{\vv\in 
    \reals^{d_v}} 
    \Big\{2\phi(\u_{k+1},\vv) \\
&\hspace{55pt}    
-    \phi(\u_k,\vv_k)
- \inprod{\nabla_{\vv}\phi(\u_k,\vv_k)}{\vv- \vv_k} - \frac{1}{\tau}D_{\psi_{\vv}}(\vv,\vv_k) 
- g(\vv)\Big\} \nonumber \\
&= \argmax{\vv\in 
\reals^{d_v}}  
\Big\{
2\phi(\u_{k+1},\vv) - \frac{1}{\tau}D_{\psi_{\vv}}(\vv,\tilde{\vv}_k)
 - g(\vv)\Big\}, \nonumber
\end{align}
where
$\tilde{\vv}_k := \nabla\psi_{\vv}^*(\nabla\psi_{\vv}(\vv_k) - \tau\nabla_{\vv}\phi(\u_k,\vv_k))$
is an \emph{anchor} point
and $\psi_{\vv}^*$ is the conjugate of $\psi_{\vv}$.
Note that the analogous standard non-Euclidean MGDA
that considers $\vv_k$, instead of $\tilde{\vv}_k$, in~\eqref{eq:ne_samgda}
does not have a convergence guarantee,
unlike this method under the considered setting.
The minimization in $\u$ can be solved by one proximal mirror descent update
(see~\cite{bauschke:16:adl,bolte:18:fom}),
while the maximization in $\vv$ needs multiple steps
of a proximal mirror descent update
under our underlying assumption that $\phi(\u,\vv)$ is relative smooth in $\vv$ with respect to $\psi_{\vv}$.

This method has a rate $O(1/k)$
under the MVI condition, \ie, Assumption~\ref{assum:weakmvi} with $\rho=0$. 
%
\begin{theorem}
Let $\M$~\eqref{eq:M} of the composite problem~\eqref{eq:P} 
satisfy Assumption~\ref{assum:weakmvi} for $\rho=0$
and Assumption~\ref{assum:M,weak,adaptable} for some $\gamma\ge0$, 
and let $\phi$ satisfy Assumption~\ref{assum:smooth,adaptable}.
Then, the sequence $\{\x_k\}$ 
of the non-Euclidean SA-MGDA (with $J=\infty$)
satisfies, for $k\ge1$, $\tau\in\big(0,\frac{1}{L+\gamma}\big)$
and for any $\x_*\in X_*^0(\M)$,
$ 
\min_{i=1,\ldots,k} D_h(\x_i,\x_{i-1})
\le \frac{D_h(\x_*,\x_0)}{k}
.$ 
\end{theorem}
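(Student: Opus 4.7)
The plan is to follow exactly the template of Theorem~\ref{thm:bppm,weakmvi} specialized to $\rho=0$, but now using the non-Euclidean ingredients developed in this section (Assumption~\ref{assum:M,weak,adaptable} and Lemma~\ref{lem:nonlinear,single}) in place of their Euclidean counterparts. The non-Euclidean SA-MGDA is simply the BPP iteration $\x_{k+1}=\R \x_k$ applied to the particular Legendre function $h=\frac{1}{\tau}\psi-\phi$, so the goal reduces to (i) checking that the $h$-resolvent is well-defined, and (ii) proving the $\rho=0$ version of Lemma~\ref{lem:expansive,weakmvi} for a general Legendre $h$ (no Euclidean smoothness is needed when $\rho=0$).

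First, I would verify that $\R\x$ is single-valued on $\inter\dom h$. By Assumption~\ref{assum:smooth,adaptable}, both $L\psi-\phi$ and $L\psi+\phi$ are convex, which immediately yields that $h=\frac{1}{\tau}\psi-\phi$ is Legendre and $\big(\frac{1}{\tau}-L\big)$-strongly convex with respect to $\psi$. The assumption $\tau\in\big(0,\frac{1}{L+\gamma}\big)$ gives $\mu_h=\frac{1}{\tau}-L>\gamma$, so Lemma~\ref{lem:nonlinear,single} applies and $\R\x_k$ exists uniquely at every step.

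Second, I would establish the descent inequality
\[
D_h(\x_*,\R\x)\;\le\;D_h(\x_*,\x)-D_h(\R\x,\x) \qquad\forall\,\x_*\in X_*^0(\M).
\]
The argument mirrors Lemma~\ref{lem:expansive,weakmvi}: the definition of $\R\x$ gives $\nabla h(\x)-\nabla h(\R\x)\in \M(\R\x)$, and Assumption~\ref{assum:weakmvi} with $\rho=0$ yields
\[
0\;\le\;\inprod{\nabla h(\x)-\nabla h(\R\x)}{\R\x-\x_*}.
\]
Expanding the right-hand side by the three-point identity for Bregman distances,
\[
\inprod{\nabla h(\x)-\nabla h(\R\x)}{\R\x-\x_*}=-D_h(\R\x,\x)+D_h(\x_*,\x)-D_h(\x_*,\R\x),
\]
which holds for any Legendre $h$ and requires no Euclidean structure, gives the claim. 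Notice that the $\rho L_h\,D_h(\R\x,\x)$ correction term that appeared in Lemma~\ref{lem:expansive,weakmvi} vanishes because $\rho=0$, so I never need a smoothness assumption on $h$ with respect to $\psi$.

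Finally, setting $\x=\x_{i-1}$ and $\R\x=\x_i$ and summing from $i=1$ to $k$ telescopes the $D_h(\x_*,\cdot)$ terms, yielding
\[
\sum_{i=1}^{k}D_h(\x_i,\x_{i-1})\;\le\;D_h(\x_*,\x_0)-D_h(\x_*,\x_k)\;\le\;D_h(\x_*,\x_0).
\]
Dividing by $k$ and bounding the average below by the minimum gives the stated $O(1/k)$ rate. The only delicate point is checking that the three-point identity and the single-valuedness argument transfer cleanly to the non-Euclidean regime; both are routine once Lemma~\ref{lem:nonlinear,single} is in hand, so I do not anticipate a significant obstacle beyond bookkeeping.
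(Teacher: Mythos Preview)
Your proposal is correct and follows essentially the same route as the paper: invoke Lemma~\ref{lem:nonlinear,single} with $\mu_h=\tfrac{1}{\tau}-L>\gamma$ for well-definedness, specialize the proof of Lemma~\ref{lem:expansive,weakmvi} to $\rho=0$ (so the $L_h$-smoothness term drops out and the three-point identity suffices for any Legendre $h$), and then telescope exactly as in Theorem~\ref{thm:bppm,weakmvi}. The paper's proof is just a more compressed statement of the same three steps.
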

\begin{proof}
By Lemma~\ref{lem:nonlinear,single}, $\mu_h = \frac{1}{\tau} - L > \gamma$
implies that $\R\x$ exists for any $\x$.
The proof of Lemma~\ref{lem:expansive,weakmvi} 
also works for $\rho=0$ and any Legendre function $h$, \ie,
$ 
0 \le D_h(\x_*,\x_i) - D_h(\x_*,\x_{i+1}) - D_h(\x_{i+1},\x_i)
$ 
for all $i\ge0$.
Then, 
based on 
the proof of Theorem~\ref{thm:bppm,weakmvi}.
we have
$ 
\min_{i=1,\ldots,k}D_h(\x_i,\x_{i-1})
\le \frac{1}{k}\sum_{i=1}^k D_h(\x_i,\x_{i-1})
\le \frac{D_h(\x_*,\x_0)}{k}
$. 
\end{proof}

\comment{\cred
\subsubsection{Application to Poisson linear inverse problems on a simplex}

Let us consider a Poisson linear inverse problem~\cite{bauschke:16:adl} on the simplex domain:
    \begin{align*}
        \min_{\u\in\Delta^n} \sum_{j=1}^m \Big\{b_j \log \frac{b_j}{(A\u)_j} + (A\u)_j - b_j\Big\},
    \end{align*}
    where $\Delta^n:=\{\u=(u_1,\ldots,u_n)\in\reals_+^n:\sum_{j=1}^n u_j = 1\}$, $A\in\reals_+^{m\times n}$, and $b\in\reals_{++}^m$. This example can be rewritten as the following minimax problem.
    \begin{align}\label{ex:plip}
        \min_{\u\in\reals_+^n}\max_{\vv\in\reals} \phi(\u,v):= \sum_{j=1}^m \Big\{b_j \log \frac{b_j}{(A\u)_j} + (A\u)_j - b_j\Big\} + \vv\Big(\sum_{j=1}^n u_j - 1\Big).
    \end{align}
    Then, $\phi$ is $L$-smooth with respect to $\psi(\u,\vv) := -\sum_{j=1}^n \log u_j + \frac{1}{2}\vv^2$ for $L=\sum_{j=1}^m b_j$ on its domain $(\u,\vv)\in \reals_+^n\times\reals$.

Let us solve \eqref{ex:plip} using the non-Euclidean SA-MGDA (the BPP method with $h= \frac{1}{\tau}\psi - \phi$) in~\eqref{eq:ne_samgda}. 
Then one can easily get that $\nabla_{\u}\psi(\u,\vv) = -\u^{-1}$, $\nabla_{\vv} \psi(\u,\vv) = \vv$, $\nabla_{\u}\phi(\u,\vv) = \sum_{j=1}^n\Big(1-\frac{b_j}{\inprod{a_j}{\u}}\Big)a_j + \vv e$, and $\nabla_{\vv}\phi(\u,\vv)= \inprod{e}{\u} - 1$, where $\u^{-1}:=(u_1^{-1},\ldots,u_n^{-1})$, $a_j$ is the $j$-th row vector of $A$, and $e$ is a vector with all entries equal to $1$.
Thus, the corresponding non-Euclidean SA-MGDA is
\begin{align*}
    \u_{i+1} &= \Big[\u_i^{-1} + \tau\Big(\sum_{j=1}^n\Big(1-\frac{b_j}{\inprod{a_j}{\u_i}}\Big)a_j + \vv_i e\Big)\Big]^{-1}\\
    \vv_{i+1} &= \vv_i + \tau\Big(2(\inprod{e}{\u_{i+1}} - 1) - (\inprod{e}{\u_i} - 1)\Big).
\end{align*}
}

\section{Numerical results}
\label{sec:result}
This section compares our SA-MGDA method
with the MGDA~\cite{nouiehed:19:sac}
on two toy experiments
and one realistic fair training experiment\footnote{The code is available at \url{https://github.com/csfh1379/sa-mgda}.}\!\!.


\subsection{Toy examples}
We first consider 
\begin{align*}
\phi(u,v) = -\frac{\rho L^2}{4}u^2 + L\sqrt{1-\frac{\rho^2L^2}{4}}uv + \frac{\rho L^2}{4}v^2
,\end{align*}
with $f=g=0$.
Its saddle-subdifferential $\M_\phi$ satisfies the $L$-Lipschitz continuity and the $\rho$-weak MVI condition, where $L=1$ and $\rho=\frac{1}{4}$.
The problem is nonconcave on $\vv$, 
so directly applying MGDA is not guaranteed to work.
For a practical comparison,
we apply the MGDA~\cite{nouiehed:19:sac} 
to a regularized function
$\phi(u,v) -\frac{\lambda}{2}(v-v_0)^2$ 
with $\lambda > L_{vv} = \frac{\rho L^2}{2} = \frac{1}{8}$,
so that it becomes strongly concave on $v$.
(This stems from the strategy taken in~\cite{nouiehed:19:sac}
when the function is concave in $v$.)
Figure~\ref{fig:first_toy_example} 
illustrates that SA-MGDA (with $\lambda=0$)
outperforms 
the MGDA~\cite{nouiehed:19:sac} with $\lambda=1,\; \frac{1}{2},\; \frac{1}{4}$, 
for $(u_0,v_0)=(0,1)$.
Here, the maximization on $v$ is computed exactly
for all methods.

Next, we consider another toy example in \cite[Example~3]{pethick:22:elc}:
\begin{align*}
    \M_\phi(u,v) = (\psi(u,v)-v,\psi(v,u)+u),
\end{align*}
where 
$\psi(u,v)=\frac{1}{8}u(-1 + u^2 + v^2)(-1+4u^2+4v^2)$,
with $f(u) = \delta_{\{x\;:\;|x|\le1.1\}}(u)$
and $g(v) = \delta_{\{x\;:\;|x|\le1.1\}}(v)$.
The operator $\M_{\phi}$ satisfies $L$-Lipschitz continuity 
(within the domain)
and $\rho$-weak MVI condition,
where $\rho<\frac{1}{4L}$\footnote{$L=\frac{\sqrt{5449798437 - 173756\sqrt{890712929}}}{10000}\approx1.6251$ and $\rho = \frac{2304}{16465}\approx0.1399$.}\!\!.
For a practical comparison,
we apply the MGDA to a regularized operator
$\M_\phi(u,v) + \lambda(0,v-v_0)$.
Figure~\ref{fig:second_toy_example} 
presents that the SA-MGDA (with 
$\lambda=0$)
outperforms 
the MGDA~\cite{nouiehed:19:sac} with $\lambda=0,\; \frac{L}{4},\; L$. 
Here, 
the maximization on $v$ is computed inexactly
with
$J=10$ number of projected gradient ascent steps
for all methods. 

\subsection{Fair classification}\label{sec:fair}

To ensure that the trained model is fair to all categories, 
\cite{mohri:19:afl} considered a minimax problem 
that minimizes the maximum loss among the categories.
We study such fair classification experiment
in \cite{nouiehed:19:sac}
on the Fashion MNIST data set\footnote{
This consists of 28$\times$28 grayscale cloth images of ten categories;
60000 data for training and 10000 for test.}
\cite{xiao:17:fma}.
Similar to \cite{mohri:19:afl,nouiehed:19:sac}, 
we focus on the data labeled as T-shirt/top, Coat, and Shirt. 
The corresponding minimax problem is 
\begin{align*}
\min_{\u} \max_{i=1,2,3} \mathcal{L}_i(\u)
,\end{align*}
where $\u$ 
denotes the parameters of the neural network
(see Appendix~\ref{appx:network} for the 
details), 
and $\mathcal{L}_1$, $\mathcal{L}_2$, and $\mathcal{L}_3$ 
denote the cross-entropy losses\footnote{
Suppose $i$ has $n$
data $\{\x_{i,j}\}_{j=1}^n$.
Then, the corresponding cross-entropy loss is defined as
$\mathcal{L}_i(\u) = -\frac{1}{n}\sum_{j=1}^n
\log f_{\u}^{(i)}(\x_{i,j})$,
where $f_{\u}^{(i)}$ is the $i$-th entry 
of the neural network $f_{\u}$
that learns the probability
of the data to be in the class $i$.
}
of the training data in each category, respectively.
This is equivalent to the problem\footnote{ 
The problem 
is highly nonconvex and nonsmooth on $\w$, 
while it is concave on $\ttt$.
Therefore, the purpose of this experiment is to investigate whether or not our theoretical understanding
expands to real-world problems.}
\begin{align*}
\min_{\u} \max_{\vv\in \Vs}\sum_{i=1}^3 v_i \mathcal{L}_i(\u)
,\end{align*}
where 
$\Vs=\big\{\vv\in\reals_+^3\;:\;
\sum_{i=1}^3 v_i = 1\big\}$,
\ie, $\phi(\u,\vv) = \sum_{i=1}^3 v_i \mathcal{L}_i(\u)$ with $f=0$ and $g(\vv)=\delta_{\Vs}(\vv)$.
Since the problem 
is not strongly concave in $\vv$, 
\cite{nouiehed:19:sac} applied the MGDA to a regularized problem
$ 
\min_{\u} \max_{\vv\in \Vs}\sum_{i=1}^3 v_i \mathcal{L}_i(\u) - \frac{\lambda}{2}\sum_{i=1}^3 v_i^2
$, 
where $\lambda$ is a positive regularization parameter.

\begin{figure}[t!]
     \centering
     \begin{subfigure}[b]{0.48\textwidth}
         \centering
         \includegraphics[width=\textwidth]{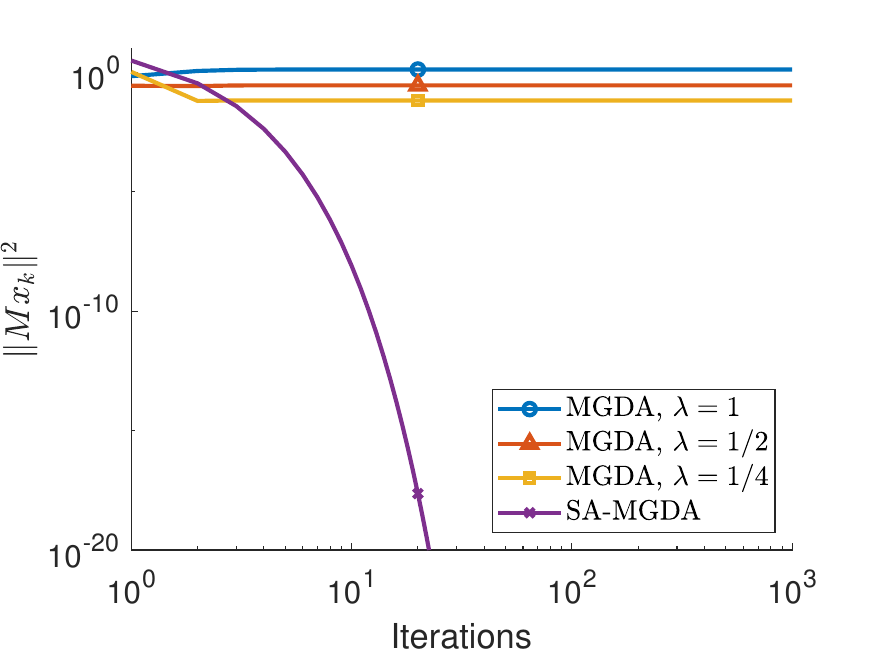}
         \caption{First toy example}
         \label{fig:first_toy_example}
     \end{subfigure}
     \hfill
     \begin{subfigure}[b]{0.48\textwidth}
         \centering
         \includegraphics[width=\textwidth]{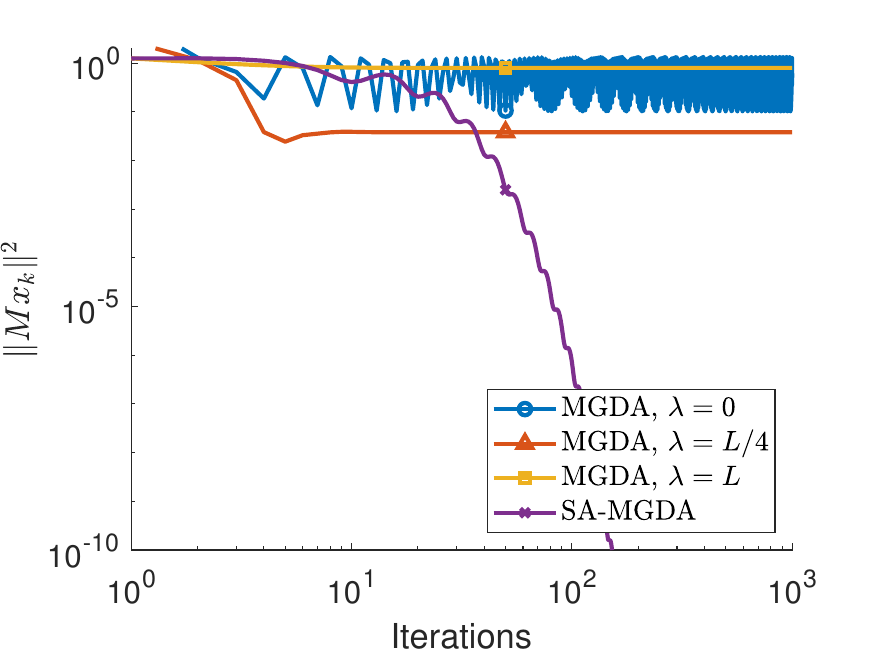}
         \caption{Second toy example}
         \label{fig:second_toy_example}
     \end{subfigure}
        \caption{Toy examples}
        \label{fig:result}
\end{figure}


\begin{figure}[ht!]
	\centering
	\includegraphics[width=.48\columnwidth]{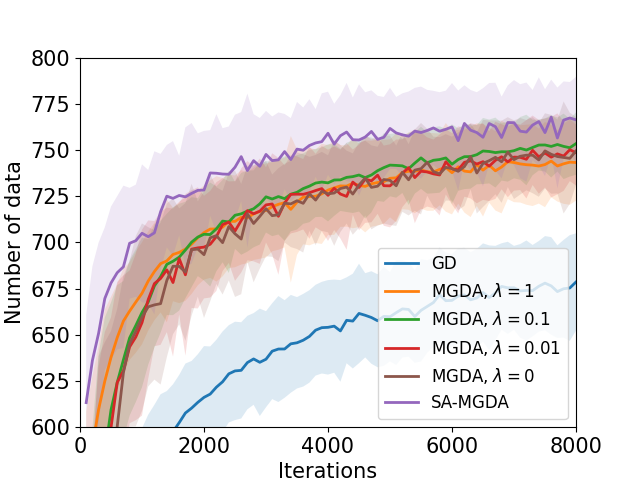}
	\includegraphics[width=.48\columnwidth]{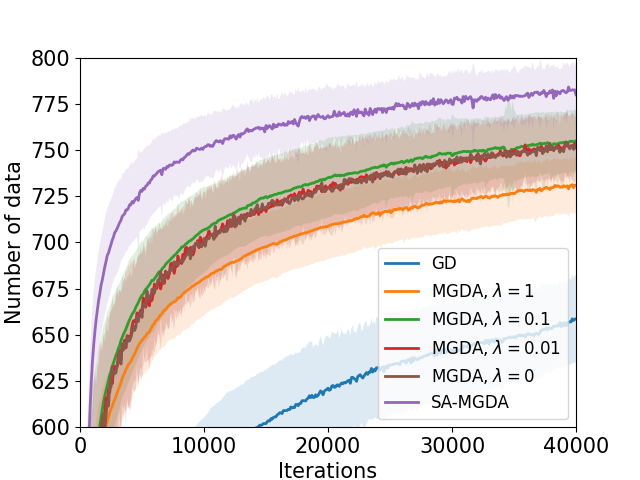}
	\caption{Fair classification: the number of correctly classified test data
		for the worst category vs. iteration. (Left) $\tau=0.01$, (Right) $\tau = 0.001$}
	\label{fig:fair}
\end{figure}

We ran 8000 iterations of the MGDA and SA-MGDA methods 
with the learning rate $\tau = 0.01$,
where the maximization on $\vv$ 
is computed exactly.
For the MGDA, we considered various regularization parameters $\lambda = 0, 0.01, 0.1, 1$.
We also ran a normal training,
$\min_{\u}\sum_{i=1}^3\frac{1}{3}\mathcal{L}_i(\u)$,
using a gradient descent (GD) method on $\u$ 
for comparison.
We performed 50 independent simulations for each case,
and, in Table \ref{table:fmnist}, we report the mean and standard deviation
of the number of correctly classified test data (out of 1000) 
for each category and the worst\footnote{
The worst category denotes the smallest number of correctly classified test data among the three categories.}
category.
Fig.~\ref{fig:result} plots 
the results of
the worst category versus iterations.

Table \ref{table:fmnist} presents that
the model learned by the SA-MGDA method has the best performance,
in terms of the worst category, yielding the most fairness,
even without an explicit regularization and its parameter tuning.
%
In addition, the learned model with the SA-MGDA method has the best accuracy for two categories
and the second best 
for one category among fair trainings.

\begingroup
\begin{table*}[h!]
\setlength{\tabcolsep}{4.0pt}
\renewcommand*{\arraystretch}{0.7}
\centering
\caption{Fair classification: the mean and standard deviation of the number of correctly classified test data for normal and fair trainings
($\tau = 0.01$).
}
\label{table:fmnist}
\begin{tabular}{c c c| cc cc cc |cc}
	\toprule[1pt]
	& \multirow{2}{*}{Method} & \multirow{2}{*}{$\lambda$} & \multicolumn{2}{c}{T-shirt/top} & \multicolumn{2}{c}{Coat} & \multicolumn{2}{c|}{Shirt} & \multicolumn{2}{c}{\textbf{Worst}} \\
	& & & mean & std & mean & std & mean & std & mean & std\\
	\midrule[1pt]
	Normal & GD & $\cdot$ & 852.5 & 12.3 & 855.5 & 18.4 & 678.6 & 26.3 & 678.6 & 26.3\\
	\midrule[0.6pt]
	\multirow{5}{*}{Fair} 
	& \multirow{4}{*}{MGDA} & $1$ & \textbf{812.7} & 16.8 & 813.0 & 25.6 & 743.8 & 21.4 & 743.2 & 20.6\\
	& & $0.1$ & 783.4 & 20.3 & 781.4 & 33.0 & 770.3 & 21.8 & 753.5 & 16.9\\
	& & $0.01$ & 778.3 & 28.0 & 772.0 & 29.4 & 773.4 & 25.6 & 748.5 & 17.7\\
	& & $0$ & 777.6 & 27.8 & 778.2 & 35.3 & 772.1 & 21.4 & 749.5 & 18.3\\
    \cmidrule[0.4pt]{2-11}
	& SA-MGDA & $\cdot$ & 807.9 & 33.2 & \textbf{822.2} & 29.7 & \textbf{781.7} & 33.1 & \textbf{766.4} & 23.2\\
	\bottomrule[1pt]
\end{tabular}
\end{table*}
\endgroup

We present an additional numerical result with smaller learning rate, $\tau=0.001$. The other settings of experiment are equivalent to 
the case $\tau=0.01$,
except that we ran 40000 iterations for each case.
\begingroup
\setlength{\tabcolsep}{4.0pt}
\renewcommand*{\arraystretch}{0.7}
\begin{table*}[h!]
\centering
	\caption{Fair classification: the mean and standard deviation of the number of correctly classified test data for normal and fair trainings. ($\tau = 0.001$)
	}
	\label{table:fmnist_0.001}
	\begin{tabular}{c c c| cc cc cc |cc}
		\toprule[1pt]
		& \multirow{2}{*}{Method} & \multirow{2}{*}{$\lambda$} & \multicolumn{2}{c}{T-shirt/top} & \multicolumn{2}{c}{Coat} & \multicolumn{2}{c|}{Shirt} & \multicolumn{2}{c}{\textbf{Worst}} \\
		& & & mean & std & mean & std & mean & std & mean & std\\
		\midrule[1pt]
		Normal & GD & $\cdot$ & 849.2 & 9.8 & 843.2 & 17.8 & 658.6 & 23.5 & 658.6 & 23.5\\
		\midrule[0.6pt]
		\multirow{5}{*}{Fair} & \multirow{4}{*}{MGDA} & $1$ & \textbf{805.8} & 14.3 & 797.2 & 20.2 & 731.0 & 14.8 & 731.0 & 14.8\\
		& & $0.1$ & 780.0 & 12.8 & 766.5 & 23.0 & 763.4 & 15.7 & 755.0 & 16.4\\
		& & $0.01$ & 772.0 & 15.6 & 761.3 & 21.1 & 767.7 & 14.5 & 753.0 & 14.7\\
		& & $0$ & 775.2 & 14.9 & 761.4 & 24.0 & 767.0 & 13.0 & 752.9 & 18.9\\
		\cmidrule[0.4pt]{2-11}
		& SA-MGDA & $\cdot$ & 794.5 & 17.4 & \textbf{805.8} & 21.1 & \textbf{789.9} & 18.6 & \textbf{779.9} & 18.0\\
		\bottomrule[1pt]
	\end{tabular}
\end{table*}
\endgroup

\comment{
\begin{figure}[h]
\centering
\includegraphics[width=.7\columnwidth]{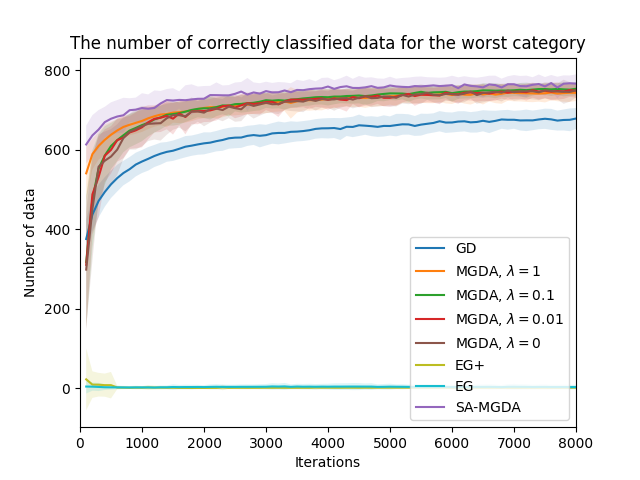}
\caption{The number of correctly classified test data
for the worst category vs. iteration. ($\tau = 0.01$)}
\end{figure}

\begin{figure}[h]
\centering
\includegraphics[width=.7\columnwidth]{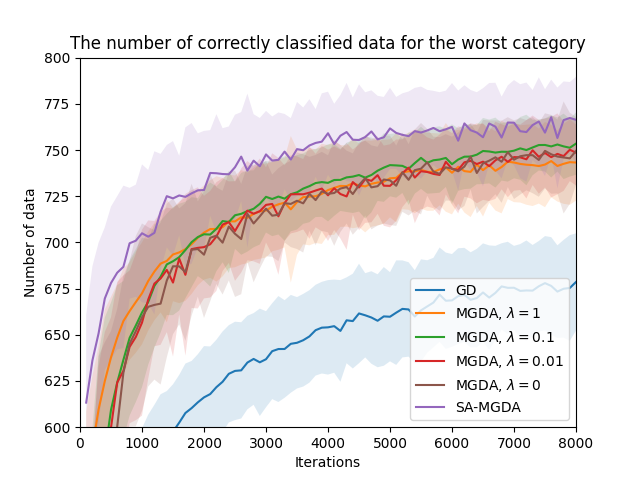}
\caption{The number of correctly classified test data
for the worst category vs. iteration. ($\tau = 0.01$)}
\label{fig:rate}
\end{figure}
}



\section{Conclusion}
\label{sec:conc}

This paper proposed a semi-anchoring approach
to a multi-step gradient descent ascent method
for structured nonconvex-nonconcave smooth minimax problems.
This is a new instance of the Bregman proximal point method 
for operators under the weak MVI condition.
%
We showed that the proposed method guarantees convergence
without regularization and its parameter tuning,
unlike the smoothing technique.
We further studied its backtracking line-search version,
and its non-Euclidean version for smooth adaptable functions.
Numerical experiments suggest that the proposed approach has potential to improve
training dynamics of real-world minimax problems.
We leave extending this work
to a more general stochastic nonconvex-nonconcave setting as future work.

\comment{
\begin{ack}
Use unnumbered first level headings for the acknowledgments. All acknowledgments
go at the end of the paper before the list of references. Moreover, you are required to declare
funding (financial activities supporting the submitted work) and competing interests (related financial activities outside the submitted work).
More information about this disclosure can be found at: \url{https://neurips.cc/Conferences/2021/PaperInformation/FundingDisclosure}.

Do {\bf not} include this section in the anonymized submission, only in the final paper. You can use the \texttt{ack} environment provided in the style file to autmoatically hide this section in the anonymized submission.
\end{ack}
}

\appendix

\section{Proof of Lipschitz continuity of $\nabla \phi$ and ${\bf M}_\phi$}
\label{appx:lipschitz}
Let $\x:=(\u,\vv)$ and $\y:=(\bar{\u},\bar{\vv})$. 
Since $\|\nabla \phi(\u,\vv)-\nabla \phi(\bar{\u},\bar{\vv})\|=\|\M_\phi\x-\M_\phi\y\|$, 
it is enough to show that 
there exists a constant $L>0$ such that
$\|\nabla \phi(\u,\vv) - \nabla \phi(\bar{\u},\bar{\vv})\|\le L\|\x-\y\|$. 

By Assumption~\ref{assum:smooth}, we have the following bounds
\begin{align*}
\|\nabla \phi(\u,\vv)-\nabla \phi(\bar{\u},\vv)\|^2
&= \|\nabla_{\u} \phi(\u,\vv)-\nabla_{\u} \phi(\bar{\u},\vv)\|^2 + \|\nabla_{\vv} \phi(\u,\vv)-\nabla_{\vv} \phi(\bar{\u},\vv)\|^2 \\
&
\le (L_{\u\u}^2+L_{\vv\u}^2)\|\u-\bar{\u}\|^2, \\
\|\nabla \phi(\bar{\u},\vv)-\nabla \phi(\bar{\u},\bar{\vv})\|^2
&= \|\nabla_{\u} \phi(\bar{\u},\vv)-\nabla_{\u} \phi(\bar{\u},\bar{\vv})\|^2 + \|\nabla_{\vv} \phi(\bar{\u},\vv)-\nabla_{\vv} \phi(\bar{\u},\bar{\vv})\|^2 \\
&
\le (L_{\u\vv}^2+L_{\vv\vv}^2)\|\vv-\bar{\vv}\|^2.
\end{align*}
Then, we can show the Lipschitz continuity as below.
\begin{align*}
\|\nabla \phi(\u,\vv)-\nabla \phi(\bar{\u},\bar{\vv})\| 
&\le \|\nabla \phi(\u,\vv)-\nabla \phi(\bar{\u},\vv)\| + \|\nabla \phi(\bar{\u},\vv)-\nabla \phi(\bar{\u},\bar{\vv})\| \\
&\le \sqrt{L_{\u\u}^2+L_{\vv\u}^2}\|\u-\bar{\u}\|+\sqrt{L_{\u\vv}^2+L_{\vv\vv}^2}\|\vv-\bar{\vv}\|\\
&\le \sqrt{L_{\u\u}^2+L_{\vv\u}^2+L_{\u\vv}^2+L_{\vv\vv}^2}\sqrt{\|\u-\bar{\u}\|^2+\|\vv-\bar{\vv}\|^2}\\
&\qquad \qquad(\because \text{Cauchy-Schwarz  inequality})\\
&= \sqrt{L_{\u\u}^2+L_{\vv\u}^2+L_{\u\vv}^2+L_{\vv\vv}^2}\|\x-\y\|.
\end{align*}
\qed

\section{Proof of weak monotonicity of ${\bf M}$}
\label{appx:weak}
By Assumption~\ref{assum:smooth},
$\phi(\cdot,\vv)$ is $L_{\u\u}$-weakly convex for fixed $\vv$,
and $-\phi(\u,\cdot)$ is $L_{\vv\vv}$-weakly convex for fixed $\u$.
Then, using the weak convexity on $\u$, we have
\begin{align*}
\phi(\bar{\u},\vv) &\ge \phi(\u,\vv) + \inprod{\nabla_{\u}\phi(\u,\vv)}{\bar{\u} - \u}
- \frac{L_{\u\u}}{2}||\bar{\u} - \u||^2, \\
\phi(\u,\bar{\vv}) &\ge \phi(\bar{\u},\bar{\vv}) + \inprod{\nabla_{\u}\phi(\bar{\u},\bar{\vv})}{\u - \bar{\u}}
- \frac{L_{\u\u}}{2}||\u - \bar{\u}||^2,
\end{align*}
for all $\u,\bar{\u}\in\reals^{d_u}$ and $\vv,\bar{\vv}\in\reals^{d_v}$.
Similarly, using the weak convexity on $\vv$, we have
\begin{align*}
-\phi(\u,\bar{\vv}) &\ge -\phi(\u,\vv) - \inprod{\nabla_{\vv}\phi(\u,\vv)}{\bar{\vv} - \vv}
- \frac{L_{\vv\vv}}{2}||\bar{\vv} - \vv||^2, \\
-\phi(\bar{\u},\vv) &\ge -\phi(\bar{\u},\bar{\vv}) - \inprod{\nabla_{\vv}\phi(\bar{\u},\bar{\vv})}{\vv - \bar{\vv}}
- \frac{L_{\vv\vv}}{2}||\vv - \bar{\vv}||^2
.\end{align*}
%
Let $\x=(\u,\vv)$, $\y=(\bar{\u},\bar{\vv})$, $\w_{\phi}=(\nabla_{\u}\phi(\u,\vv),-\nabla_{\vv}\phi(\u,\vv))$,
and $\z_{\phi}=(\nabla_{\u}\phi(\bar{\u},\bar{\vv}),$ 
$-\nabla_{\vv}\phi(\bar{\u},\bar{\vv}))$.
Then, 
summing the above four inequalities yields
\begin{align*}
\inprod{\x - \y}{\w - \z} 
&\ge \inprod{\x - \y}{\w_{\phi} - \z_{\phi}}
\ge -L_{\u\u}||\u-\bar{\u}||^2 - L_{\vv\vv}||\vv - \bar{\vv}||^2 \\
&\ge -\max\{L_{\u\u},L_{\vv\vv}\}||\x - \y||^2,
\end{align*}
for all $(\x,\w),(\y,\z)\in\gra\M$,
where the first inequality uses the convexity of $f$ and $g$.
\qed


\section{Proof of Proposition~\ref{prop:linear}}
\label{appx:linear}
When $f=g=0$, the sequence $\{\u_k,\vv_k\}$ generated by the PDHG 
has a relationship
\begin{align*}
\u_{k+1} - 2\u_k +\u_{k-1} = -\tau\B(\vv_k - \vv_{k-1})
= -\tau^2\B\B^\top(2\u_k - \u_{k-1})
.\end{align*}
This can be written in a matrix form
\begin{align*}
\left[\begin{array}{c}
\u_{k+1} \\
\u_k
\end{array}\right]
= \underbrace{\left[\begin{array}{cc}
	2(\I - \tau^2\B\B^\top) & -(\I - \tau^2\B\B^\top) \\
	\I & \zero
	\end{array}\right]}_{\T_{\u}}
\left[\begin{array}{c}
\u_k \\
\u_{k-1}
\end{array}\right]
.\end{align*}
The spectral radius of $\T_{\u}$, denoted by $\rho(\T_{\u})$,
determines the convergence rate of the PDHG.
In specific, for any $\epsilon > 0$,
there exists $K\ge0$ such that 
$[\rho(\T_{\u})]^k \le ||\T_{\u}^k|| \le [\rho(\T_{\u}) + \epsilon]^k$ 
for all $k\ge K$,
and this yields
\begin{align*}
||\hat{\u}_{k+1} - \hat{\u}_*||^2 \le (\rho(\T_{\u}) + \epsilon)^{2k}||\hat{\u}_1 - \hat{\u}_*||^2
,\end{align*}
where $\hat{\u}_k := (\u_k^\top\;\u_{k-1}^\top)^\top$
and $\hat{\u}_* := (\u_*^\top\;\u_*^\top)^\top = \zero$.


Considering the SVD factorization of $\B = \U\Sigma\V^\top$, 
the spectral radius of $\T_{\u}$ can be rewritten as
\begin{align*}
\rho(\T_{\u}) = \max_{\sigma_{\min}(\B)\le\sigma\le\sigma_{\max}(\B)} \rho(\T_{\u,\sigma}),
\quad\text{where }
\T_{\u,\sigma} := \left[\begin{array}{cc}
2(1 - \tau^2\sigma^2) & -(1 - \tau^2\sigma^2) \\
1 & 0
\end{array}\right]
.\end{align*}
The matrix
$\T_{\u,\sigma}$
has two complex eigenvalues $1-\tau^2\sigma^2 \pm i\sqrt{1-\tau^2\sigma^2 - (1-\tau^2\sigma^2)^2}$
with magnitude $\sqrt{1-\tau^2\sigma^2}$.
Therefore, we have $\rho(\T_{\u}) = \sqrt{1 - \tau^2\sigma_{\min}^2(\B)}$ and
\begin{align*}
||\hat{\u}_{k+1} - \hat{\u}_*||^2 
\le \left(\sqrt{1 - \tau^2\sigma_{\min}^2(\B)} + \epsilon\right)^{2k}||\hat{\u}_1 - \hat{\u}_*||^2
.\end{align*}

Similarly, the sequence $\{\vv_k\}$ generated by the PDHG has a relationship
\begin{align*}
\left[\begin{array}{c}
\vv_{k+1} \\
\vv_k
\end{array}\right]
= \left[\begin{array}{cc}
2(\I - \tau^2\B^\top\B) & -(\I - \tau^2\B^\top\B) \\
\I & \zero
\end{array}\right]
\left[\begin{array}{c}
\vv_k \\
\vv_{k-1}
\end{array}\right]
,\end{align*}
which then yields
\begin{align*}
||\hat{\vv}_{k+1} - \hat{\vv}_*||^2 
\le \left(\sqrt{1 - \tau^2\sigma_{\min}^2(\B)} + \epsilon\right)^{2k}||\hat{\vv}_1 - \hat{\vv}_*||^2
,\end{align*}
where $\hat{\vv}_k := (\vv_k^\top\;\vv_{k-1}^\top)^\top$
and $\hat{\vv}_* := (\vv_*^\top\;\vv_*^\top)^\top = \zero$.
Concatenating the results for $\hat{\u}_k$ and $\hat{\vv}_k$ 
concludes the proof.
\qed

\section{Proofs and derivations for Section~\ref{sec:saag}}

\subsection{Proof of Theorem~\ref{thm:sa-mgda,weakmvi,inexact}}
\label{appx:sa-mgda,weakmvi,inexact}
We first extend Theorem~\ref{thm:bppm,weakmvi} 
of the (exact) BPP method
to its inexact variant that approximately computes 
the $h$-resolvent in BPP.
\begin{lemma}
\label{lem:inexactBPP}
Let $\{\x_k\}$ be generated by an inexact BPP,
and $\x_k^*:=\R\x_{k-1}$ 
be an exactly updated point 
from $\x_{k-1}$,
where $\x_k \neq \x_k^*$ in general.
Then, under the conditions in Theorem~\ref{thm:bppm,weakmvi},
the sequence $\{\x_k\}$ satisfies, for $k\ge1$
and for any $\x_*\in X_*^\rho(\M)$,
\begin{align*}
\min_{i=1,\ldots,k}D_h(\x_i^*,\x_{i-1}) \le &\frac{1}{(1-\rho L_h)k}\Bigg(D_h(\x_*,\x_0)\\
&+ \sum_{i=1}^{k}\left(\frac{L_h}{2}||\x_{i}^*-\x_{i}||^2 + L_h||\x_{i}^*-\x_{i}||\cdot||\x_*-\x_{i}^*||\right)\Bigg).
\end{align*}
\end{lemma}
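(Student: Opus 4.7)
The plan is to mimic the proof of Theorem~\ref{thm:bppm,weakmvi}, but carefully track the discrepancy introduced by $\x_i \ne \x_i^*$. First, I would apply Lemma~\ref{lem:expansive,weakmvi} at $\x = \x_{i-1}$, so that $\R\x_{i-1} = \x_i^*$ yields the one-step inequality
\[
(1-\rho L_h)\,D_h(\x_i^*,\x_{i-1}) \;\le\; D_h(\x_*,\x_{i-1}) - D_h(\x_*,\x_i^*).
\]
Summing from $i=1$ to $k$, the right-hand side telescopes \emph{only partially}, because the BPP-style telescoping uses $D_h(\x_*,\x_i)$ (the actual iterate) whereas the inequality produces $D_h(\x_*,\x_i^*)$. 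Thus the sum becomes
\[
D_h(\x_*,\x_0) - D_h(\x_*,\x_k^*) + \sum_{i=1}^{k-1}\bigl(D_h(\x_*,\x_i) - D_h(\x_*,\x_i^*)\bigr),
\]
and dropping the nonnegative $D_h(\x_*,\x_k^*)$ leaves the inexactness error $D_h(\x_*,\x_i) - D_h(\x_*,\x_i^*)$ to be estimated.

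The key step is to bound this error. Expanding both Bregman distances and adding/subtracting $\inprod{\nabla h(\x_i)}{\x_*-\x_i^*}$, a direct calculation yields
\[
D_h(\x_*,\x_i) - D_h(\x_*,\x_i^*) \;=\; D_h(\x_i^*,\x_i) + \inprod{\nabla h(\x_i^*)-\nabla h(\x_i)}{\x_*-\x_i^*}.
\]
Now I would invoke the $L_h$-smoothness of $h$ twice: once to get $D_h(\x_i^*,\x_i)\le \tfrac{L_h}{2}\|\x_i^*-\x_i\|^2$, and once (with Cauchy--Schwarz) to get $\inprod{\nabla h(\x_i^*)-\nabla h(\x_i)}{\x_*-\x_i^*}\le L_h\|\x_i^*-\x_i\|\cdot\|\x_*-\x_i^*\|$. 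This gives exactly the error term appearing in the statement.

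Putting the pieces together, dividing by $(1-\rho L_h)k$, and using $\min_i D_h(\x_i^*,\x_{i-1}) \le \frac{1}{k}\sum_i D_h(\x_i^*,\x_{i-1})$, along with the fact that extending the error sum from $i=1,\ldots,k-1$ to $i=1,\ldots,k$ only enlarges the bound (since the added term is nonnegative), produces the claimed inequality. The main obstacle is the clean algebraic splitting of $D_h(\x_*,\x_i) - D_h(\x_*,\x_i^*)$ into a ``self-distance'' piece and a ``gradient-mismatch'' piece; once this decomposition is in place, the rest is smoothness plus Cauchy--Schwarz. Nothing else requires care, and the $\rho=0$ case recovers the standard inexact BPP telescoping.
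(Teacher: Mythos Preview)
Your proposal is correct and follows essentially the same approach as the paper: the one-step inequality from Lemma~\ref{lem:expansive,weakmvi}, the identical decomposition $D_h(\x_*,\x_i)-D_h(\x_*,\x_i^*)=D_h(\x_i^*,\x_i)+\inprod{\nabla h(\x_i^*)-\nabla h(\x_i)}{\x_*-\x_i^*}$, and the same smoothness/Cauchy--Schwarz bounds. The only cosmetic difference is that the paper inserts the error term per step and then telescopes (so the error sum naturally runs to $k$), whereas you sum first and then handle the telescoping defect (yielding an error sum to $k-1$, which you then extend to $k$); both routes give the stated bound.
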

\begin{proof}
	Since $\nabla h(\x_{i-1}) - \nabla h (\x_{i}^*)\in \M\x_{i}^*$, the weak MVI condition implies
	\begin{align*}
	0\le& \langle \nabla h(\x_{i-1}) - \nabla h (\x_{i}^*), \x_{i}^*-\x_*\rangle + \frac{\rho}{2}||\nabla h(\x_{i-1})-\nabla h(\x_{i}^*)||^2\\
	=& D_h(\x_*,\x_{i-1}) - D_h(\x_*,\x_{i}^*) - D_h(\x_{i}^*,\x_{i-1}) + \frac{\rho}{2}||\nabla h(\x_{i-1})-\nabla h(\x_{i}^*)||^2\\
	\le& D_h(\x_*,\x_{i-1}) - D_h(\x_*,\x_{i}^*) - (1-\rho L_h)D_h(\x_{i}^*,\x_{i-1})\\
	=& D_h(\x_*,\x_{i-1}) - D_h(\x_*,\x_{i})\\
	&  + (D_h(\x_*,\x_{i})-D_h(\x_*,\x_{i}^*)) - (1-\rho L_h)D_h(\x_{i}^*,\x_{i-1}).
	\end{align*}
	The term $D_h(\x_*,\x_{i})-D_h(\x_*,\x_{i}^*)$ can be further bounded as
	\begin{align*}
	D_h(\x_*,\x_{i})-D_h(\x_*,\x_{i}^*)
	=& h(\x_{i}^*) - h(\x_{i}) - \langle \nabla h (\x_{i}), \x_*-\x_{i}\rangle
	+\langle \nabla h(\x_{i}^*), \x_*-\x_{i}^*\rangle\\
	=& h(\x_{i}^*) - h(\x_{i}) - \langle \nabla h (\x_{i}), \x_{i}^*-\x_{i}\rangle\\
	&+ \langle \nabla h(\x_{i}^*)-\nabla h (\x_{i}), \x_*-\x_{i}^* \rangle\\
	\le& \frac{L_h}{2}||\x_{i}^*-\x_{i}||^2 + L_h||\x_{i}^*-\x_{i}||\cdot||\x_*-\x_{i}^*||
	.\end{align*}
	Therefore, we get
	\begin{align*}
	(1-\rho L_h)D_h(\x_{i}^*,\x_{i-1}) \le& D_h(\x_*,\x_{i-1}) - D_h(\x_*,\x_{i})\\
	&+ \frac{L_h}{2}||\x_{i}^*-\x_{i}||^2 + L_h||\x_{i}^*-\x_{i}||\cdot||\x_*-\x_{i}^*||.
	\end{align*}
	Then the result follows directly by summing over the inequalities for all $i=1,\ldots,k$ and dividing both sides by $(1-\rho L_h)k$.
\end{proof}

This lemma reduces to Theorem~\ref{thm:bppm,weakmvi}, 
when $\x_k^* = \x_k$
for all $k$.
Under the bounded domain assumption, 
the inequality in Lemma~\ref{lem:inexactBPP} is
further bounded as
\begin{align*}
\min_{i=1,\ldots,k}D_h(\x_i^*,\x_{i-1}) \le \frac{1}{(1-\rho L_h)k}\left(D_h(\x_*,\x_0) + \sum_{i=1}^{k}\frac{3\Omega L_h}{2}||\x_{i}^*-\x_{i}||\right).
\end{align*}

The sequence $\{(\u_i,\vv_i)\}_{i\ge 0}$ of SA-MGDA (with a finite $J$), an instance of the inexact BPP,
satisfies $\u_{i}=\u_{i}^*$,
so $||\x_{i}^*-\x_{i}||=||\vv_{i}^*-\vv_{i}||$.
Since the function except $g$ in the maximization problem with respect to $\vv$ \eqref{eq:uv} is $\left(\frac{1}{\tau}-2L_{\vv\vv}\right)$-strongly concave and $\left(\frac{1}{\tau}+2L_{\vv\vv}\right)$-smooth,
$J$ number of (inner) proximal gradient ascent steps
satisfy
$||\vv_{i}^*-\vv_{i}||\le \Omega\exp{-\frac{\frac{1}{\tau}-2L_{\vv\vv}}{2\left(\frac{1}{\tau}+2L_{\vv\vv}\right)}J}$ (by Theorem~10.29 of \cite{beck:17:fom}) and $L_h = \frac{1}{\tau} + L$,
which concludes the proof.

\subsection{Proof of Theorem~\ref{thm:sa-mgda,strongmvi,inexact}}
\label{appx:sa-mgda,strongmvi,inexact}
We first extend Theorem~\ref{thm:bppm,str,conv} 
of the (exact) BPP method 
to its inexact variant.

\begin{lemma}
\label{lem:inexactBPP,strongmvi}
Let $\{\x_k\}$ be generated by an inexact BPP,
and $\x_k^*:=\R\x_{k-1}$ 
be an exactly updated point 
from $\x_{k-1}$,
where $\x_k \neq \x_k^*$ in general.
Then, under the conditions in Theorem~\ref{thm:bppm,str,conv},
the sequence $\{\x_k\}$ satisfies, for $k\ge1$
and for any $\x_*\in S_*^{\mu}(\M)$,
\begin{align*}
    D_h(\x_*,\x_k)\le &\left(\frac{2\mu}{L_h}+1\right)^{-k} D_h(\x_*,\x_0)\\
    &+ \sum_{i=1}^k\left(\frac{2\mu}{L_h}+1\right)^{-i+1}\left(\frac{L_h}{2}||\x_i^*-\x_i||^2 + L_h||\x_i^*-\x_i||\cdot ||\x_*-\x_i^*||\right).
\end{align*}
\end{lemma}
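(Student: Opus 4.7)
The plan is to mimic the proof of Theorem~\ref{thm:bppm,str,conv} for the exact iterate $\x_i^*=\R\x_{i-1}$, then pay an additive penalty when passing from $\x_i^*$ to the actual inexact iterate $\x_i$ using the same $L_h$-smoothness estimate that appears in the proof of Lemma~\ref{lem:inexactBPP}, and finally unroll the resulting perturbed linear recursion.

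\textbf{Step 1 (exact contraction on $\x_i^*$).} Since $\x_i^*=\R\x_{i-1}$ we have $\nabla h(\x_{i-1})-\nabla h(\x_i^*)\in\M\x_i^*$. Assumption~\ref{assum:strongmvi} combined with the standard three-point identity gives
\begin{align*}
\mu\|\x_*-\x_i^*\|^2 &\le \langle \nabla h(\x_{i-1})-\nabla h(\x_i^*),\,\x_i^*-\x_*\rangle \\
&= -D_h(\x_i^*,\x_{i-1}) + D_h(\x_*,\x_{i-1}) - D_h(\x_*,\x_i^*).
\end{align*}
The $L_h$-smoothness of $h$ yields $\|\x_*-\x_i^*\|^2\ge \tfrac{2}{L_h}D_h(\x_*,\x_i^*)$, and after dropping the nonnegative term $D_h(\x_i^*,\x_{i-1})$ this becomes
\[
\left(\tfrac{2\mu}{L_h}+1\right)D_h(\x_*,\x_i^*)\le D_h(\x_*,\x_{i-1}).
\]

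\textbf{Step 2 (inexactness penalty).} The computation of $D_h(\x_*,\x_i)-D_h(\x_*,\x_i^*)$ from the proof of Lemma~\ref{lem:inexactBPP} carries over verbatim (it uses only the definition of the Bregman distance and $L_h$-smoothness of $h$ together with Cauchy--Schwarz), giving
\[
D_h(\x_*,\x_i) \le D_h(\x_*,\x_i^*) + \tfrac{L_h}{2}\|\x_i^*-\x_i\|^2 + L_h\|\x_i^*-\x_i\|\cdot\|\x_*-\x_i^*\|.
\]

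\textbf{Step 3 (unroll the perturbed recursion).} Writing $c:=1+\tfrac{2\mu}{L_h}$ and $\epsilon_i:=\tfrac{L_h}{2}\|\x_i^*-\x_i\|^2+L_h\|\x_i^*-\x_i\|\cdot\|\x_*-\x_i^*\|$, Steps~1 and 2 combine into the one-step inequality
\[
D_h(\x_*,\x_i)\le c^{-1}D_h(\x_*,\x_{i-1})+\epsilon_i.
\]
A routine induction on $i$ (or equivalently, summing the resulting geometric series) produces the advertised bound, with the factor $c^{-k}$ multiplying the initial Bregman distance and appropriate powers of $c^{-1}$ weighting each $\epsilon_i$.

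The only genuinely technical ingredients (the three-point identity manipulation and the $L_h$-smoothness comparison between $D_h(\x_*,\x_i)$ and $D_h(\x_*,\x_i^*)$) have already been done in Theorem~\ref{thm:bppm,str,conv} and Lemma~\ref{lem:inexactBPP}, so I expect no substantive obstacle. The main bit of care is the bookkeeping in Step~3 when tracking the exponent of $c$ attached to each $\epsilon_i$; in the downstream application to Theorem~\ref{thm:sa-mgda,strongmvi,inexact} the individual $\epsilon_i$ are bounded uniformly (using the bounded-domain hypothesis to control $\|\x_*-\x_i^*\|$ and Beck's Theorem~10.29 to control $\|\x_i^*-\x_i\|$ via the number of inner proximal-gradient ascent steps), so the sum collapses cleanly.
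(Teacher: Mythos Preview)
Your proposal is correct and follows essentially the same route as the paper: the paper applies the strong MVI inequality to $\x_i^*=\R\x_{i-1}$ and the three-point identity to get $\bigl(\tfrac{2\mu}{L_h}+1\bigr)D_h(\x_*,\x_i^*)\le D_h(\x_*,\x_{i-1})$, then bounds $D_h(\x_*,\x_i)-D_h(\x_*,\x_i^*)$ by the same $L_h$-smoothness estimate you cite from Lemma~\ref{lem:inexactBPP}, and recursively unrolls the resulting inequality. Your flag about the exponent bookkeeping in Step~3 is apt: the straightforward unrolling of $D_h(\x_*,\x_i)\le c^{-1}D_h(\x_*,\x_{i-1})+\epsilon_i$ gives weight $c^{-(k-i)}$ to $\epsilon_i$, whereas the statement writes $c^{-i+1}$; the paper does not spell this out either (it just says ``recursively applying''), and since in the downstream Theorem~\ref{thm:sa-mgda,strongmvi,inexact} the $\epsilon_i$ are bounded uniformly, the discrepancy is immaterial there.
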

\begin{proof}
Since $\nabla h(\x_{i-1})-\nabla h (\x_i^*)\in \M\x_i^*$, the strong MVI condition implies
\begin{align*}
    \mu||\x_*-\x_i^*||^2 &\le \langle \nabla h(\x_{i-1})-\nabla h (\x_i^*), \x_i^*-\x_*\rangle\\
    &= D_h(\x_*,\x_{i-1})-D_h(\x_*,\x_i^*)-D_h(\x_i^*,\x_{i-1}).
\end{align*}
Since $D_h(\x_*,\x_i^*)\le \frac{L_h}{2}||\x_*-\x_i^*||^2$, we have
\begin{align*}
    \left(\frac{2\mu}{L_h}+1\right)D_h(\x_*,\x_i^*)\le D_h(\x_*,\x_{i-1}) - D_h(\x_i^*,\x_{i-1})\le D_h(\x_*,\x_{i-1}).
\end{align*}
Therefore,
\begin{align*}
    D_h(\x_*,\x_i)\le& \left(\frac{2\mu}{L_h}+1\right)^{-1} D_h(\x_*,\x_{i-1}) + (D_h(\x_*,\x_i)-D_h(\x_*,\x_i^*))\\
    =& \left(\frac{2\mu}{L_h}+1\right)^{-1} D_h(\x_*,\x_{i-1})\\
    &+ (D_h(\x_i^*,\x_i)-\langle\nabla h(\x_i) - \nabla h(\x_i^*), \x_i^* - \x_*\rangle)\\
    \le& \left(\frac{2\mu}{L_h}+1\right)^{-1} D_h(\x_*,\x_{i-1})\\
    &+ \left(\frac{L_h}{2}||\x_i^*-\x_i||^2 + L_h||\x_i^*-\x_i||\cdot ||\x_*-\x_i^*||\right).
\end{align*}
Then the result follows directly by recursively applying the inequalities for all $i=1,\ldots,k$.
\end{proof}

The 
rest
of the proof is similar to the proof of Theorem~\ref{thm:sa-mgda,weakmvi,inexact} in Appendix~\ref{appx:sa-mgda,weakmvi,inexact}.

\comment{
\subsection{Implicit regularization of the BPP}
\label{appx:reg}

The Yosida approximation of the $h$-resolvent $\R$ is defined as
$\M_h := \nabla h - \nabla h \R$~\cite{borwein:11:aco}. 
By definition, the BPP method on $\M$ 
is equivalent to a mirror descent method~\cite{nemirovski:83}
on $\M_h$,
\begin{align*}
\x_{k+1} = \nabla h^*(\nabla h - \M_h)\x_k
,\end{align*}
where 
$h^*(\y) := \sup_{\x\in\reals^d}\{\inprod{\y}{\x} - h(\x)\}$ 
is the conjugate function of $h$. 
Under Assumption~\ref{assum:M,weak},
the solution sets of $\M$ and $\M_h$ are equivalent
for a $\mu_h$-strongly convex Legendre function $h$ with $\mu_h>\gamma$.
However, $\M_h$ satisfies the following stronger condition,
named $\R$-weak MVI condition,
analogous to the $\R$-monotonicity in~\cite{borwein:11:aco}.
\begin{lemma}
\label{lem:yosida}
Let $h$ be a Legendre function.
The operator $\M$ satisfies Assumption~\ref{assum:weakmvi} 
for some $\rho\ge0$
if and only if 
there exists a solution $\x_* \in X_*(\M)$ such that
\begin{align*}
\inprod{\R\x - \x_*}{\w_h} \ge - \frac{\rho}{2}||\w_h||^2,
\quad \forall(\x,\w_h)\in\gra\M_h
\end{align*}
\end{lemma}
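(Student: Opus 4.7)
The plan is to exploit the key identity that, by definition of the $h$-resolvent, $\nabla h(\x) - \nabla h(\R\x) \in \M(\R\x)$ for every $\x$ where $\R\x$ is defined. Rewriting this in terms of the Yosida approximation $\M_h = \nabla h - \nabla h\circ \R$, I get that $(\x,\w_h)\in \gra \M_h$ is essentially the same data as $(\R\x,\w_h)\in\gra\M$, namely $\w_h = \nabla h(\x) - \nabla h(\R\x)$. This bijective correspondence between points of $\gra\M_h$ and (a subset of) points of $\gra\M$ will drive both directions of the equivalence.

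For the forward direction I will assume Assumption~\ref{assum:weakmvi} and pick any $(\x,\w_h)\in\gra\M_h$. The identity above gives $(\R\x,\w_h)\in\gra\M$, so the weak MVI applied to this pair immediately yields $\inprod{\R\x - \x_*}{\w_h} \ge -\tfrac{\rho}{2}\|\w_h\|^2$, which is exactly the $\R$-weak MVI condition.

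For the converse I will start from any $(\y,\w)\in\gra\M$ and construct a preimage $\x$ under $\R$ that realizes $\w$ as $\M_h \x$. The natural choice is $\x := (\nabla h)^{-1}\bigl(\nabla h(\y) + \w\bigr)$, using that $\nabla h$ is a bijection from $\inter\dom h$ onto $\inter\dom h^{*}$ for a Legendre $h$. By construction $\nabla h(\x) - \nabla h(\y) = \w \in \M\y$, which means $\y \in (\nabla h + \M)^{-1}(\nabla h(\x)) = \R\x$; by single-valuedness of $\R$ (Lemma~\ref{lem:single}) this forces $\y = \R\x$ and hence $\M_h \x = \w$. Then the assumed $\R$-weak MVI inequality at $(\x,\w)$ reads $\inprod{\R\x - \x_*}{\w}\ge -\tfrac{\rho}{2}\|\w\|^2$, and substituting $\R\x = \y$ recovers Assumption~\ref{assum:weakmvi} for $\M$.

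The only subtlety, and what I expect to be the main obstacle, is justifying that the element $\x$ constructed in the converse direction actually exists, i.e.\ that $\nabla h(\y) + \w$ lies in $\ran \nabla h = \inter \dom h^{*}$. For the Legendre functions used throughout the paper (strongly convex, hence super-coercive with $\dom h^{*} = \reals^d$) this is automatic, so I will invoke that regime; otherwise one would restrict the statement to pairs $(\y,\w)\in\gra\M$ with $\nabla h(\y)+\w\in\inter\dom h^{*}$. The remainder is routine algebra via the definition of $\M_h$ and the single-valuedness already established.
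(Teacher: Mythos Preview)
The paper does not supply a proof of this lemma; it is merely stated (and the entire subsection containing it is actually wrapped in a \verb|\comment{...}| block in the source), so there is nothing to compare against directly. Your argument is the natural one: the bijection between $(\x,\w_h)\in\gra\M_h$ and $(\R\x,\w_h)\in\gra\M$, coming from the defining relation $\nabla h(\x)-\nabla h(\R\x)\in\M(\R\x)$, makes the two inequalities literally the same statement once one identifies $\y=\R\x$ and $\w=\w_h$.

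The only genuine technical content is in the converse direction, and you have located it correctly. Two assumptions are tacitly in force but not written in the lemma's hypothesis: (i) single-valuedness of $\R$, so that writing ``$\R\x$'' in the displayed inequality is meaningful and so that $\y\in\R\x$ forces $\y=\R\x$; and (ii) surjectivity of $\nabla h$ (equivalently $\dom h^{*}=\reals^d$), so that the preimage $\x=(\nabla h)^{-1}(\nabla h(\y)+\w)$ exists for every $(\y,\w)\in\gra\M$. In the paper's working regime---$h$ is $\mu_h$-strongly convex with $\mu_h>\gamma$ and $\M$ is $\gamma$-weakly monotone---both are available: (i) is Lemma~\ref{lem:single}, and (ii) follows from strong convexity (supercoercivity) of $h$. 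For a bare Legendre $h$ with no further structure the ``only if'' direction still goes through, but the ``if'' direction need not, exactly as you note. So your proof is correct in the setting the paper actually uses, and your caveat about the general Legendre case is warranted rather than a gap.
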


For $h=\frac{L_h}{2}||\cdot||^2$,
the inequality reduces to
$ 
\inprod{\x - \x_*}{\w_h} \ge 
\big(\frac{1}{L_h} - \frac{\rho}{2}\big)||\w_h||^2
$,
which is stronger than the MVI condition if $\rho L_h < 2$.
For the SA-MGDA,
consider a point $\x$ near $\x_*$,
where the function is well approximated by a quadratic
$\phi(\x) \approx \phi(\x_*) 
+ \frac{1}{2}\inprod{\x-\x_*}{\nabla^2 \phi(\x_*)(\x-\x_*)}$.
Then, 
the inequality becomes (see Appendix~\ref{appx:approx})
\begin{align*}
\inprod{\x - \x_*}{\w_h} 
\gtrapprox
\Inprod{\w_h}{\Big(\big(\frac{1}{\tau}\I-\nabla^2\phi(\x_*)\big)^{-1} 
\!\!\!\!- \frac{\rho}{2}\I\Big)\w_h}
,\end{align*}
where the regularization implicitly exploits the Hessian information
of the problem.
This condition
is stronger than the MVI condition if $\frac{1}{\tau}>L$ and
$\rho\big(\frac{1}{\tau}+L\big)<2$.

\subsection{ 
	Approximation of the ${\bf R}$-weak MVI condition for the SA-MGDA}
\label{appx:approx}

For a point $\x$ near $\x_*$,
where the function is well approximated by a quadratic
$\phi(\x) \approx \phi(\x_*) + \frac{1}{2}\inprod{\x - \x_*}{\nabla^2\phi(\x_*)(\x - \x_*)}$,
we have the following approximations for $h$~\eqref{eq:h,saag}:
\begin{align*}
h(\x) &= \frac{1}{2\tau}||\x||^2 - \phi(\x)
\approx \frac{1}{2\tau}||\x||^2 - \phi(\x_*) 
- \frac{1}{2}\inprod{\x - \x_*}{\nabla^2\phi(\x_*)(\x - \x_*)}, \\
\nabla h(\x) &\approx \frac{1}{\tau}\x - \nabla^2\phi(\x_*)(\x - \x_*).
\end{align*}
Then, for any $(\x,\w_h)\in\gra\M_h$,
the resolvent $\R$ satisfies
\begin{align*}
& \nabla h(\R\x) \approx \nabla h(\x) - \w_h, \\
\Leftrightarrow \quad & \frac{1}{\tau}\R\x - \nabla^2\phi(\x_*)(\R\x - \x_*)
\approx \frac{1}{\tau}\x - \nabla^2\phi(\x_*)(\x - \x_*) - \w_h, \\
\Leftrightarrow \quad & \R\x \approx \x 
- \left(\frac{1}{\tau}\I - \nabla^2\phi(\x_*)\right)^{-1}\!\!\!\w_h
,\end{align*}
since $\frac{1}{\tau}\I - \nabla^2\phi(\x_*)$ is positive definite
for $\frac{1}{\tau}>L$.
Therefore, the inequality in Lemma~\ref{lem:yosida} 
can be approximated as
\begin{align*}
\Inprod{\x - \left(\frac{1}{\tau}\I - \nabla^2\phi(\x_*)\right)^{-1}\!\!\!\w_h - \x_*}{\w_h} \gtrapprox - \frac{\rho}{2}||\w_h||^2
.\end{align*}
}


\section{Proof of Theorem~\ref{thm:sa-mgda,line,weakmvi}}
\label{appx:sa-mgda,line,weakmvi}
We first show that $\tau_k$ is lower bounded by $ \frac{\delta}{L+\hat{L}}$ for all $k\ge 0$. Suppose that $\tau_k \ge \frac{\delta}{L+\hat{L}}$ and 
$\tau_{k+1} := \delta^{i_k}\tau_k < \frac{\delta}{L+\hat{L}}$ for some $k\ge 0$. 
Let $\tau_{k+1}':= \delta^{i_k-1}\tau_k = \frac{\tau_{k+1}}{\delta}<\frac{1}{L+\hat{L}}$
and define a function $h_{k+1}':=\frac{1}{2\tau_{k+1}'}\|\cdot\|^2-\phi$,
which is convex and $\Big(\frac{1}{\tau_{k+1}'} + L\Big)$-smooth.
Since $\frac{1}{\tau_{k+1}'}-L>\hat{L}$,
$\x_{k+1}':=\R_{\M}^{h_{k+1}'}(\x_k)$ 
exists, by Lemma~\ref{lem:single}.
In addition, 
%
since $\frac{1}{\tau_{k+1}'}+L < \frac{2}{\tau_{k+1}'}$, the inequality 
	$ 
	\frac{\tau_{k+1}'}{4}||\nabla h_{k+1}'(\x_{k+1}') - \nabla h_{k+1}'(\x_k)||^2
	\le D_{h_{k+1}'}(\x_{k+1}',\x_k)
	$ 
	holds.
This contradicts to the definition of $i_k$. Therefore, $\tau_{k+1}\ge \frac{\delta}{L+\hat{L}}$ for all $k\ge 0$.

Let $\tilde{L} := \frac{2(L + \hat{L})}{\delta}$.
By the definition of $\x_{i+1}$ and the proof of Lemma~\ref{lem:expansive,weakmvi},  
we have
\begin{align*}
0 &\le -D_{h_{i+1}}(\x_{i+1},\x_i) + D_{h_{i+1}}(\x_*,\x_i) - D_{h_{i+1}}(\x_*,\x_{i+1}) \\
&\quad + \frac{\rho}{2}||\nabla h_{i+1}(\x_i) - \nabla h_{i+1}(\x_{i+1})||^2 \nonumber\\
&\le D_{h_{i+1}}(\x_*,\x_i) - D_{h_{i+1}}(\x_*,\x_{i+1}) 
- (1-\rho\tilde{L})D_{h_{i+1}}(\x_{i+1},\x_i) 
\nonumber 
\end{align*}
for all $i\ge 0$.

By summing over the above inequality,
we get
\begingroup
\allowdisplaybreaks
\begin{align*}
&\quad \sum_{i=1}^k (1- 
\rho\tilde{L})
D_{h_i}(\x_i,\x_{i-1})  \\
& \le \sum_{i=1}^k \left(D_{h_i}(\x_*,\x_{i-1}) - D_{h_i}(\x_*,\x_i) \right)\\
&= D_{h_1}(\x_*,\x_0) - D_{h_k}(\x_*,\x_k) +  \sum_{i=1}^{k-1} \left(D_{h_{i+1}}(\x_*,\x_i) - D_{h_i}(\x_*,\x_i) \right) \\
&= D_{h_1}(\x_*,\x_0) - D_{h_k}(\x_*,\x_k) +  \sum_{i=1}^{k-1} \left(\frac{1}{2\tau_{i+1}}-\frac{1}{2\tau_{i}}\right)\|\x_*-\x_i\|^2 \\
&\le D_{h_1}(\x_*,\x_0) - D_{h_k}(\x_*,\x_k) +  \sum_{i=1}^{k-1} \left(\frac{1}{2\tau_{i+1}}-\frac{1}{2\tau_{i}}\right) C^2 \\
&\le D_{h_1}(\x_*,\x_0) - D_{h_k}(\x_*,\x_k) +  \frac{1}{2\tau_k} C^2 \\
&\le D_{h_1}(\x_*,\x_0) - D_{h_k}(\x_*,\x_k) +  \frac{\tilde{L}}{4}C^2\\ 
&\le D_{h_1}(\x_*,\x_0) +  D_{\phi}(\x_*,\x_k) +  \frac{\tilde{L}}{4}C^2 \\
&\le D_{h_1}(\x_*,\x_0) + \frac{L}{2}C^2 + \frac{\tilde{L}}{4}C^2 \\
&\le D_{h_1}(\x_*,\x_0) + \frac{\tilde{L}}{2}C^2 
,\end{align*}
\endgroup
which uses $D_{h_k}(\x_*,\x_k) = \frac{1}{2\tau_k}\|\x_*-\x_k\|^2 - D_{\phi}(\x_*,\x_k)$.
Therefore, the result in Theorem~\ref{thm:sa-mgda,line,weakmvi} directly 
follows 
under
the condition that
$\rho\tilde{L}<1$, which is equivalent to
$\delta > 2\rho(L+\hat{L})$.
We also need $\rho< \frac{1}{2(L+\hat{L})}$, so that 
$\delta\in\big(2\rho(L+\hat{L}),1\big)$ exists.
\qed


\section{Details of the structure of the neural network}
\label{appx:network}

\begin{table*}[h!]
	\centering
	\begin{tabular}{l l}
		\toprule[1pt] 
		Layer Type & Shape \\
		\midrule[1pt] 
		Convolution $+$ tanh & 3 $\times$ 3 $\times$ 5 \\
		Max Pooling & 2 $\times$ 2 \\
		Convolution & 3 $\times$ 3 $\times$ 10 \\
		Max Pooling & 2 $\times$ 2 \\
		Fully Connected $+$ tanh & 250 \\
		Fully Connected $+$ tanh & 100 \\
		Softmax & 3 \\
		\bottomrule[1pt] 
	\end{tabular}
	\caption{Details of the Structure of the Neural Network}
\end{table*}

\section*{Acknowledgments}
We thank the anonymous referees for their careful reading
and their useful comments and suggestions.

\bibliographystyle{siamplain}
\bibliography{master,mastersub}

\end{document}